\documentclass[reqno, 10.5pt]{amsart}

\usepackage{amsfonts,amsmath,amssymb,amsthm,amscd,enumerate,hyperref,comment}
\usepackage[dvipsnames]{xcolor}

\newtheorem{thm}{Theorem}[section]

\newtheorem{lem}{Lemma}[section]
\newtheorem{cor}{Corollary}[section]

\theoremstyle{remark}
\newtheorem{rmk}{Remark}[section]

\numberwithin{equation}{section}

\theoremstyle{definition}
\newtheorem{defn}{Definition}[section]


\newcommand{\C}{\ensuremath{\mathbb{C}}}

\newcommand{\R}{\ensuremath{\mathbb{R}}}

\newcommand{\rS}{\ensuremath{\mathbb{S}}}

\newcommand{\na}{\nabla}
\newcommand{\la}{\langle}

\newcommand{\ra}{\rangle}

\newcommand{\vphi}{\varphi}


\newcommand{\tr}{\operatorname{tr}}



\newcommand{\KN}{\mathbin{\bigcirc\mspace{-15mu}\wedge\mspace{3mu}}}

\DeclareMathOperator{\Ric}{Ric}

\DeclareMathOperator{\mC}{\mathcal{C}}

\begin{document}

\title[Curvature pinching of asymptotically conical Ricci expanders] {Curvature pinching of asymptotically conical gradient expanding Ricci solitons}
\author[Huai-Dong Cao and Junming Xie]{Huai-Dong Cao and Junming Xie}

\address{Department of Mathematics, Lehigh University, Bethlehem, PA 18015}
\email{huc2@lehigh.edu}

\address{Department of Mathematics, Rutgers University, Piscataway, NJ 08854}
\email{junming.xie@rutgers.edu}

\begin{abstract}

	In this paper, we investigate curvature pinching phenomena in complete non-compact {\it asymptotically conical} gradient expanding Ricci solitons and establish several Hamilton--Ivey type curvature pinching estimates. These results are parallel to those known for shrinking and steady Ricci solitons. In particular, we prove a three-dimensional Hamilton--Ivey type curvature pinching theorem: any three-dimensional non-compact gradient Ricci expander, which is asymptotic to a cone with positive scalar curvature, must have positive sectional curvature. Furthermore, we formulate a general method and apply it to obtain analogues of several additional known generalized Hamilton--Ivey type curvature pinching results for ancient solutions. Among these is a curvature pinching estimate for four-dimensional asymptotically conical Ricci expanders with uniformly positive isotropic curvature, analogous to a result for four-dimensional gradient steady solitons due to Brendle \cite{Brendle:14}.
	
\end{abstract}


\maketitle


\section{Introduction}

A complete Riemannian manifold $(M^n, g)$ is said to be a {\it gradient Ricci soliton} if there exists
a smooth function $f$ on $M^n$ such that the Ricci tensor $Rc$ of the metric $g$ satisfies the equation
\begin{equation} \label{eq:Riccisoliton}
	Rc + \nabla^2 f=\rho g
\end{equation}
for some constant $\rho \in{\mathbb R}$, where $\na^2 f$ denotes the Hessian of $f$. The Ricci soliton is said to be expanding, or steady, or shrinking if $\rho <0$, or  $\rho =0$, or $\rho>0$. 
The function $f$ is called a {\it potential function} of the gradient Ricci soliton.

Gradient Ricci solitons generate self-similar solutions to Hamilton's Ricci flow \cite{Ha:82} and play an important role in the study of the formation of singularities \cite{Ha:95F, Perelman:03}.  In particular, shrinking and steady solitons often arise as Type I and, respectively, Type II singularity models \cite{Ha:95F, Naber:10, Enders-Mueller-Topping:11, Cao:97} in the Ricci flow, while expanding solitons may arise as Type III singularity models \cite{Cao:97, Chen-Zhu:00} and over which the matrix Li-Yau-Hamilton (LYH) differential Harnack inequality \cite{Ha:93, Cao:92} becomes equality.  

The first examples of gradient expanding Ricci solitons are the one-parameter family of complete, rotationally symmetric, asymptotically conical gradient expanding Ricci solitons on $\R^n$ ($n \geq 3$) with positive (and negative) sectional curvature constructed by Bryant \cite{Bryant}, and the one-parameter family of complete, $U(n)$-invariant, asymptotically conical gradient expanding K\"ahler-Ricci solitons on $\C^n$ with similar curvature behavior constructed by the first author \cite{Cao:97}. The constructions in \cite{Cao:96, Cao:97} were later extended by Feldman-Ilmanen-Knopf \cite{FIK:03} to produce gradient  expanding K\"ahler-Ricci solitons on the complex line bundles $O(-k)$ ($k>n$) over the complex projective space ${\mathbb C}P^n$ ($n\geq 1$), and further generalized by Dancer-Wang  \cite{Dancer-Wang:11}. Additional constructions can be found in \cite{Lauret:01, Baird-Laurent:07, Dancer-Wang:09, Futaki-Wang:11, BDGW:15,  Wink:21, Wink:23, Nienhaus-Wink:24, Chi:24a}. 

Asymptotically conical gradient expanding Ricci solitons have received increasing attention in recent years. Chodosh \cite{Chodosh:14} showed that any gradient expanding Ricci soliton with positive sectional curvature that is asymptotic to a Euclidean cone must be rotationally symmetric.  A similar result for gradient K\"ahler-Ricci expanders was obtained by Chodosh-Fong \cite{Chodosh-Fong:16}. 
Schulze-Simon \cite{Schulze-Simon:13} constructed gradient expanding solitons emerging from the asymptotic cones at infinity of Ricci flow solutions on complete, non-compact, Riemannian manifolds with bounded,  nonnegative curvature operator  and positive asymptotic volume ratio. Deruelle \cite{Deruelle:16} proved that any Riemannian cone whose link is a differentiable sphere with curvature operator $Rm >1$ can be smoothed out by the Ricci flow into a gradient expanding Ricci soliton with nonnegative curvature operator. In the K\"ahler setting, Conlon, Deruelle, and Sun \cite{Conlon-Deruelle:20, Conlon-Deruelle-Sun:24} established the existence and uniqueness of asymptotically conical gradient expanding K\"ahler-Ricci solitons on smooth canonical models of K\"ahler cones. 

More recently, Chan-Lee-Peachey \cite{Chan-Lee-Peachey:24} showed that any metric cone at infinity of a non-collapsed weakly PIC1 manifold is resolved by a gradient expanding Ricci soliton. Bamler-Chen \cite{Bamler-Chen:23} developed a degree theory for 4-dimensional, asymptotically conical expanders, which implies the existence of gradient expanders asymptotic to any cone over $\rS^3$ with nonnegative scalar curvature. Additionally, Chan-Lee \cite{Chan-Lee:25} constructed various examples of asymptotically conical gradient expanders with positive curvature and exotic curvature decay. In particular, \cite{Gianniotis-Schulze:18, Angenent-Knopf:22, Bamler-Chen:23} have proposed that asymptotically conical gradient Ricci expanders may serve to continue Ricci flow past singular time and resolve conical singularities. For other related  developments, see \cite{Chow-Lu-Ni:06, CarNi:09, PRS:11, Bernstein-Mettler:15, Chen-Deruelle:15, CMM:16, Simon-Topping:21, Deruelle-Simon-Schulze:22, Deruelle-Schulze:23,  Leal-Vieira-Zhou:24} and the references therein. 

Curvature estimates for gradient expanding Ricci solitons with $Rc\geq 0$ or scalar curvature $R>0$ 
have also been established in \cite{Cao-Liu:22, Cao-Liu-Xie:23}, mirroring those for shrinking solitons \cite{Munteanu-Wang:15, Munteanu-Wang:17} or steady solitons \cite{Cao-Cui:20, Chan:19, Cao:22}. 
 
Despite the progress described above, a key feature well known for gradient shrinking and steady Ricci solitons has remained absent in the expanding case: Hamilton--Ivey type curvature pinching.  As is well-known, a hallmark of the three-dimensional Ricci flow is the {\it Hamilton--Ivey curvature pinching} \cite{Ha:95F, Ivey:93} (see also \cite[Theorem 2.4.1]{Cao-Zhu:06}), which asserts that when curvature blows up, the positive part blows up at a faster rate than the absolute value of the negative part.  In particular, 3-dimensional singularity models that are shrinking or steady gradient Ricci solitons, or more generally ancient solutions, must have nonnegative sectional curvature. This feature is remarkable:  it enables the application of the powerful Li--Yau--Hamilton differential Harnack inequality and the geometry of nonnegatively curved 3-manifolds to effectively analyze three-dimensional singularity models.  

The Hamilton--Ivey curvature pinching was later extended by B.-L. Chen \cite{ChenBL:09}  to arbitrary 3-dimensional ancient solutions\footnote{Ancient solutions exist for all {\it past} time, up to a final singular time; immortal solutions, starting at some initial time, exist for all {\it future} time.}, which include gradient shrinking and steady solitons as important special cases. It implies that any complete ancient solution in dimension three must have nonnegative sectional curvature. This has played a crucial role in the classifications of 3-dimensional gradient shrinking and steady Ricci solitons \cite{Cao-Chen-Zhu:08, Brendle:13, Lai:24, Lai:25, Lai:25b}, as well as 3D ancient solutions \cite{Brendle:20, Brendle-Dask-Sesum:21}.  

In higher dimensions, various forms of the generalized Hamilton--Ivey curvature pinching have been established for ancient solutions under suitable assumptions, see, e.g., \cite{Zhang:09, Bamler-CR-Wilking:19, Li-Ni:20, Cho-Li:23}. More recently, the authors proved certain curvature pinching properties for $4$-dimensional ancient solutions with positive isotropic curvature (PIC) or half-PIC \cite{Cao-Xie:23, Cao-Xie:25}, leading to partial classifications of $4$-dimensional shrinking and steady gradient Ricci solitons with weakly positive isotropic curvature (WPIC) or half-WPIC. 

However, analogous curvature pinching results for gradient expanding Ricci solitons have remained largely unknown. This is partly because the proofs of Hamilton--Ivey-type estimates for shrinking and steady solitons rely in an essential way on the {\it ancient} nature of these solutions. Since gradient expanders are not ancient solutions but special {\it immortal} solutions, those methods do not apply.  

Motivated in part by the second author's work \cite{Xie:25} on the convexity of mean convex, asymptotically conical self-expanders in mean curvature flow, and in part by the close resemblance between curvature estimates for Ricci expanders with nonnegative curvature \cite{Cao-Liu:22, Cao-Liu-Xie:23} and those for shrinking or steady Ricci solitons \cite{Munteanu-Wang:15, Munteanu-Wang:17, Cao-Cui:20, Chan:19, Cao:22} (see especially the comparison in dimension four given in \cite{Cao:22}), we began to investigate whether analogous Hamilton--Ivey-type curvature pinching properties might hold in the expanding case. By adapting an argument from the second author's recent work \cite{Xie:25} on asymptotically conical mean curvature expanders--inspired in turn by Spruck-Xiao \cite{Spruck-Xiao:20} and Xie-Yu \cite{Xie-Yu:23}--we have indeed established a number of generalized Hamilton--Ivey-type curvature pinching results for non-compact, asymptotically conical, gradient expanding Ricci solitons with positive scalar curvature.

\smallskip
Our first result is a Hamilton--Ivey type curvature pinching for $3$-dimensional complete non-compact, asymptotically conical, gradient Ricci expanders with positive scalar curvature. 

\begin{thm} \label{thm:ac-3D}
	Let $(M^3,g,f)$ be a $3$-dimensional non-compact, asymptotically conical\footnote{For all our results in this paper, except Corollary \ref{cor:ac_UPIC}, it suffices to assume $C^2$-asymptotics in the sense of Definition \ref{defn:conical}(a).} gradient expanding Ricci soliton. Suppose the asymptotic cone has positive scalar curvature. Then, $(M^3,g,f)$ must have positive sectional curvature.
\end{thm}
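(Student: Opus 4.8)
The plan is to set up a tensor maximal-principle argument for the elliptic soliton equation, using the asymptotic cone condition to control the behavior at infinity. Recall that a gradient expanding Ricci soliton with $\rho = -1/2$ (after normalization) satisfies $Rc + \nabla^2 f = -\tfrac12 g$, and that the curvature operator $Rm$ evolves under the soliton structure according to a Lichnerowicz-type equation $\Delta_f Rm = Rm + Rm^2 + Rm^\# $ (schematically), where $\Delta_f = \Delta - \langle \nabla f, \nabla\rangle$ is the drift Laplacian and the quadratic terms are Hamilton's reaction terms. In dimension three one diagonalizes the curvature operator with eigenvalues $\lambda \geq \mu \geq \nu$, so positivity of sectional curvature is exactly $\nu > 0$. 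First I would record the scalar-curvature equation $\Delta_f R = -R - 2|Rc|^2 \leq -R$ (using $\rho=-1/2$ scaled so $R=\lambda\mu+\lambda\nu+\mu\nu$), combined with the hypothesis that the link of the asymptotic cone has positive scalar curvature; by the asymptotics of Deruelle-type solitons and a strong maximum principle / Omori–Yau argument, this already forces $R > 0$ everywhere on $M^3$ (if $R$ attained a nonpositive minimum it would be a strict subsolution, contradiction), giving us the nonnegative-scalar-curvature regime to work in.

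Next I would run the Hamilton-Ivey pinching tensor inequality at the level of the curvature operator. Set, as in Hamilton's original argument, the pinching quantity $X = -\nu$ (or more precisely track $\nu + $ a function of $R$); the key reaction computation shows that on the set where $\nu < 0$ one has, in the support/barrier sense, $\Delta_f \nu \leq \nu^2 + \mu\lambda$ together with $\Delta_f R \leq -R$, and one seeks a function $\Phi$ of $R$ such that $\nu \geq -\Phi(R)$ is preserved. The essential point where the expanding case differs from the shrinking/steady case is that we cannot appeal to the solution being ancient (which in Hamilton-Ivey supplies the needed initial-time pinching as $t\to-\infty$); instead the role of ``initial data'' is played by the asymptotic cone. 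So I would show: (i) near infinity, the asymptotically conical structure with positive scalar curvature on the link forces $\nu/R \to$ a nonnegative limit (the cone over a positively-curved link has nonnegative curvature operator in the relevant sense, and the soliton is $C^2$-asymptotic to it), so the pinching inequality $\nu \geq -\varepsilon R - C$ holds outside a compact set for every $\varepsilon>0$; (ii) then a localized maximum-principle argument using $\Delta_f$, exploiting that $f$ is proper with $|\nabla f|^2 \sim \tfrac14 r^2$ so the drift term $-\langle\nabla f,\nabla\rangle$ points inward and dominates, propagates the pinching from the asymptotic region into any compact set, yielding $\nu \geq 0$ everywhere. Finally, strong maximum principle applied to the lowest eigenvalue $\nu$ (using that $\lambda\mu \geq 0$ once $\nu \geq 0$ and $R>0$) upgrades $\nu \geq 0$ to $\nu > 0$, i.e.\ strictly positive sectional curvature.

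The main obstacle — and the genuinely new ingredient — is step (ii): making the maximum principle work on a noncompact manifold for a quantity that is not a priori bounded and where the equation is only elliptic (not parabolic), without the ancient hypothesis. Concretely, one must rule out the scenario where the pinching ratio degrades in the interior faster than the drift term can correct it. Here I expect to follow the strategy alluded to in the introduction, adapting Xie's argument for mean curvature self-expanders (in the spirit of Spruck–Xiao and Xie–Yu): introduce a well-chosen auxiliary function built from $f$ (for instance a power of $f$ or $e^{-f}$-type weight) as a barrier, so that the quantity $\nu + \Phi(R) + $ (barrier) satisfies a differential inequality with no interior minimum, forcing the infimum to be approached at infinity, where step (i) controls it. Getting the barrier exponents to close — balancing the good drift term $-\tfrac12\langle\nabla f,\nabla\rangle$ against the bad reaction term $\mu\lambda$ and the weight's own Laplacian — is the delicate computation, and it is exactly the place where the asymptotically conical hypothesis (hence precise control of $f$, $\nabla f$, and the curvature decay at infinity) is indispensable.
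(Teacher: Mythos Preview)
Your high-level strategy is correct and matches the paper: establish $R>0$ from the cone hypothesis, control the pinching ratio at infinity via the asymptotic cone, apply an elliptic maximum principle in the interior, then upgrade nonnegativity to strict positivity by the strong maximum principle. Where your proposal diverges is in the execution of the interior step, and here you are making the problem harder than it is.

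The paper does \emph{not} introduce a barrier built from $f$ or $e^{-f}$, and there is no ``delicate balancing'' of drift versus reaction. Instead, one works directly with the ratio $m_1/R$ (your $\nu/R$) and computes, with $F := f - 2\log R$,
\[
\Delta_F\!\left(\frac{m_1}{R}\right) \;\le\; \frac{4}{R^2}\Bigl[(m_1-m_3)m_2^2 + (m_1-m_2)m_3^2\Bigr] \;\le\; 0
\]
in the barrier sense. The right-hand side is manifestly nonpositive with no sign hypothesis on $m_1$; this algebraic identity (coming from $m_1|Rc|^2 - R(m_1^2+m_2m_3)$ after using $\Delta_f R = 2\rho R - 2|Rc|^2$ and $\Delta_f m_1 \le 2\rho m_1 - 2(m_1^2+m_2m_3)$) is the whole point, and it makes $m_1/R$ a global supersolution for $\Delta_F$. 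Calabi's barrier strong maximum principle then gives a clean dichotomy: either the negative infimum is attained in the interior, in which case $m_1/R$ is locally constant and the vanishing of the right-hand side forces $|Rc|=0$ (contradicting $R>0$), or the infimum is approached at infinity, where the cone asymptotics give $m_1/R \to 0$ (since a 3-cone with positive scalar curvature has $m_1\equiv 0$ in the radial direction). No auxiliary weight is needed because the $-2\log R$ shift in the potential already absorbs the cross terms. Your proposed additive pinching $\nu + \Phi(R) + (\text{barrier})$ would require exactly the kind of exponent-matching you worry about, and it is not clear it closes; the multiplicative ratio sidesteps this entirely.

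Two small corrections: in dimension three with curvature-operator eigenvalues $m_1\le m_2\le m_3$ one has $R = 2(m_1+m_2+m_3)$, not $\lambda\mu+\lambda\nu+\mu\nu$; and for the final strict positivity, the paper invokes Hamilton's strong maximum principle for the curvature operator under the associated Ricci flow (parallel null space, local splitting, incompatibility with a non-flat asymptotic cone), rather than a pointwise argument on $\nu$ alone.
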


\begin{rmk}
	Theorem \ref{thm:ac-3D} may be viewed as an analogue of B.-L. Chen's result for three-dimensional ancient solutions to the Ricci flow \cite{ChenBL:09}. Moreover, after our paper was completed, we learned from P.-Y. Chan that a similar result to Theorem \ref{thm:ac-3D} was proved in \cite[Corollary 1.16]{Chan-Ma-Zhang:21} by a different method. 
\end{rmk}

For $n\geq 4$, we have the following Hamilton--Ivey type curvature pinching result for asymptotically conical, locally conformally flat gradient expanding solitons with positive scalar curvature.

\begin{thm} \label{thm:ac_Dflat}
	Let $(M^n,g,f)$, $n\geq 4$, be an $n$-dimensional non-compact, locally conformally flat, asymptotically conical gradient expanding Ricci soliton. Suppose the asymptotic cone has positive scalar curvature. Then, $(M^n,g,f)$ must have positive curvature operator.
\end{thm}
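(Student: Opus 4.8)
The plan is to reduce the assertion, via local conformal flatness, to a pinching inequality for a single curvature function, and then to run a maximum principle in which the asymptotically conical structure---rather than any ancientness, which is unavailable---supplies the needed control at infinity. This follows the scheme of \cite{Xie:25} (building on \cite{Spruck-Xiao:20, Xie-Yu:23}), and the maximum principle is where I expect the real difficulty to lie.

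First, the algebra. Since $W\equiv0$, the curvature tensor is $\Rm=A\KN g$ with $A=\frac{1}{n-2}\big(Rc-\frac{R}{2(n-1)}g\big)$ the Schouten tensor, so the eigenvalues of $\Rm$ on $\Lambda^2$ are the pairwise sums $a_i+a_j$ of the eigenvalues $a_1\le\cdots\le a_n$ of $A$; hence a positive curvature operator is equivalent to $a_1+a_2>0$, i.e.\ to $\nu:=\lambda_{\min}(\Rm)>0$ (and $\nu$ is the least sectional curvature). So it suffices to prove $\nu>0$ on $M^n$. Next, since the Weyl tensor is scale invariant and vanishes on $M$, the asymptotic cone $\mathcal C=C(\Sigma^{n-1})$ is conformally flat; for $n\ge4$ this forces the link $\Sigma$ to be a space form, and the hypothesis that $\mathcal C$ has positive scalar curvature is then exactly that $\Sigma$ has constant curvature above the critical value---equivalently, that $\mathcal C$ has strictly positive curvature operator, $\lambda_{\min}(\Rm_{\mathcal C})\gtrsim\varrho^{-2}$. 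Asymptotic conicality then gives a compact $K\subset M$ with $R>0$ and $\nu\ge c\,d(\cdot,o)^{-2}>0$ on $M\setminus K$, so $\inf_M\nu$ is attained, and we may assume it is $\le0$. A preliminary step is to improve this to $R>0$ on all of $M$: normalizing $\rho=-\tfrac12$, one has $\Delta_fR=-R-2|Rc|^2$ with $\Delta_f:=\Delta-\la\na f,\na\cdot\ra$, and together with $R>0$ near infinity and the strong maximum principle this excludes a nonpositive interior minimum of $R$ unless $(M,g,f)$ is the Gaussian expander on $\R^n$, whose asymptotic cone is flat---contradicting the hypothesis. (Alternatively, $R>0$ follows from monotonicity of $R_{\min}$ along the self-similar Ricci flow $g(t)$, which limits to $\mathcal C$ as $t\to0^+$.)

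Now the core. By the Uhlenbeck trick, on the expander $\Delta_f\Rm=-\Rm-Q(\Rm)$, where $Q(\Rm)=\Rm^2+\Rm^{\#}$ is the Hamilton quadratic (up to a fixed constant); local conformal flatness is preserved, so this descends to a closed equation for $A$, and since $\lambda_{\min}$ is concave in $\Rm$ one gets, in the barrier sense, $\Delta_f\nu\le-\nu-\la Q(\Rm)\omega,\omega\ra$ with $\omega$ a unit lowest eigen-$2$-form, while $\Delta_fR=-R-2|Rc|^2$. The mechanism that replaces ancientness is that the linear zeroth-order terms $-\nu$ and $-R$ carry the same coefficient $2\rho$ and hence cancel in the quotient $\psi:=\nu/R$ (globally defined by the previous step), which satisfies
\[
\Delta_f\psi\;\le\;-\tfrac1R\la Q(\Rm)\omega,\omega\ra\;+\;\tfrac{2\nu|Rc|^2}{R^2}\;-\;\tfrac2R\la\na R,\na\psi\ra .
\]
At an interior minimum of $\psi$ with $\psi\le0$ the gradient term drops; one then needs the surviving quadratic terms to yield a contradiction. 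This is exactly where $W\equiv0$ is essential (and, in dimension three, automatic): for $\Rm=A\KN g$ one has the explicit identity $\la Q(\Rm)\omega,\omega\ra=\nu^2+\sum_{k\ge3}(a_1+a_k)(a_2+a_k)$ (up to the same constant), and one must show that, when $\nu<0$ and $R>0$, this dominates $2\nu|Rc|^2/R$ with room to spare, using $|Rc|^2\ge R^2/n$ and the relation between $|Rc|^2$, $R$ and the $a_i$. One must also make sure the infimum of $\psi$ is not lost at infinity, using $\psi\ge0$ off $K$ and the limit $\lambda_{\min}(\Rm_{\mathcal C})/R_{\mathcal C}>0$ along any diverging near-minimizing sequence.

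\emph{I expect this step to be the main obstacle}, in two ways. Analytically: making the maximum principle rigorous on the non-compact, merely immortal soliton, including handling the degeneracy where the lowest eigenvalue of $\Rm$ is not simple (via the usual device of working with the least eigenvalue, or the elementary symmetric functions of the bottom block, in the viscosity/barrier sense). Algebraically: establishing the pointwise inequality $\nu^2+\sum_{k\ge3}(a_1+a_k)(a_2+a_k)>2\nu|Rc|^2/R$ under $\nu<0<R$, quantitatively enough to absorb lower-order errors. Once $\psi\ge0$, hence $\Rm\ge0$, strictness follows from Hamilton's strong maximum principle for systems: a nontrivial kernel of $\Rm$ would be a parallel subbundle of constant rank, but $\Rm>0$ near infinity forces that rank to be $0$; hence $\Rm>0$ on $M^n$, which by the reduction above is the claim.
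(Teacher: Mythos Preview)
Your overall strategy---reduce to the scalar ratio $\psi=\nu/R$, derive a drift-Laplacian inequality exploiting the cancellation of the $2\rho$ terms, and run a barrier maximum principle with the cone controlling behavior at infinity---matches the paper's. There are, however, two concrete problems.

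First, a geometric error: on any cone $\mathcal C=([0,\infty)\times\Sigma,\,dr^2+r^2\bar g)$ one has $R_{1ijk}=0$, so every sectional curvature involving the radial direction vanishes and $\lambda_{\min}(\Rm_{\mathcal C})=0$, not $\gtrsim\varrho^{-2}$ as you claim. Hence you cannot conclude $\nu>0$ on $M\setminus K$, and your handling of the infimum at infinity via ``$\psi\ge0$ off $K$'' and ``$\lambda_{\min}(\Rm_{\mathcal C})/R_{\mathcal C}>0$'' is unjustified. The correct statement is that $\tilde\nu/\tilde R\equiv0$ on the cone, which still rules out a \emph{negative} asymptotic infimum of $\psi$---this is what the paper actually uses---but your argument must be rewritten accordingly. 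The same error undermines your strictness step: you cannot invoke ``$\Rm>0$ near infinity forces rank $0$''. The paper instead argues that a parallel kernel forces a local (hence, by real-analyticity, global) Euclidean splitting, which is incompatible with the asymptotic cone having an isolated singularity and positive scalar curvature.

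Second, the algebraic obstacle you flag is real, and the paper's remedy is \emph{not} to attack $\nu/R$ directly. It proceeds in two stages. Stage one: with $F=f-2\log R$, derive $\Delta_F(\lambda_1/R)\le\frac{2h_1}{(n-1)(n-2)R^2}$ for the smallest Ricci eigenvalue $\lambda_1$, where the reaction term $h_1$ is shown to be $\le0$ \emph{unconditionally}; this yields $Rc\ge0$. Stage two: only then turn to $\mu/R$ (equivalently $(\lambda_1+\lambda_2)/R$), for which the analogous reaction term $E$ is $\le0$ \emph{given} $2$-nonnegative Ricci curvature. Your proposed one-step inequality $\nu^2+\sum_{k\ge3}(a_1+a_k)(a_2+a_k)>2\nu|Rc|^2/R$ under $\nu<0<R$ is precisely the statement that $E\le0$ without that auxiliary input, and there is no indication it holds; the two-step structure is what makes the algebra close.
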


\begin{rmk}
    Theorem \ref{thm:ac_Dflat} is an analogue of Z.-H. Zhang's result for locally conformally flat gradient shrinking (and steady) solitons \cite{Zhang:09}. Moreover, by  \cite{Cao-Chen:13, Cao-Yu:21}, the locally conformally flat assumption can be replaced by the weaker assumption of the vanishing $D$-tensor introduced in \cite{Cao-Chen:12,Cao-Chen:13}.
\end{rmk}

As a consequence, by combining Theorem \ref{thm:ac_Dflat} with the work of Cao-Chen \cite{Cao-Chen:13} and Chen-Wang \cite{Chen-Wang:15}, we have the following application in dimension four.

\begin{cor} \label{cor:4D_halfLCF}
	Let $(M^4,g,f)$ be a $4$-dimensional non-compact, half-conformally flat, asymptotically conical gradient expanding Ricci soliton. Suppose the asymptotic cone has positive scalar curvature. Then, $(M^4,g,f)$ has positive curvature operator. 
\end{cor}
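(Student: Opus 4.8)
The plan is to reduce Corollary~\ref{cor:4D_halfLCF} to Theorem~\ref{thm:ac_Dflat} in the stronger form recorded in the remark following that theorem, where local conformal flatness is relaxed to the vanishing of the Cao--Chen $D$-tensor (this improved version being valid by \cite{Cao-Chen:13,Cao-Yu:21}). Thus the entire task reduces to one structural fact: \emph{a $4$-dimensional half conformally flat gradient Ricci soliton has $D\equiv 0$.} Once this is known, $(M^4,g,f)$ is a non-compact, asymptotically conical, gradient expanding Ricci soliton with $D\equiv 0$ whose asymptotic cone has positive scalar curvature, so the $D$-tensor version of Theorem~\ref{thm:ac_Dflat} applies and gives that $(M^4,g,f)$ has positive curvature operator.

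To establish the structural fact, I would argue in two steps. First, in dimension four a half conformally flat metric is Bach-flat: the Bach tensor vanishes as soon as either the self-dual or the anti-self-dual part of the Weyl tensor does (such metrics minimize the Weyl functional, whose gradient in dimension four is the Bach tensor). Second, on any gradient Ricci soliton one has the Cao--Chen identities $\na_iR_{jk}-\na_jR_{ik}=R_{ijkl}\na^lf$ and $D_{ijk}=C_{ijk}+W_{ikjl}\na^lf$, together with a formula expressing the Bach tensor $B$ in divergence form through $D$; feeding $B\equiv 0$ into this machinery forces $D\equiv 0$. This is the line of reasoning used by Cao--Chen \cite{Cao-Chen:13} and Chen--Wang \cite{Chen-Wang:15} for $4$-dimensional half conformally flat gradient solitons. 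In the shrinking case the passage from $B\equiv 0$ to $D\equiv 0$ is carried out by integrating $|D|^2$ against the weighted measure $e^{-f}\,dV$; since here $f\to-\infty$ and that measure has infinite mass, I would instead exploit the asymptotically conical geometry — along which $|\na f|$ grows only linearly while the curvature decays like $r^{-2}$, so that $D$ and $B$ obey controlled bounds — to integrate by parts over the exhaustion by sublevel sets of $f$ and discard the boundary terms, or else invoke the more local Cotton-tensor analysis, which does not refer to the sign of $\rho$.

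The main obstacle is precisely this last point: the established derivation of $D\equiv 0$ from Bach-flatness is tied, through the finiteness of $\int_M e^{-f}\,dV$, to the shrinking normalization and has to be re-engineered for expanders — either localized, or made to run with the decay rates supplied by the asymptotically conical hypothesis. An alternative that sidesteps the $D$-tensor altogether is to appeal to the classification side of Cao--Chen \cite{Cao-Chen:13} and Chen--Wang \cite{Chen-Wang:15}: a half conformally flat gradient Ricci soliton is, up to the well-understood special cases, locally conformally flat or Einstein; a negative Einstein metric has constant negative scalar curvature and therefore cannot be asymptotic to a cone whose scalar curvature tends to zero, so an asymptotically conical gradient expanding soliton in this class with positive asymptotic scalar curvature must be locally conformally flat, and then Theorem~\ref{thm:ac_Dflat} applies directly.
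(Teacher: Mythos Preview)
Your approach is essentially the same as the paper's. The paper's proof is a short paragraph: half conformally flat $\Rightarrow$ $D\equiv 0$ (by ``essentially the same arguments as in Chen--Wang \cite{Chen-Wang:15}''), then $D\equiv 0 \Rightarrow$ locally conformally flat (by \cite[Proposition~3.2]{Cao-Chen:13}), and finally Theorem~\ref{thm:ac_Dflat} gives $Rm>0$. Your route via the $D$-tensor strengthening of Theorem~\ref{thm:ac_Dflat} is the same in substance, since that strengthening is itself obtained by the implication $D\equiv 0 \Rightarrow W\equiv 0$ from \cite{Cao-Chen:13}.

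The one point worth noting is that you are more careful than the paper about the passage from Bach-flat to $D\equiv 0$ in the expanding case. The paper simply asserts that the Chen--Wang argument goes through. Your concern that the integration against $e^{-f}\,dV$ breaks down for expanders is legitimate, but recall that Chen--Wang \cite{Chen-Wang:15} already treats \emph{steady} solitons in addition to shrinkers, and in the steady case $e^{-f}\,dV$ need not be finite either; their argument is therefore not tied to finiteness of the weighted volume in the way the shrinking-only treatment in \cite{Cao-Chen:13} is. So the paper's one-line citation is defensible, and your proposed workarounds (localizing via the asymptotically conical decay, or using the pointwise Cotton-tensor analysis) are reasonable alternatives but not strictly needed. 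Your second route, ruling out the Einstein branch via incompatibility with asymptotic conicality, is a clean variant that the paper does not pursue.
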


\begin{rmk}
	By \cite[Theorem 5.8]{Cao-etc:14},  the expanding solitons in Theorem \ref{thm:ac_Dflat} and Corollary \ref{cor:4D_halfLCF} are rotationally symmetric; see also Cao-Yu \cite[Corollary 3.4]{Cao-Yu:21}. 
\end{rmk}

Our next two results concern curvature pinching of 4-dimensional asymptotically conical gradient expanding Ricci solitons with either {\it positive isotropic curvature} (PIC) or {\it half-positive isotropic curvature} (half-PIC).

Recall that, for any oriented 4-manifold $(M^4,g)$, $2$-forms admit the decomposition $\wedge^2(M) = \wedge^{+}(M) \oplus \wedge^{-}(M)$, into self-dual and anti-self-dual $2$-forms. Accordingly, the Riemann curvature operator admits a block decomposition
\begin{equation*}
	Rm = 
	\begin{pmatrix}
		A & B \\
		B^t & C
	\end{pmatrix}
	=
	\begin{pmatrix}
		W^+ +\frac{R}{12}I & \mathring{Rc} \\
		\mathring{Rc} & W^- + \frac{R}{12}I
	\end{pmatrix},
\end{equation*}
where $W^{\pm}$ denote the self-dual and anti-self-dual part of the Weyl tensor, and $\mathring{Rc}$ the traceless Ricci part. It turns out that $(M^4,g)$ has PIC (half-PIC) if and only if $A$ and $C$ ($A$ or $C$) are 2-positive \cite{Ha:97}; see Section \ref{subsec:pic} for more details. 

\begin{thm} \label{thm:ac_halfPIC}
	Let $(M^4,g,f)$ be a $4$-dimensional non-compact asymptotically conical gradient expanding Ricci soliton with half-positive isotropic curvature (half-weakly PIC). 
	If the asymptotic cone has positive scalar curvature and satisfies either $A \geq 0$ or $C \geq 0$, then $(M^4,g,f)$ has $A > 0$ ($A\ge 0$) or $C > 0$ ($C\ge 0$).
\end{thm}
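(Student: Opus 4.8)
The plan is to run a maximum principle on the quotient $a_{1}/R$, where $a_{1}$ is the smallest eigenvalue of the self-dual block of the curvature operator and $R>0$ is the scalar curvature. Dividing by $R$ is the crucial device: it annihilates the ``wrong-sign'' zeroth-order term $2\rho\,\mathcal R$ with $\rho<0$, which is exactly what obstructs the naive argument and which reflects the non-ancient nature of expanders. We may assume the half-PIC hypothesis concerns $\Lambda^{+}$ (otherwise interchange $A$ and $C$ throughout), so that pointwise $a_{1}+a_{2}\ge 0$, where $a_{1}\le a_{2}\le a_{3}$ are the eigenvalues of the restriction $A$ of the curvature operator $\mathcal R$ to $\Lambda^{+}$; the matching hypothesis on the link reads $A_{\mathrm{cone}}\ge 0$ together with $\scal_{\mathrm{cone}}>0$, and the goal is $A>0$, i.e.\ $a_{1}>0$, on $M^{4}$. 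I will use the soliton equation for the curvature operator in the Uhlenbeck gauge,
\[
\Delta_{f}\mathcal R=2\rho\,\mathcal R-(\mathcal R^{2}+\mathcal R^{\#}),\qquad \Delta_{f}:=\Delta-\la\na f,\na\cdot\ra ,
\]
which on tracing gives $\Delta_{f}R=2\rho R-2|\Ric|^{2}$, together with the fact that for a unit $v\in\Lambda^{+}$ equal to the $a_{1}$-eigenvector of $A$ one has $\mathcal R^{2}(v,v)=|\mathcal R v|^{2}\ge a_{1}^{2}$ and $\mathcal R^{\#}(v,v)$ equal to a nonnegative multiple of $a_{2}a_{3}$ (in dimension four the $\Lambda^{+}$-block of $\mathcal R^{\#}$ is $A^{\#}$).

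First I would establish $R>0$ and set up the test function. Since $a_{1}+a_{2}\ge0$ we get $a_{2},a_{3}\ge0$, and $R$, being a positive multiple of $\tr A=(a_{1}+a_{2})+a_{3}$, is nonnegative. If $R$ vanished at a point, then — as $R>0$ near infinity by asymptotic conicity and $\scal_{\mathrm{cone}}>0$ — it would attain an interior zero; but $\Delta_{f}R=2\rho R-2|\Ric|^{2}\le0$ since $\rho<0$ and $R\ge0$, so the strong maximum principle forces $R\equiv0$, hence $\Ric\equiv0$ and $\na^{2}f=\rho g$, whence by Tashiro's theorem $(M^{4},g)$ is flat $\R^{4}$, whose asymptotic cone has vanishing scalar curvature — contradicting $\scal_{\mathrm{cone}}>0$. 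So $R>0$, and I set $\psi:=a_{1}/R$, a locally Lipschitz function. Because $g$ converges, modulo scaling, to the cone metric in $C^{2}_{\mathrm{loc}}$, one has $r^{2}a_{1}\to a_{1}^{\mathrm{cone}}\ge0$ and $r^{2}R\to\scal_{\mathrm{cone}}>0$ on the link, so $\liminf_{x\to\infty}\psi\ge0$; thus $\psi$ is bounded below, and if $m:=\inf_{M}\psi<0$ it is attained at some interior point $p_{0}$.

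The heart of the argument is the estimate at $p_{0}$. Assume $m<0$, attained at $p_{0}$. Pick a unit $v\in\Lambda^{+}_{p_{0}}$ with $A(v,v)=a_{1}(p_{0})$ and extend it to a local section with $|v|\equiv1$ and $\na v(p_{0})=0$ (Uhlenbeck's trick). Then $\tilde\psi:=\mathcal R(v,v)/R$ is smooth near $p_{0}$, satisfies $\tilde\psi\ge\psi$ with equality at $p_{0}$, and hence has a local minimum at $p_{0}$. With $h:=\mathcal R(v,v)$, the relation $\na\tilde\psi(p_{0})=0$ gives $\na h(p_{0})=m\,\na R(p_{0})$, and the quotient rule — in which all gradient terms cancel thanks to this relation and $h(p_{0})=mR(p_{0})$ — yields
\[
0\le\Delta_{f}\tilde\psi(p_{0})=\tfrac{1}{R(p_{0})}\left(\Delta_{f}h(p_{0})-m\,\Delta_{f}R(p_{0})\right).
\]
Since $\na v(p_{0})=0$ and $v$ is an $A$-eigenvector, a standard computation gives $\Delta_{f}h(p_{0})=(\Delta_{f}\mathcal R)(v,v)(p_{0})=2\rho a_{1}(p_{0})-(\mathcal R^{2}+\mathcal R^{\#})(v,v)(p_{0})$; on the other hand $m\,\Delta_{f}R(p_{0})=m(2\rho R-2|\Ric|^{2})=2\rho a_{1}(p_{0})-2m|\Ric|^{2}(p_{0})$ since $mR=a_{1}$ at $p_{0}$. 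The two $2\rho a_{1}$ terms cancel, leaving
\[
0\le-(\mathcal R^{2}+\mathcal R^{\#})(v,v)(p_{0})+2m|\Ric|^{2}(p_{0}).
\]
But $m<0$ makes $2m|\Ric|^{2}(p_{0})\le0$, while $a_{1}(p_{0})=mR(p_{0})<0$ and $a_{1}+a_{2}\ge0$ force $a_{2},a_{3}\ge0$, so $\mathcal R^{\#}(v,v)(p_{0})\ge0$ and $\mathcal R^{2}(v,v)(p_{0})\ge a_{1}(p_{0})^{2}>0$; hence the right-hand side is at most $-a_{1}(p_{0})^{2}<0$, a contradiction. Therefore $m\ge0$, i.e.\ $A\ge0$ on $M^{4}$.

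It remains to upgrade $A\ge0$ to $A>0$, and this is where I expect the main work. If the half-PIC hypothesis is strict ($a_{1}+a_{2}>0$), then the computation of Step 3 run at a point with $\psi=0$ gives $0\le-\mathcal R^{\#}(v,v)<0$ (now $a_{2},a_{3}>0$), so in fact $A>0$ at once. In general one invokes Hamilton's strong maximum principle for the reaction–diffusion inequality satisfied by $A$: either $A>0$ everywhere, or $\ker A$ is a nontrivial parallel subbundle of $\Lambda^{+}$ invariant under the algebraic operations in the evolution equation, and the latter forces, on the universal cover, a local metric splitting or a Kähler structure of the degenerate type — cases which must be excluded by combining the resulting holonomy reduction with the $C^{2}_{\mathrm{loc}}$ convergence to a cone with $\scal_{\mathrm{cone}}>0$, as in the equality analysis of \cite{Cao-Xie:25} (and invoking, if necessary, known rigidity results for Kähler expanding solitons). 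This gives $A>0$ on $M^{4}$ (and $C>0$ in the anti-self-dual case), proving the theorem. The maximum principle of Step 3 itself is robust — its only substantive input beyond the soliton curvature equation is the elementary remark that dividing by the positive scalar curvature kills the term $2\rho\,\mathcal R$ — so the genuine obstacle is the equality (Kähler / locally reducible) discussion in Step 4; a secondary technical point is to make precise, from the definition of ``asymptotically conical,'' the inequality $\liminf_{x\to\infty}(a_{1}/R)\ge0$ used in Step 2, which is routine but uses the $C^{2}$-closeness of $g$ to the cone metric.
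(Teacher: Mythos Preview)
Your approach is essentially the same as the paper's: divide the smallest eigenvalue of $A$ by $R$ to kill the bad $2\rho$ term, rule out an interior negative minimum via the reaction term $A_1^2+2A_2A_3+B_1^2\ge0$, rule out the minimum at infinity using $A_{\text{cone}}\ge0$ and $\scal_{\text{cone}}>0$, and then upgrade $A\ge0$ to $A>0$ by a pointwise barrier argument at a putative zero of $A_1$. The paper packages the quotient computation using the drift $\Delta_F$ with $F=f-2\log R$ and Calabi's strong maximum principle on a neighborhood, whereas you compute $\Delta_f\tilde\psi$ pointwise at the minimum using the first-order condition $\nabla h=m\nabla R$; these are equivalent formulations of the same inequality. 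Your use of Tashiro's theorem to get $R>0$ is fine; the paper simply cites a result of Chan for this step.

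One unnecessary complication: in your Step~4 you hedge about the ``general case'' and invoke Hamilton's dynamic strong maximum principle to analyze $\ker A$. This is not needed here. In the paper's terminology, \emph{half positive isotropic curvature} means the strict inequality $A_1+A_2>0$ (half WPIC is the non-strict version), so at any point with $A_1=0$ one has $A_2>0$, hence $A_3\ge A_2>0$, and the barrier inequality at $p_0$ gives
\[
0\le \Delta_f A_1(p_0)\le 2\rho\cdot 0 -2\bigl(A_1^2+2A_2A_3+B_1^2\bigr)(p_0)\le -4A_2A_3(p_0)<0,
\]
a contradiction. This is exactly the paper's part~(b) argument; no holonomy or K\"ahler rigidity analysis is required. (Incidentally, by the cone computations in the paper's Appendix one has $A=C$ on any $4$-cone, so the ``either $A\ge0$ or $C\ge0$'' hypothesis on the cone automatically matches whichever side of half-PIC you chose on $M$.)
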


\begin{thm} \label{thm:ac_PIC}
	Let $(M^4,g,f)$ be a $4$-dimensional non-compact asymptotically conical gradient expanding Ricci soliton with positive isotropic curvature (weakly PIC). If the asymptotic cone has positive scalar curvature, then the Ricci curvature of $(M^4,g,f)$ is $2$-positive ($2$-nonnegative), and $|Rm| \leq  2R$. 
\end{thm}

\begin{rmk}
	Theorems \ref{thm:ac_halfPIC} and \ref{thm:ac_PIC} are analogues of our previous results for $4$-dimensional complete ancient solutions with half-PIC \cite[Proposition 3.1]{Cao-Xie:23} and PIC \cite[Theorem 1.1]{Cao-Xie:25}, respectively. Moreover, both results are valid under some slightly weaker assumptions; see Theorem \ref{thm:ac_A_2>0} and Theorem \ref{thm:ac_PIC_restate} in Section \ref{sec:ac-halfPIC}. 
\end{rmk}

By observing the common pattern in the proofs of Theorem \ref{thm:ac_halfPIC} and Theorem \ref{thm:ac_PIC}, we formulate a general method of proof (Lemma \ref{lem:general}) and apply it to obtain analogues of several additional known generalized Hamilton--Ivey type curvature pinching results for ancient solutions. This includes the following result for $4$-dimensional asymptotically conical gradient expanding Ricci solitons with uniformly positive isotropic curvature (UPIC), as well as several other results as stated in Theorem~\ref{thm:curv_pinching}.

\begin{thm} \label{thm:ac_UPIC}
	Let $(M^4,g,f)$ be a $4$-dimensional non-compact asymptotically conical gradient expanding Ricci soliton with uniformly positive isotropic curvature. If the asymptotic cone is a non-flat Euclidean cone, then the curvature operator of $(M^4,g,f)$ is positive. 
\end{thm}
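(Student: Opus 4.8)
The plan is to apply the general method of Lemma~\ref{lem:general} to the Hamilton‑ODE‑invariant cone of nonnegative curvature operators, and then to upgrade nonnegativity to strict positivity by a strong maximum principle.

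Normalize so that $Rc+\nabla^2 f=-\tfrac12 g$. On a gradient Ricci soliton the curvature operator satisfies an elliptic identity whose zeroth‑order part is, schematically, $\Delta_f\Rm=2\rho\,\Rm-Q(\Rm)+(\text{lower-order curvature terms})$, where $\Delta_f=\Delta-\langle\nabla f,\nabla\cdot\rangle$ and $Q$ is Hamilton's quadratic reaction operator; when $\rho<0$ the coefficient of $\Rm$ has the sign that is \emph{unfavorable} for a maximum principle, which is precisely why the ancient‑solution proofs do not transfer. Let $\mathcal C$ be the cone of four‑dimensional algebraic curvature operators with $\Rm\ge 0$: it is closed, convex, $O(4)$‑invariant, and invariant under $\tfrac{d}{dt}\Rm=Q(\Rm)$ --- exactly the structural input of Lemma~\ref{lem:general}. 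It then remains to check the condition at infinity, namely that $\Rm_g(x)$ approaches $\mathcal C$ along the ends. Here both hypotheses are used: weak PIC passes to the asymptotic cone, and the rigidity of positive isotropic curvature on a metric cone (its link being a round spherical space form), combined with the assumption that the cone is a non-flat Euclidean cone, forces the relevant sphere radius to be $\le 1$ and hence $\Rm_C\ge 0$ (note that $\Rm_C$ always annihilates radial planes, and vanishes identically away from the vertex in the flat case). By asymptotic conicity $\Rm_g\to\Rm_C$ along each end, so $\mathrm{dist}(\Rm_g(x),\mathcal C)\to 0$ as $x\to\infty$, which supplies the behavior-at-infinity hypothesis of Lemma~\ref{lem:general}; applying it yields $\Rm_g\ge 0$ on all of $M^4$.

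To promote this to $\Rm_g>0$, apply a strong maximum principle for the curvature operator, in the spirit of Hamilton's strong maximum principle (via Bony's strong maximum principle for degenerate elliptic operators), to the elliptic soliton identity: at a point where $\ker\Rm_g\neq 0$ the null spaces span a parallel distribution, so $(M^4,g)$ locally splits as a metric product. By the soliton equation the factors are themselves gradient expanders, and the asymptotic cone of $(M^4,g)$ is then a metric join (spherical suspension) of their asymptotic cones; such a join is a non-flat Euclidean cone only in degenerate situations forcing a lower-dimensional, asymptotically Euclidean expander factor that must be flat, whence $(M^4,g)$ would be flat, contradicting non-flatness. (The uniform PIC hypothesis also directly excludes the K\"ahler, locally symmetric, and $\R^2$-splitting models, none of which has positive isotropic curvature.) Hence $\Rm_g>0$ everywhere, as claimed. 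The same scheme --- with $\mathcal C$ replaced by other Hamilton-ODE-invariant curvature cones and with the concluding strong-maximum-principle step adapted --- yields the remaining statements collected in Theorem~\ref{thm:curv_pinching}.

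The main obstacle is the application of Lemma~\ref{lem:general} itself, i.e.\ the step that circumvents the non-ancient nature of the expander: because of the unfavorable sign of the $\Rm$-term above, an interior maximum of $\mathrm{dist}(\Rm_g,\mathcal C)$ yields no contradiction, so one cannot argue directly as for ancient or shrinking/steady solitons. Following the second author's work \cite{Xie:25} on asymptotically conical mean curvature expanders (inspired in turn by Spruck--Xiao \cite{Spruck-Xiao:20} and Xie--Yu \cite{Xie-Yu:23}), the resolution is to test the relevant curvature quantity against a positive weight built from the potential function $f$, using $\Delta f=n\rho-R$ and the conserved quantity $R+|\nabla f|^2-2\rho f$ to turn the bad sign into a good one, and then to close the argument with an Omori--Yau-type maximum principle at infinity (legitimate since the curvature, hence the Ricci tensor, is bounded on the asymptotically conical manifold). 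Additional care is needed in the cone computation above and in verifying that "non-flat Euclidean cone" together with uniform PIC truly forbids every holonomy reduction in dimension four.
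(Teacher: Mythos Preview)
Your proposal has a genuine gap at the very first step. Lemma~\ref{lem:general} is formulated for a \emph{scalar} function $u$ satisfying $\Delta_f u \le 2\rho u - v$ with $v\ge 0$ \emph{everywhere}, and it is this global nonnegativity of $v$ that makes the ratio $u/R$ a subsolution on the set $\{u<0\}$. You propose to take $u$ to be (essentially) the smallest eigenvalue $\mu_1$ of $\Rm$, invoking ODE-invariance of the cone $\{\Rm\ge 0\}$ under $Q(\Rm)=\Rm^2+\Rm^\sharp$. But ODE-invariance only says that $Q(\Rm)(\eta,\eta)\ge 0$ when $\Rm$ lies \emph{on the boundary} of the cone (i.e.\ $\Rm\ge 0$ and $\mu_1=0$); it gives no sign whatsoever for $(\Rm^2+\Rm^\sharp)(\eta_1,\eta_1)$ when $\mu_1<0$. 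So the hypothesis $v\ge 0$ is not available precisely on the set $M^-$ where you need it, and the $u/R$ weighting trick cannot close the argument. (Compare with Theorem~\ref{thm:curv_pinching}(a): there the \emph{additional} hypothesis of $2$-nonnegative curvature operator is exactly what forces the reaction term to be nonnegative.)

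The paper's proof is therefore substantially different and more delicate. Following Brendle~\cite{Brendle:14}, it never applies Lemma~\ref{lem:general} to $\mu_1$ directly. Instead it exploits the $4$-dimensional block decomposition $(A,B,C)$ and the UPIC pinching constant $\Lambda$ to run three successive applications of Lemma~\ref{lem:general} to carefully chosen scalar quantities: first $u=(6\Lambda^2+1)A_1-A_3$ (and its $C$-analogue), then $u=\gamma\sqrt{(A_1+A_2)(C_1+C_2)}/2 - B_3 - \delta_1 R$, and finally $u=\gamma'\sqrt{A_1C_1}-B_3-\delta_2 R$. At each stage the UPIC hypothesis together with the previously established inequalities is what guarantees $v\ge 0$; the Euclidean-cone assumption enters because on the asymptotic cone one has $\bar A_i=\bar B_j=\bar C_k$ for all $i,j,k$, which verifies the boundary condition~\eqref{eq:lim_u/R} for each of these $u$'s. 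The output $B_3^2\le A_1C_1$ then implies $\Rm\ge 0$ by an elementary linear-algebra fact. Your final strong-maximum-principle step for $\Rm>0$ is essentially the same as the paper's.
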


\begin{rmk}
    Theorem \ref{thm:ac_UPIC} is an analogue of Brendle's curvature pinching result for  $4$-dimensional gradient steady Ricci solitons with UPIC \cite{Brendle:14}, as well as of Cho-Li's result for $4$-dimensional complete ancient solutions with UPIC \cite{Cho-Li:23}.
\end{rmk}

By combining Theorem \ref{thm:ac_UPIC}  with Chodosh's work \cite{Chodosh:14}, we obtain the following classification for 4D asymptotically conical expanding Ricci solitons with UPIC.

\begin{cor} \label{cor:ac_UPIC}
	Let $(M^4,g,f)$ be a $4$-dimensional non-compact gradient expanding Ricci soliton with uniformly positive isotropic curvature. If  $(M^4,g,f)$ is asymptotic\footnote{Here, the notion of asymptotically conical is $C^2$-asymptotic in the sense of Chodosh \cite[Definition 1.1]{Chodosh:14} which is slightly stronger than that in our Definition \ref{defn:conical}.} to a non-flat Euclidean cone, then it is rotationally symmetric.
\end{cor}

\begin{rmk}
	We note that any $4$-dimensional non-compact gradient expanding Ricci soliton with PIC that is asymptotic to a non-flat Euclidean cone automatically has UPIC. Therefore, one could replace the assumption of UPIC by PIC in both Theorem \ref{thm:ac_UPIC} and Corollary \ref{cor:ac_UPIC}. 
\end{rmk}

\noindent {\bf Organization of the Paper.} Section \ref{sec:pre} introduces the notation and basic concepts used throughout the paper, and collects several useful facts needed for the main arguments. Section \ref{sec:ac_Dflat} is devoted to the proofs of Theorem \ref{thm:ac-3D} and Theorem \ref{thm:ac_Dflat}. In Section \ref{sec:ac-halfPIC}, we present the proofs of Theorem \ref{thm:ac_halfPIC} and Theorem \ref{thm:ac_PIC}. In Section \ref{sec:general_lem}, we formulate a general lemma that can be applied especially to asymptotically conical gradient expanding Ricci solitons and use it to prove Theorem \ref{thm:ac_UPIC}  and Theorem \ref{thm:curv_pinching}. 
 Finally, the Appendix contains some elementary curvature properties of cones used in Section \ref{sec:ac_Dflat} and Section \ref{sec:ac-halfPIC}.

\medskip
\noindent {\bf Acknowledgements.} The first author was partially supported by a Simons Fellowship and a grant from the Simons Foundation. The second author would like to thank Prof. Xiaochun Rong for his continual support and encouragement. Both authors would like to thank Dr. P.-Y. Chan for very helpful information that allowed the removal of the positive scalar curvature assumption on $(M^n, f, g)$  in Theorem \ref{thm:ac-3D} and Theorem \ref{thm:ac_Dflat} in an earlier version of the paper. We also thank the anonymous referees for their helpful comments.

\bigskip
\section{Preliminaries} \label{sec:pre}
In this section, we fix notation and recall several basic facts and results that will be used throughout the paper. Throughout, we denote by 
$$Rm = \{R_{ijkl}\},\quad Rc = \{R_{ij}\},\quad R$$
the Riemann curvature tensor, the Ricci tensor, and the scalar curvature of the metric $g=\{g_{ij}\}$, respectively, either in local coordinates or with respect to a local orthonormal frame.

\subsection{Asymptotically conical expanding Ricci solitons} \label{subsec:ac_expander}
Recall that, in general, by an $n$-dimensional  \emph{cone} we mean an $n$-manifold
\begin{equation*}
	\mathcal{C} := [0,\infty) \times \Sigma^{n-1}
\end{equation*}
equipped with the Riemannian metric
\begin{equation*}
	g_c = dr^2 + r^2 \bar{g}_\Sigma,
\end{equation*}
where $(\Sigma^{n-1}, \bar{g}_\Sigma)$, called the link of  the cone $\mathcal{C}$, is a closed $(n-1)$-dimensional Riemannian manifold.  
As an example, the standard non-flat Euclidean cone with cone angle $\alpha \in [0,1)$ is given by the conical metric $g_\alpha$ on $\mathbb{R}^n\setminus \{0\}$, 
expressed in polar coordinates as
\begin{equation*} \label{eq:cone_alpha}
	g_\alpha := dr^2 + (1-\alpha) r^2 \bar{g}_{\rS^{n-1}(1)} .
\end{equation*}

In general, for any cone $\mathcal{C}$ over the link  $\Sigma$ and for $s \geq 0$, set
\begin{equation*}
	E_s = (s,\infty) \times \Sigma \subset\mathcal{C},
\end{equation*}
and define the dilation by $\tau > 0$ as the map
\begin{equation*}
	\rho_\tau : E_0 \to E_0, 
	\qquad 
	\rho_\tau(r,\sigma) = (\tau r, \sigma).
\end{equation*}

\begin{defn} \label{defn:conical}\
	\begin{itemize} 
		\item[(a)] A Riemannian manifold $(M^n, g)$ is said to be \emph{$C^k$-asymptotic to the cone $(E_0,g_c)$} if, for some $s > 0$, 
		there exists a diffeomorphism 
		\begin{equation*}
			\Phi : E_s \to M \setminus K,
		\end{equation*}
		for some compact subset $K \subset M$, 
		such that
		\begin{equation*}
			\tau^{-2} \rho_\tau^* \Phi^* g \longrightarrow g_c 
			\quad \text{as } \tau \to \infty
			\quad \text{in } C^k_{\mathrm{loc}}(E_0,g_c).
		\end{equation*}
		
		\item[(b)] We say that a Riemannian manifold $(M,g)$ is \emph{asymptotically conical} if there exists a cone $(E_0,g_c)$ such that $(M,g)$ is $C^k$-asymptotic to $(E_0,g_c)$ 
		for all integers $k\geq 0$.
	\end{itemize}
\end{defn}

\subsection{Curvature decomposition and isotropic curvature of four-manifolds} \label{subsec:pic}
In this subsection, we recall some facts about the curvature decomposition and isotropic curvature of $4$-manifolds. For more background, we refer the reader to Hamilton's paper \cite{Ha:97} and our previous work \cite{Cao-Xie:23,Cao-Xie:25}.

For any oriented Riemannian $4$-manifold $(M^4,g)$, the bundle of $2$-forms admits the decomposition
\begin{equation*}
	\wedge^2(M) = \wedge^{+}(M) \oplus \wedge^{-}(M),
\end{equation*}
where $\wedge^+(M)$ and $\wedge^-(M)$ denote the subbundles of self-dual and anti-self-dual $2$-forms, respectively. With respect to this splitting, the curvature operator has block form
\begin{equation} \label{eq:CODecomposition}
	{Rm} = 
	\begin{pmatrix}
		A & B \\
		B^{t} & C
	\end{pmatrix}
	=
	\begin{pmatrix}
		W^+ + \tfrac{R}{12}I & \mathring{Rc} \\
		\mathring{Rc} & W^- + \tfrac{R}{12}I
	\end{pmatrix},
\end{equation}
where $W^{\pm}$ are the self-dual and anti-self-dual Weyl tensors, $\mathring{Rc}$ denotes the traceless Ricci tensor\footnote{More precisely, $B:\wedge^-(M)\to \wedge^+(M)$ is given by $\mathring{Rc}\KN g$, the Kulkarni-Nomizu product of $\mathring{Rc}$ and $g$. In particular, $B\equiv 0$ whenever $(M^4,g)$ is Einstein.}, and $R$ is the scalar curvature.  

Let $A_1 \leq A_2 \leq A_3$ and $C_1 \leq C_2 \leq C_3$ denote the eigenvalues of $A$ and $C$, respectively.  It is also well-known that $\tr A = \tr C = {R}/{4}$.

\begin{defn}
	An $n$-dimensional Riemannian manifold $(M^n,g)$, $n \geq 4$, is said to have \emph{positive isotropic curvature} (PIC)  if
	\begin{equation*} \label{eq:PIC} 
		R_{1313}+R_{1414}+R_{2323}+R_{2424}-2R_{1234} > 0
	\end{equation*}
	for every orthonormal four-frame $\{e_1,e_2,e_3,e_4\}$. Similarly, it has \emph{nonnegative isotropic curvature}, or \emph{weakly PIC} (WPIC) if, for every such frame, 
	\begin{equation*} \label{eq:WPIC} 
		R_{1313}+R_{1414}+R_{2323}+R_{2424}-2R_{1234} \geq 0.
	\end{equation*}
\end{defn}

The notion of isotropic curvature was first introduced by Micallef-Moore \cite{Micallef-Moore:88}, in which they proved that any compact simply connected $n$-dimensional Riemannian manifold with PIC is homeomorphic to a round sphere. It also plays a key role in the convergence theory for the higher dimensional Ricci flow, especially in Brendle-Schoen's proof of the 1/4-pinching differentiable sphere theorem \cite{Brendle-Schoen:09}. 

It turns out that, in dimension four, these curvature conditions (and their natural extensions, half-PIC or half-WPIC) can be characterized in terms of the $3\times 3$ matrices $A$ and $C$ as follows:

\smallskip
\begin{itemize}
	\item {\em PIC} ({\em WPIC}) if and only if $A$ and $C$ are 2-positive (weakly 2-positive), i.e., $A_1 + A_2 > 0$ ($A_1 + A_2 \geq 0$) and  $C_1 + C_2 > 0$ ($C_1 + C_2 \geq 0$) on $M$  \cite{Ha:93};

	\smallskip
	\item {\em half-PIC} ({\em half-WPIC}) if and only if either $A$ or $C$ is 2-positive (weakly 2-positive), i.e., $A_1 + A_2 > 0$ ($A_1 + A_2 \geq 0$) or $C_1 + C_2 > 0$ ($C_1 + C_2 \geq 0$). 
\end{itemize}

\begin{defn}
	$(M^4, g)$ is said to be \emph{uniformly PIC} (UPIC) if $M^4$ has PIC and in addition satisfies the pointwise pinching condition  
	\begin{equation*} \label{eq:uniform-PIC}
		\max\{A_3,B_3,C_3\} \leq \Lambda \min\{A_1+A_2, C_1+C_2\}
	\end{equation*}
	on $M^4$ for some constant $\Lambda \geq 1$.
\end{defn}

\subsection{Basic differential equations satisfied by curvatures of  Ricci solitons } \label{subsec:id_solitons}
As a special case of curvature evolution equations under the Ricci flow \cite{Ha:86, Ha:97}, we have the following well-known curvature differential equations for any gradient Ricci soliton satisfying \eqref{eq:Riccisoliton}.

\begin{lem}[cf. Hamilton \cite{Ha:86}] \label{lem:4Dequations}
	Let $(M^4,g(t))$ be a $4$-dimensional complete gradient Ricci soliton satisfying Eq. \eqref{eq:Riccisoliton}. Then,
	\begin{gather*}
		\Delta_f R = 2\rho R - 2|Rc|^2, \\
		\Delta_f Rm = 2\rho Rm - 2(Rm^2 + Rm^{\sharp}), \\
		\Delta_f A = 2\rho A - 2(A^2 + 2A^{\sharp} + BB^t), \\
		\Delta_f B = 2\rho B - 2(AB + BC + 2B^{\sharp}), \\
		\Delta_f C = 2\rho C - 2(C^2 + 2C^{\sharp} + B^t B).
	\end{gather*}
	\noindent Here, for any $3\times 3$ matrix $D$, we denote by $D^2$ its square and by $D^{\sharp}$ the transpose of its adjoint. In addition, $\Delta_f := \Delta -\nabla f\cdot\nabla$ denotes the weighted Laplace operator.
\end{lem}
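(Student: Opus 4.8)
The plan is to obtain these identities as the elliptic counterparts of Hamilton's parabolic curvature evolution equations \cite{Ha:86, Ha:97}, via the standard dictionary between a gradient Ricci soliton and its self-similar solution to the Ricci flow. Given $(M^4,g,f)$ with $Rc+\nabla^2 f=\rho g$, the rescaled pull-back metrics $g(t):=(1-2\rho t)\,\varphi_t^* g$, where $\{\varphi_t\}$ is the flow of the time-dependent vector field $(1-2\rho t)^{-1}\nabla f$ with $\varphi_0=\mathrm{id}$, solve $\partial_t g=-2\,Rc(g(t))$ for $t$ near $0$: at $t=0$ one checks $\partial_t g=-2\rho\,g+\mathcal{L}_{\nabla f}g=-2\rho\,g+2\nabla^2 f=-2\,Rc$, and the self-similar structure propagates this. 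Along this flow, using Uhlenbeck's trick (evolving an orthonormal frame by $\partial_t e_a=Rc(e_a)$), Hamilton's equations take the clean form $\partial_t R=\Delta R+2|Rc|^2$ and $\partial_t Rm=\Delta Rm+2(Rm^2+Rm^{\sharp})$, and, in dimension four, after decomposing the curvature operator along the parallel splitting $\wedge^2=\wedge^+\oplus\wedge^-$, the blocks $A,B,C$ evolve with reaction terms $A^2+2A^{\sharp}+B{}^tB$, $AB+BC+2B^{\sharp}$, and $C^2+2C^{\sharp}+{}^tB B$ respectively --- this being Hamilton's block form of $Rm^2+Rm^{\sharp}$ with respect to $\mathfrak{so}(4)=\mathfrak{so}(3)\oplus\mathfrak{so}(3)$.

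Next I would specialize the time derivative. Each of $R,Rm,A,B,C$ is natural and homogeneous of scaling weight $-1$ under $g\mapsto\lambda g$ (like sectional curvature), so $T(g(t))=(1-2\rho t)^{-1}\varphi_t^* T(g)$ and hence $\partial_t T(g(t))\big|_{t=0}=2\rho\,T+\mathcal{L}_{\nabla f}T$. In the Uhlenbeck gauge the non-covariant, frame-dependent terms coming from $\mathcal{L}_{\nabla f}$ combine with those from the frame evolution, via $Rc+\nabla^2 f=\rho g$, into a multiple of $T$, so that $\partial_t T(g(t))\big|_{t=0}=2\rho\,T+\nabla_{\nabla f}T$. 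Equating this with Hamilton's equations above and rewriting $\Delta_f=\Delta-\nabla f\cdot\nabla$ then yields all five identities at once, with the quadratic terms exactly as stated.

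The one genuinely delicate point is this last reduction --- verifying that, in the Uhlenbeck gauge, $\partial_t$ acting on these curvature quantities is precisely $2\rho+\nabla_{\nabla f}$, i.e. that the $\nabla^2 f$-corrections from the Lie derivative, the $Rc$-corrections from the frame evolution, and the factor $(1-2\rho t)^{-1}$ conspire through the soliton equation. If one prefers to sidestep this, the identities can instead be derived intrinsically on the soliton: from $Rc+\nabla^2 f=\rho g$ and the contracted second Bianchi identity $\nabla_i R_{ij}=\tfrac12\nabla_j R$ one gets $\nabla_i R=2R_{ij}\nabla_j f$, whence, using also $\nabla^2 f=\rho g-Rc$, $\Delta R=2(\nabla_i R_{ij})\nabla_j f+2R_{ij}\nabla_i\nabla_j f=\langle\nabla R,\nabla f\rangle+2\rho R-2|Rc|^2$, which is the first identity; the identity for $Rm$ follows the same way by differentiating the second Bianchi identity, substituting Hamilton's soliton identity $\nabla_k R_{ij}-\nabla_i R_{kj}=R_{kijl}\nabla_l f$, and commuting covariant derivatives, after which the quadratic-in-curvature terms reassemble into $Rm^2+Rm^{\sharp}$ --- this reassembly being the main computation --- and the equations for $A,B,C$ then follow from it via the curvature decomposition as above.
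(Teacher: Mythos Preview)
The paper does not supply a proof of this lemma at all: it simply introduces the identities as ``well-known'' elliptic counterparts of Hamilton's parabolic curvature evolution equations, citing \cite{Ha:86, Ha:97}. Your derivation via the self-similar Ricci flow associated to the soliton --- specializing Hamilton's evolution equations at $t=0$ in the Uhlenbeck gauge and using the scaling weight of the curvature quantities to identify $\partial_t$ with $2\rho+\nabla_{\nabla f}$ --- is precisely the standard route implicit in that citation, and your alternative intrinsic computation (via the second Bianchi identity and the soliton identity $\nabla_k R_{ij}-\nabla_i R_{kj}=R_{kijl}\nabla_l f$) is equally valid. Both are correct and consistent with the paper's treatment.
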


\begin{rmk}
	Except for the first identity in Lemma \ref{lem:4Dequations}, the factor $2$ differs from \cite{Ha:86} due to our normalization of the inner product on $\wedge^2(M)$ (see \eqref{eq:metric_2forms}). Moreover, the first two equations are valid in all dimensions.
\end{rmk}

\subsection{Calabi's barrier maximum principle} \label{subsec:calabi}
Finally, we shall need the following {\it barrier maximum principle} due to Calabi.

\begin{lem} [\cite{Calabi:58}] \label{lem:barrier-max}
	Let $\Omega \subset M$ be a bounded connected domain with smooth boundary, and 
	let $u \in C^0(\Omega)$. Let $L$ be a uniformly elliptic operator with continuous coefficients and vanishing constant term. If $L(u) \leq 0$ (resp. $L(u) \geq 0$) on $\Omega$ in the barrier sense, then
	\begin{equation*}
		\sup_{\Omega} u \geq \sup_{\partial \Omega} u 
		\qquad 
		\bigl(\text{resp. } \inf_{\Omega} u \leq \inf_{\partial \Omega} u \bigr).
	\end{equation*}
	Moreover, if $u$ attains an interior minimum (resp. maximum), then $u$ is a constant in $\Omega$.
\end{lem}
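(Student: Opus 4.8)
The plan is to reduce Calabi's lemma to the classical Hopf maximum principle, exploiting that in Hopf's proofs the $C^{2}$ regularity of a genuine sub/supersolution is only ever used \emph{pointwise} --- one point at a time in the weak principle, and through one explicit $C^{2}$ comparison function in the strong principle --- so that one-sided $C^{2}$ barriers suffice. I would treat the supersolution case ($L(u)\le 0$ in the barrier sense); the case $L(u)\ge 0$ then follows by replacing $u$ with $-u$. Here ``$L(u)\le 0$ in the barrier sense'' means, as in \cite{Calabi:58}, that for each $x_{0}\in\Omega$ and each $\varepsilon>0$ there is a $C^{2}$ function $\varphi$ on a neighborhood of $x_{0}$ with $\varphi(x_{0})=u(x_{0})$, $\varphi\ge u$ nearby, and $L\varphi(x_{0})\le\varepsilon$. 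The one observation driving everything is this: if $u$ has a local minimum at $x_{0}\in\Omega$ and $\varphi$ is any upper barrier there, then $\varphi\ge u\ge u(x_{0})=\varphi(x_{0})$ near $x_{0}$, so $\varphi$ also has a local minimum at $x_{0}$; hence $\nabla\varphi(x_{0})=0$ and $D^{2}\varphi(x_{0})\ge 0$, and, since $L$ has no zeroth-order term, $L\varphi(x_{0})=\operatorname{tr}\!\big(a(x_{0})D^{2}\varphi(x_{0})\big)\ge 0$ by uniform ellipticity. Thus, if at $x_{0}$ the function $u$ admits an upper barrier with $L\varphi(x_{0})<0$, then $u$ has no local minimum at $x_{0}$; in particular a continuous ``strict'' barrier-sense supersolution on a domain $A$ with compact closure has no interior minimum, so its minimum over $\overline A$ is attained on $\partial A$.

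For the weak minimum principle of a general barrier-sense supersolution $u$, I would perturb to the strict case. In a chart, $v=e^{\gamma x^{1}}$ satisfies $Lv=e^{\gamma x^{1}}\big(\gamma^{2}a^{11}+\gamma b^{1}\big)\ge\beta>0$ for $\gamma$ large (uniform ellipticity bounds $a^{11}$ below, continuity of the coefficients bounds $b^{1}$). For $\varepsilon>0$, taking at each $x_{0}$ an upper barrier $\varphi$ of $u$ with $L\varphi(x_{0})\le\varepsilon\beta/2$ shows $\varphi-\varepsilon v$ is an upper barrier of $u_{\varepsilon}:=u-\varepsilon v$ with $L(\varphi-\varepsilon v)(x_{0})<0$; hence $u_{\varepsilon}$ is a strict barrier-sense supersolution, its minimum is attained on the boundary, and letting $\varepsilon\to 0$ (uniform convergence) gives the same for $u$. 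The same perturbation device will be reused inside the strong principle.

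For the strong minimum principle, suppose $u$ attains its infimum $m$ at an interior point and $\Sigma:=\{x\in\Omega:u(x)=m\}\ne\Omega$; since $\Omega$ is connected it is enough to reach a contradiction, as this forces $\Sigma$ to be open, hence $\Sigma=\Omega$ and $u\equiv m$. By a standard argument choose $p\in\Omega\setminus\Sigma$ close to $\Sigma$ with $\rho:=\operatorname{dist}(p,\Sigma)<\operatorname{dist}(p,\partial\Omega)$ and $\rho$ small enough to work in normal coordinates about $p$; then $u>m$ on $B_{\rho}(p)$ while $u(y_{0})=m$ for some $y_{0}\in\partial B_{\rho}(p)\subset\Omega$. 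This is Hopf's boundary-point lemma, which I would run with the comparison function $h(y)=e^{-\alpha|y-p|^{2}}-e^{-\alpha\rho^{2}}$: it vanishes on $\partial B_{\rho}(p)$, is positive inside, is negative outside, and satisfies $Lh>0$ on the annulus $A=B_{\rho}(p)\setminus\overline{B_{\rho/2}(p)}$ for $\alpha$ large. Choosing $\eta>0$ with $\eta\,\max_{\partial B_{\rho/2}(p)}h\le\min_{\partial B_{\rho/2}(p)}(u-m)$, the function $z:=u-m-\eta h$ is $\ge 0$ on $\partial A$ and, as in the previous paragraph (with $h$ in place of $v$), is a strict barrier-sense supersolution on $A$; hence $z\ge 0$ on $\overline A$, i.e. $u\ge m+\eta h$ on $\overline A$. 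Since $h<0$ outside $B_{\rho}(p)$ and $u\ge m$ on $\Omega$, in fact $u\ge m+\eta h$ on a two-sided neighborhood of $y_{0}$, with equality at $y_{0}$; thus $\psi:=m+\eta h$ is a $C^{2}$ lower barrier of $u$ at $y_{0}$ with $\nabla\psi(y_{0})=-2\alpha\eta\,(y_{0}-p)\,e^{-\alpha\rho^{2}}\ne 0$. But $y_{0}$ is a global minimum point of $u$, so by the first paragraph any upper barrier $\varphi$ of $u$ at $y_{0}$ has $\nabla\varphi(y_{0})=0$; while $\psi\le u\le\varphi$ near $y_{0}$ with equality at $y_{0}$ makes $y_{0}$ a minimum of $\varphi-\psi\ge 0$, whence $\nabla\varphi(y_{0})=\nabla\psi(y_{0})\ne 0$ --- a contradiction.

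A word on the hypotheses and on the difficulty: continuity of the coefficients is what lets one evaluate $L$ on barriers pointwise and construct $v$ and $h$; uniform ellipticity enters both in the trace inequality $\operatorname{tr}(aD^{2}\varphi)\ge 0$ at a minimum and in arranging $Lv,Lh>0$; and the vanishing of the zeroth-order term is indispensable in the first paragraph, since a surviving term $c\,\varphi(x_{0})$ with $c\not\le 0$ would wreck the sign of $L\varphi(x_{0})$ at a minimum (and indeed the lemma is false for such $L$). I expect the only genuinely fiddly point to be the third paragraph --- organizing Hopf's boundary-point lemma so that it outputs a one-sided $C^{2}$ barrier with non-vanishing gradient at the contact point, and checking that this barrier touches $u$ on a two-sided neighborhood of $y_{0}$ --- while the rest is a line-by-line transcription of the classical $C^{2}$ arguments with barriers inserted in place of $u$.
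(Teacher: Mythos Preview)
The paper does not supply its own proof of this lemma: it is stated as a known result, cited directly from Calabi \cite{Calabi:58}, and used thereafter as a black box. Your proposal is a correct and self-contained sketch of the classical argument --- the pointwise barrier observation at a minimum, the perturbation by $e^{\gamma x^{1}}$ to pass from weak to strict supersolutions, and Hopf's boundary-point comparison function $e^{-\alpha r^{2}}-e^{-\alpha\rho^{2}}$ for the strong principle --- so there is nothing to compare against, and nothing to correct.
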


\section{Curvature pinching for asymptotically conical Ricci expanders with vanishing Weyl tensor} \label{sec:ac_Dflat}

This section is devoted to the proof of Theorem \ref{thm:ac-3D} and Theorem \ref {thm:ac_Dflat}. Note that both theorems follow from the following result.

\begin{thm} \label{thm:ac_Wflat}
	Let $(M^n,g,f)$, $n\geq 3$, be an $n$-dimensional non-compact asymptotically conical gradient expanding Ricci soliton with vanishing Weyl tensor $W\equiv 0$. Suppose the asymptotic cone has positive scalar curvature. Then, $(M^n,g,f)$ has positive curvature operator.	
\end{thm}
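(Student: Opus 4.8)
The plan is to combine two ingredients: a \emph{strong maximum principle} (via Calabi's barrier argument, Lemma~\ref{lem:barrier-max}) applied to the curvature reaction–diffusion system in Lemma~\ref{lem:4Dequations}, and \emph{asymptotic control at infinity} coming from the fact that $(M^n,g,f)$ is asymptotic to a cone with positive scalar curvature. Since $W\equiv 0$, the curvature operator $Rm$ is determined algebraically by the Ricci tensor, and the relevant ODE/PDE system governing the eigenvalues of $Rm$ closes up nicely; for locally conformally flat metrics the Weitzenböck-type reaction term $Rm^2+Rm^\sharp$ preserves the cone of nonnegative curvature operators, so the system is amenable to a maximum-principle argument of Hamilton's type. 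The goal is therefore to show (i) that $Rm\ge 0$ everywhere, and then (ii) that strict positivity holds by the strong maximum principle.

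First I would establish the asymptotic picture. Because $(M^n,g)$ is $C^k$-asymptotic for all $k$ to a cone $(\mathcal C,g_c)$ with positive scalar curvature of the link, the Appendix computations (referenced in the excerpt) give that a cone with $W\equiv 0$ and positive scalar curvature has nonnegative --- in fact, one expects suitably positive --- curvature operator on its regular part, and the rescaled curvatures of $M$ converge to those of the cone. Hence outside a large compact set $K$, the smallest eigenvalue of $Rm$ is bounded below by a positive multiple of $r^{-2}$ (the natural scaling), and in particular $Rm\ge 0$ near infinity with explicit decay. Next, because the soliton is expanding ($\rho<0$), the scalar curvature satisfies $\Delta_f R = 2\rho R - 2|Rc|^2$; together with $R>0$ asymptotically and the structure of expanders one gets $R>0$ on all of $M$ (this uses that $R$ cannot attain an interior nonpositive minimum by the weighted maximum principle, since at such a point $\Delta_f R\le 0$ would force $2\rho R - 2|Rc|^2\ge 0$, impossible when $R\le 0$ and $\rho<0$ unless $Rc=0=R$, which one rules out using asymptotics and connectedness).

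The heart of the argument is propagating $Rm\ge 0$ from infinity into the interior. Here I would run Calabi's barrier maximum principle on the quantity measuring the failure of nonnegativity of the curvature operator: for $\varepsilon>0$ consider $u_\varepsilon := \lambda_{\min}(Rm) + \varepsilon\,\phi$, where $\phi$ is an auxiliary function (built from the potential $f$, which is proper and grows quadratically, or from $r^2$) chosen so that $u_\varepsilon$ is a supersolution of the appropriate elliptic operator $L$ obtained by linearizing the $Rm$-equation along the direction realizing the minimal eigenvalue; the null-eigenvector trick of Hamilton lets one treat $\lambda_{\min}(Rm)$ in the barrier sense, and the ODE system preserving the positive cone (valid because $W\equiv 0$) guarantees the zeroth-order term has the right sign so that $L u_\varepsilon\le 0$ where $u_\varepsilon$ is small. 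Since $u_\varepsilon>0$ near infinity (by the decay estimate above, after adjusting $\phi$) and on $\partial K$, Calabi's lemma forces $u_\varepsilon\ge 0$ on $M\setminus K$; on $K$ one uses that $u_\varepsilon$ cannot have an interior negative minimum. Letting $\varepsilon\to 0$ yields $Rm\ge 0$ on $M$. Finally, since $R>0$ somewhere and $Rm\ge 0$, the strong maximum principle (the rigidity clause in Lemma~\ref{lem:barrier-max}, or Hamilton's strong maximum principle for systems) applied to $\lambda_{\min}(Rm)$ forces either $\lambda_{\min}(Rm)>0$ everywhere, or the kernel of $Rm$ is a parallel distribution of positive rank --- but a nontrivial parallel kernel would split $M$ locally as a product, contradicting the asymptotically conical structure with a link of positive scalar curvature (the link cannot be a product in the relevant way) or contradicting $R>0$ via a de Rham splitting argument. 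This yields $Rm>0$.

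The main obstacle I anticipate is the construction of the barrier function $\phi$ and verifying that $L u_\varepsilon\le 0$ in the barrier sense uniformly: one must reconcile the decay rate $r^{-2}$ of the curvature with the growth of $f$ (or $r^2$), and control the error terms from $\nabla f\cdot\nabla\phi$ in the weighted Laplacian $\Delta_f$, so that the auxiliary term genuinely dominates near infinity while not destroying the sign of the reaction term in the compact region. A secondary subtlety is justifying that the ODE system governing the eigenvalues of $Rm$ when $W\equiv 0$ (equivalently, working with the Ricci eigenvalues, since $Rm$ is a Kulkarni--Nomizu expression in $Rc$ and $g$) indeed preserves nonnegativity of $Rm$ as a reaction term --- this is the locally-conformally-flat analogue of the fact used by Zhang \cite{Zhang:09}, and I would invoke the same algebraic computation. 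Once these two points are in place, the rigidity/splitting step is fairly standard.
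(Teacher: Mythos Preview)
Your overall architecture (asymptotic control at infinity plus interior strong maximum principle, then a splitting argument for strict positivity) matches the paper's, and your final step---ruling out a nontrivial parallel kernel of $Rm$ via de Rham splitting and the conical asymptotics---is essentially identical to what the paper does. However, two concrete points in your plan do not go through as stated, and the paper's proof is organized precisely to avoid them.

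First, your asymptotic claim is wrong: on the cone $(\mathcal C,g_c)$ with $W\equiv 0$ and positive scalar curvature, the smallest eigenvalue of $Rm$ is \emph{exactly zero} (all sectional curvatures involving the radial direction vanish; see Lemma~\ref{lem:W=0cone} and \eqref{eq:coneRm}). Hence the rescaled asymptotics only give $\lambda_{\min}(Rm)/R\to 0$ at infinity, \emph{not} $\lambda_{\min}(Rm)\ge c\,r^{-2}>0$; you cannot conclude $Rm\ge 0$ holds near infinity to feed into a barrier argument. Second, working directly with $u_\varepsilon=\lambda_{\min}(Rm)+\varepsilon\phi$ runs into the difficulty that the reaction term $(Rm^2+Rm^\sharp)(\omega,\omega)$ along the minimal eigenvector has no sign when $Rm$ is not already nonnegative; the algebraic fact from \cite{Zhang:09} that you want to invoke requires an a priori curvature condition. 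The paper resolves both issues simultaneously by replacing the barrier $u_\varepsilon$ with the scale-invariant \emph{ratio} $\lambda_1/R$ (and then $\mu/R$, with $\mu$ the smallest sectional curvature), and by proceeding in two steps: Lemma~\ref{lem:Delta_lambda1/R} gives $\Delta_F(\lambda_1/R)\le 0$ unconditionally (with $F=f-2\log R$), so one first gets $Rc\ge 0$; only then does Lemma~\ref{lem:Delta_mu} yield $\Delta_F(\mu/R)\le 0$, since the sign of the reaction term $E$ there requires $\lambda_1+\lambda_2\ge 0$. The ratio also makes the behavior at infinity transparent: on the cone $\tilde\lambda_1/\tilde R=0$, so if the infimum over $M$ were negative and attained at infinity one gets an immediate contradiction---no barrier function needed. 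This is exactly the obstacle you flagged, and dividing by $R$ is the device that removes it.
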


First of all, since the asymptotic cone of $(M^n,g,f)$ has positive scalar curvature, it follows from the strong maximum principle and \cite[Theorem 1.6]{Chan:23} that $(M^n,g,f)$ itself has positive scalar curvature $R>0$. This fact will be used in the proofs of Theorem \ref{thm:ac_Wflat} and Theorem \ref{thm:ac-3D}.

Next, we shall need the following differential inequality for the ratio between the smallest Ricci eigenvalue $\lambda_1$ and the scalar curvature $R$.

\begin{lem}[\cite{Eminenti-LNM:08,Petersen-W:10}] \label{lem:Delta_lambda1/R}
	Let $(M^n,g,f)$, $n\geq 3$, be an $n$-dimensional gradient Ricci soliton with vanishing Weyl tensor $W \equiv 0$ and positive scalar curvature $R>0$. Then, in the barrier sense, we have
	\begin{equation*}
		\Delta_F \left(\frac{\lambda_1}{R}\right) 
		\leq \frac{2h_1}{(n-1)(n-2)R^2} \leq 0,
		\qquad (F = f - 2\log R)
	\end{equation*}
	where
	\begin{equation*}
		\begin{split}
			h_1 &= (n-2)\lambda_1^2 (n\lambda_1 - R) + \bigl( (n-2)\lambda_1 - R \bigr)
			\Biggl( (n-1)\sum_{j=2}^n \lambda_j^2 - \Bigl(\sum_{j=2}^n \lambda_j\Bigr)^2 \Biggr).
		\end{split}
	\end{equation*}
\end{lem}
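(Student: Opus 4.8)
This is a computational statement of the type worked out for locally conformally flat gradient Ricci solitons in \cite{Eminenti-LNM:08, Petersen-W:10}, and the plan is to reproduce that computation, being careful with the barrier-sense bookkeeping. Fix $p\in M$ and work in an orthonormal frame at $p$ diagonalizing the Ricci tensor, $R_{ij}=\lambda_i\delta_{ij}$ with $\lambda_1\le\cdots\le\lambda_n$ and $\sum_i\lambda_i=R$; in particular $e_1$ is a unit $\lambda_1$-eigenvector. From the contracted second Bianchi identity together with \eqref{eq:Riccisoliton} one has the standard soliton identities
\[
\Delta_f R_{ij}=2\rho R_{ij}-2R_{ikjl}R_{kl},\qquad \Delta_f R=2\rho R-2|Rc|^2,
\]
the second being the trace of the first (cf. Lemma \ref{lem:4Dequations}). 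Since $W\equiv 0$, the full curvature is an algebraic expression in $Rc$ and $R$; the Weyl-free curvature decomposition gives $R_{1k1k}=\tfrac{\lambda_1+\lambda_k}{n-2}-\tfrac{R}{(n-1)(n-2)}$ for $k\neq 1$ and $R_{1111}=0$, hence
\[
R_{1k1l}R_{kl}=\sum_{k}R_{1k1k}\lambda_k=\frac{\lambda_1(R-\lambda_1)+\sum_{j=2}^n\lambda_j^2}{n-2}-\frac{R(R-\lambda_1)}{(n-1)(n-2)}.
\]

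Next, $\lambda_1=\min_{|v|=1}Rc(v,v)$ is a minimum of smooth functions, so by the standard eigenvalue-perturbation computation (Calabi's device where $\lambda_1$ fails to be simple) it satisfies, in the barrier sense at $p$,
\[
\Delta_f\lambda_1\le(\Delta_f R_{ij})\,e_1^ie_1^j=2\rho\lambda_1-2R_{1k1l}R_{kl},
\]
the perturbation terms being $\le 0$ because $\lambda_1-\lambda_j\le 0$. The role of the drift $F=f-2\log R$ is that, since $R>0$, for any smooth $\phi$ the quotient rule combined with $\Delta_F=\Delta_f+\tfrac{2}{R}\nabla R\cdot\nabla$ yields the exact cancellation of the gradient cross-term:
\[
\Delta_F\!\Big(\frac{\phi}{R}\Big)=\frac{\Delta_f\phi}{R}-\frac{\phi\,\Delta_f R}{R^2}.
\]
Applying this to the smooth upper barriers of $\lambda_1$ (here $R>0$ is what keeps the barrier inequality in the right direction under division) and inserting the two identities above, we obtain in the barrier sense
\[
\Delta_F\!\Big(\frac{\lambda_1}{R}\Big)\le\frac{2\rho\lambda_1-2R_{1k1l}R_{kl}}{R}-\frac{\lambda_1(2\rho R-2|Rc|^2)}{R^2}=-\frac{2R_{1k1l}R_{kl}}{R}+\frac{2\lambda_1|Rc|^2}{R^2},
\]
the $\rho$-terms cancelling. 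Substituting the formula for $R_{1k1l}R_{kl}$, using $\sum_{j=2}^n\lambda_j=R-\lambda_1$ and $|Rc|^2=\lambda_1^2+\sum_{j=2}^n\lambda_j^2$, and clearing denominators, the right-hand side collapses to $\tfrac{2h_1}{(n-1)(n-2)R^2}$ with $h_1$ exactly as stated; this last step is a routine polynomial identity.

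It remains to check $h_1\le 0$. Since $\lambda_j\ge\lambda_1$ for every $j$, we have $n\lambda_1-R\le 0$, so the first summand $(n-2)\lambda_1^2(n\lambda_1-R)$ of $h_1$ is $\le 0$. By the Cauchy--Schwarz (power-mean) inequality applied to the $n-1$ numbers $\lambda_2,\dots,\lambda_n$, $(n-1)\sum_{j=2}^n\lambda_j^2-\big(\sum_{j=2}^n\lambda_j\big)^2\ge 0$; and $(n-2)\lambda_1-R=(n\lambda_1-R)-2\lambda_1\le-2\lambda_1$, which is $\le 0$ if $\lambda_1\ge 0$, while if $\lambda_1<0$ then $(n-2)\lambda_1-R<0$ outright because $R>0$. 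Hence the second summand of $h_1$ is also $\le 0$, so $h_1\le 0$ as claimed.

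\noindent\textbf{Main obstacle.} The one genuinely delicate point is the passage to the barrier sense: justifying the differential inequality for the lowest Ricci eigenvalue $\lambda_1$ via Calabi's device, and checking that it is preserved under division by the positive function $R$ and the change of drift from $\Delta_f$ to $\Delta_F$. Everything afterwards --- the Weyl-free substitution, the cancellation of the $\rho$-terms, and the final polynomial identity and sign analysis --- is mechanical.
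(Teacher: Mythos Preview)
Your proof is correct and follows essentially the same route as the paper's: construct an upper barrier for $\lambda_1$ by parallel-transporting a $\lambda_1$-eigenvector from the base point, apply the soliton identity $\Delta_f R_{ij}=2\rho R_{ij}-2R_{ikjl}R_{kl}$ with the Weyl-free curvature formula, pass to $\Delta_F(\lambda_1/R)$ via the drift change $F=f-2\log R$ (which kills the gradient cross-terms exactly as you note), and simplify to $h_1$. Your sign argument for $h_1\le 0$ is more explicit than the paper's (which simply declares it ``clear''), but the content is the same.
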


For the reader's convenience, we also include a proof here.

\begin{proof}
	Let $\lambda_1 \leq \lambda_2 \leq \cdots \leq \lambda_n$ be the Ricci eigenvalues. Fix any point $p_0 \in M$, and choose a unit vector field $e_1$ such that $Rc(e_1) = \lambda_1 e_1$ at $p_0$. Extend $e_1$ by parallel transport along geodesics emanating from $p_0$. Clearly, $\lambda_1 \leq Rc(e_1,e_1)$ with equality at $p_0$. Evaluating at $p_0$, we obtain, in the barrier sense (cf. Calabi \cite{Calabi:58}),
	\begin{equation} \label{eq:Delta_lambda1}
		\begin{split}
			\Delta_f \lambda_1 
			&\leq \Delta_f Rc(e_1,e_1) \\
			&= (\Delta_f Rc)(e_1,e_1) \\
			&= 2\rho \lambda_1 
			- \frac{2nR}{(n-1)(n-2)} \lambda_1
			+ \frac{4}{n-2} \lambda_1^2
			- \frac{2}{n-2}\left(|Rc|^2 - \frac{R^2}{n-1}\right),
		\end{split}
	\end{equation}
	where in the last equality, we have used \cite[(2.8)]{Eminenti-LNM:08} (see also \cite[Lemma 2.5]{Petersen-W:10}).
	
	On the other hand,  we have
	\begin{equation}\label{eq:quotient}
		\Delta_f\left(\frac{\lambda_1}{R}\right)
		= \frac{1}{R}\Delta_f \lambda_1
		- \frac{\lambda_1}{R^2}\Delta_f R
		- \frac{2}{R^2}\langle \nabla \lambda_1, \nabla R \rangle
		+ \frac{2\lambda_1}{R^3}|\nabla R|^2.
	\end{equation}
	Let $F = f - 2\log R$. Substituting \eqref{eq:Delta_lambda1} and the formula for $\Delta_f R$ in Lemma \ref{lem:4Dequations} into \eqref{eq:quotient}, we obtain
	\begin{equation*}\label{eq:ineq-simplified}
		\Delta_F \left(\frac{\lambda_1}{R}\right) \leq \frac{2h_1}{(n-1)(n-2)R^2},
	\end{equation*}
	where
	\begin{equation}\label{eq:h-def}
		h_1 = R^2(R-n\lambda_1)
		+(n-1)\Big(2\lambda_1^2 R + \big((n-2)\lambda_1 - R\big)|Rc|^2\Big).
	\end{equation}
	
	Now set 
	\begin{equation*} 
		S = \sum_{j=2}^n \lambda_j, 		\qquad  T = \sum_{j=2}^n \lambda_j^2	
	\end{equation*}
	so that 
	\begin{equation*}
		R = \lambda_1 + S, 
		\qquad |Rc|^2 = \lambda_1^2 + T. 
	\end{equation*}
	Substituting these expressions into \eqref{eq:h-def}, expanding and regrouping, we obtain
	\begin{equation*}
		\begin{split}
			h_1 &= S^2\big[S+(3-n)\lambda_1\big] + (2-n)\lambda_1^2 \big[S - (n-1)\lambda_1\big]\\
			&\quad + (n-1)T\big((n-3)\lambda_1 - S\big) \\
			&= (n-2)\lambda_1^2(n\lambda_1 - R) 
			+ \big[(n-2)\lambda_1 - R\big]\big[(n-1)T - S^2\big].
		\end{split}
	\end{equation*}
	Finally, it is clear from the above expression that $h_1 \leq 0$. 
\end{proof}

\medskip
Now, we divide the proof of Theorem \ref{thm:ac_Wflat} into three parts. 

\bigskip
\noindent {\bf Part I:} $(M^n, g, f)$ has nonnegative Ricci curvature $Rc\geq 0$.

\begin{proof}  [Proof of $Rc\geq 0$]
	As in \cite{Spruck-Xiao:20,Xie-Yu:23, Xie:25}, we argue by contradiction.  
	Suppose $Rc\ngeq 0$. Then the set
	$$ M^- := \{ p \in M : \lambda_1(p) < 0 \} $$
	is nonempty, hence
	\begin{equation*} \label{eq:epsilon_1<0}
		\epsilon_1 := \inf_{M}\left( \frac{\lambda_1}{R} \right) < 0.
	\end{equation*}
	
	\medskip
	\noindent\textbf{Case 1: Interior infimum.}  
	Suppose the negative infimum $\epsilon_1$ is attained at some point $p_0 \in M$. 
	By Lemma \ref{lem:Delta_lambda1/R} and Calabi's barrier strong maximum principle (Lemma \ref{lem:barrier-max}), 
	the ratio $\lambda_1/R$ must be constant on some bounded connected domain $p_0\in \Omega\subset M^-$. 
	In particular, $h_1 \equiv 0$ on $\Omega$ which forces
	\begin{equation} \label{eq:h=0}
		\lambda_1^2 (n\lambda_1 - R) \equiv 0, 
		\qquad
		\bigl((n-2)\lambda_1-R \bigr)\Biggl((n-1) \sum_{j=2}^n \lambda_j^2 - \Bigl(\sum_{j=2}^n \lambda_j\Bigr)^2\Biggr) \equiv 0.
	\end{equation}
	
	Since $\lambda_1<0$ at $p_0$ by assumption, from the first equation in \eqref{eq:h=0}, we must have $R=n\lambda_1<0$ at $p_0$, contradicting the fact that $R>0$ on $M$. Hence, \textbf{Case 1} is ruled out.
	
	\medskip
	\noindent\textbf{Case 2: Infimum at infinity.}  
	Assume instead that $\lambda_1/R$ attains its negative infimum at infinity. Following the argument of \cite{Xie:25}, there exists a sequence $\{p_i\}\subset M$ with $p_i \to \infty$ such that
	\begin{equation} \label{eq:lim_epsilon_1<0}
		\lim_{i\to\infty} \frac{\lambda_1}{R}(p_i) = \epsilon_1 < 0.
	\end{equation}
	Since the asymptotic cone $\mathcal{C}$ of the expanding soliton has positive scalar curvature, the ratio $\tilde{\lambda}_1/\tilde{R}$ is well defined on $\mathcal{C}$, where $\tilde{\lambda}_1$ and $\tilde{R}$ denote its smallest Ricci eigenvalue and scalar curvature, respectively. For the sequence $\{p_i\}\subset M$, one can find a corresponding sequence $\{\tilde{p}_i\} \subset \mathcal{C}$ such that
	\begin{equation*}
		\lim_{i\to\infty} \frac{\lambda_1}{R}(p_i)
		= \lim_{i\to\infty} \frac{\tilde{\lambda}_1}{\tilde{R}}(\tilde{p}_i).
	\end{equation*}
	
	On the other hand, since the expanding Ricci soliton satisfies $W \equiv 0$, its asymptotic cone $\mathcal{C}$ also has vanishing Weyl tensor. By Lemma \ref{lem:3Dcone} (for $n=3$) and Lemma \ref{lem:W=0cone} (for $n\ge 4$), $\mathcal{C}$ has nonnegative Ricci curvature, whose smallest eigenvalue $\tilde{\lambda}_1\equiv 0$ occurs in the radial direction. Hence
	\begin{equation*}
		\lim_{i\to\infty} \frac{\lambda_1}{R}(p_i) = 0,
	\end{equation*}

	\noindent contradicting \eqref{eq:lim_epsilon_1<0}. Thus, \textbf{Case 2} is also impossible.
	
	\smallskip
	Combining both cases, we conclude that $M^-=\emptyset$.  
	Therefore, $(M^n, g, f)$ has nonnegative Ricci curvature.
\end{proof}

\noindent {\bf Part II:} $(M^n, g, f)$ has nonnegative curvature operator $Rm\ge 0$.

\smallskip
First of all, when the Weyl tensor $W$ vanishes, we may express the smallest sectional curvature as (see, e.g., \cite[Proposition 3.1]{Zhang:09})
\begin{equation*}
	\mu := \min_{i,j} R_{ijij} = R_{1212} 
	= \frac{1}{n-2} \big[\lambda_1 + \lambda_2 - \frac{R}{n-1} \big],
\end{equation*}
where, as before, $\lambda_1 \leq \lambda_2 \leq \cdots \leq \lambda_n$ are the Ricci eigenvalues. 

\begin{rmk}
	We also note that, for any manifold with vanishing Weyl tensor, $\mu$ also coincides with the smallest eigenvalue of the curvature operator $Rm$ (see, e.g., \cite[Proposition 3.1(i)]{Zhang:09}). In particular, when $W=0$, nonnegative sectional curvature is equivalent to nonnegative curvature operator $Rm\geq 0$.
\end{rmk}

Next, we derive the following elliptic differential inequality for the smallest sectional curvature in the barrier sense.

\begin{lem} \label{lem:Delta_mu}
	Let $(M^n,g,f)$, $n \geq 3$, be an $n$-dimensional gradient Ricci soliton with vanishing Weyl tensor $W \equiv 0$ and positive scalar curvature $R>0$. Then, in the barrier sense, we have
	\begin{equation*}
		(n-2)\Delta_F \left(\frac{\mu}{R}\right) = \Delta_F \left(\frac{\lambda_1+\lambda_2}{R}\right) 
		\leq \frac{2E}{(n-1)(n-2)R^2},
		\qquad (F = f - 2\log R)
	\end{equation*}
	where
	\begin{equation*} \label{eq:E}
		\begin{split}
			E &=  -(n-2)\lambda_1^2 \sum_{j=3}^n(\lambda_j - \lambda_1) - (n-2)\lambda_2^2 \sum_{j=3}^n(\lambda_j - \lambda_2)\\
			&\quad - \bigl(R-(n-2)\lambda_1\bigr)
			\sum_{\substack{i>j \\ i,j \neq 1}} (\lambda_i - \lambda_j)^2 - \Biggl( \sum_{j=3}^n\bigl(\lambda_j - \lambda_2\bigr) \Biggr)
			\sum_{\substack{i>j \\ i,j \neq 2}} (\lambda_i - \lambda_j)^2 \\
			 &\quad - (\lambda_1+\lambda_2) \sum_{j=3}^{n} (\lambda_2 + \lambda_j - 2\lambda_1)(\lambda_j- \lambda_2) -(\lambda_1+\lambda_2) \sum_{\substack{i>j \geq 3}} (\lambda_i - \lambda_j)^2.
		\end{split}
	\end{equation*}
	Moreover, if the Ricci curvature is $2$-nonnegative then $E\leq 0$.
\end{lem}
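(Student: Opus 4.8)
The plan is to adapt, almost verbatim, the proof of Lemma \ref{lem:Delta_lambda1/R}, replacing $\lambda_1$ with the sum $\lambda_1+\lambda_2$ of the two smallest Ricci eigenvalues. I would first record the trivial reduction: since $\mu=\frac{1}{n-2}\bigl[\lambda_1+\lambda_2-\frac{R}{n-1}\bigr]$, we have $(n-2)\frac{\mu}{R}=\frac{\lambda_1+\lambda_2}{R}-\frac{1}{n-1}$, so $(n-2)\Delta_F(\mu/R)=\Delta_F\bigl(\frac{\lambda_1+\lambda_2}{R}\bigr)$ and it suffices to estimate the latter. Then fix $p_0\in M$, choose an orthonormal frame $\{e_1,\dots,e_n\}$ diagonalizing $Rc$ at $p_0$ with $Rc(e_i)=\lambda_i e_i$, and extend $e_1,e_2$ to a neighborhood of $p_0$ by parallel transport along geodesics emanating from $p_0$. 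By the variational (min--max) characterization of eigenvalue sums, $\lambda_1+\lambda_2\le Rc(e_1,e_1)+Rc(e_2,e_2)=:\phi$ near $p_0$, with equality at $p_0$; since $R>0$, the smooth function $\phi/R$ is then an upper barrier for $(\lambda_1+\lambda_2)/R$ at $p_0$, and in Calabi's barrier sense $\Delta_F\bigl(\frac{\lambda_1+\lambda_2}{R}\bigr)\le\Delta_F(\phi/R)$ at $p_0$.

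The heart of the argument is to compute $\Delta_F(\phi/R)$ at $p_0$. As in the proof of Lemma \ref{lem:Delta_lambda1/R} (apply the quotient rule for $\Delta_f$, cf.\ \eqref{eq:quotient}, and use $F=f-2\log R$), one obtains the identity $\Delta_F(u/R)=\frac{1}{R}\Delta_f u-\frac{u}{R^2}\Delta_f R$ at any point where $u/R$ is smooth; with $u=\phi$, and using that the frame is parallel at $p_0$ together with $\phi(p_0)=\lambda_1+\lambda_2$, this gives $\Delta_F(\phi/R)=\frac{1}{R}\bigl[(\Delta_f Rc)(e_1,e_1)+(\Delta_f Rc)(e_2,e_2)\bigr]-\frac{\lambda_1+\lambda_2}{R^2}\Delta_f R$ at $p_0$. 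Since $W\equiv 0$ (the derivation in Lemma \ref{lem:Delta_lambda1/R} used only that $e_1$ is a unit Ricci eigenvector and $W\equiv 0$), the formula
\[
(\Delta_f Rc)(e_a,e_a)=2\rho\lambda_a-\frac{2nR}{(n-1)(n-2)}\lambda_a+\frac{4}{n-2}\lambda_a^2-\frac{2}{n-2}\Bigl(|Rc|^2-\frac{R^2}{n-1}\Bigr)
\]
holds for every Ricci eigendirection $e_a$; plugging it in for $a=1,2$ and using $\Delta_f R=2\rho R-2|Rc|^2$ from Lemma \ref{lem:4Dequations}, the terms involving $\rho$ cancel, and after multiplying through by $\frac{(n-1)(n-2)R^2}{2}$ one is reduced to the polynomial estimate
\[
\frac{(n-1)(n-2)R^2}{2}\,\Delta_F\!\Bigl(\frac{\lambda_1+\lambda_2}{R}\Bigr)\le 2R^3-nR^2(\lambda_1+\lambda_2)+2(n-1)R(\lambda_1^2+\lambda_2^2)-2(n-1)R|Rc|^2+(n-1)(n-2)(\lambda_1+\lambda_2)|Rc|^2.
\]
It then remains to substitute $R=\sum_{j=1}^n\lambda_j$ and $|Rc|^2=\sum_{j=1}^n\lambda_j^2$ into the right-hand side and regroup it into the claimed expression for $E$. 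I expect this final regrouping to be the main obstacle: it is the two-eigenvalue analogue of the computation closing the proof of Lemma \ref{lem:Delta_lambda1/R}, but substantially more intricate, since singling out both $\lambda_1$ and $\lambda_2$ generates many more cross-terms. The efficient way to organize it is to write $R=\lambda_1+\lambda_2+S$ and $|Rc|^2=\lambda_1^2+\lambda_2^2+T$ with $S=\sum_{j=3}^n\lambda_j$, $T=\sum_{j=3}^n\lambda_j^2$, expand, and match the result against the six sums appearing in $E$ (alternatively, one may verify the polynomial identity directly by comparing coefficients of monomials in the $\lambda_j$).

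Finally, the sign assertion follows by inspection of the displayed form of $E$ once we assume $Rc$ is $2$-nonnegative, i.e.\ $\lambda_1+\lambda_2\ge 0$: each of the six sums in $E$ is the product of a nonnegative coefficient with a nonnegative sum, carrying an overall minus sign. Indeed $\lambda_j-\lambda_i\ge 0$ for $j>i$ and $(\lambda_i-\lambda_j)^2\ge 0$; moreover $R-(n-2)\lambda_1=(\lambda_1+\lambda_2)+\sum_{j=3}^n(\lambda_j-\lambda_1)\ge 0$, $\sum_{j=3}^n(\lambda_j-\lambda_2)\ge 0$, and $\lambda_2+\lambda_j-2\lambda_1=(\lambda_2-\lambda_1)+(\lambda_j-\lambda_1)\ge 0$ for $j\ge 3$. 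Hence every term of $E$ is $\le 0$, so $E\le 0$, which completes the proof. (As in Lemma \ref{lem:Delta_lambda1/R}, all statements about $\lambda_1+\lambda_2$ and $\mu$ are understood in the barrier sense, since these quantities are only Lipschitz where eigenvalue multiplicities change.)
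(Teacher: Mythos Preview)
Your proposal is correct and follows essentially the same approach as the paper: the barrier construction, the use of the $W\equiv 0$ formula for $(\Delta_f Rc)(e_a,e_a)$, the quotient identity $\Delta_F(u/R)=\tfrac{1}{R}\Delta_f u-\tfrac{u}{R^2}\Delta_f R$, and the sign analysis of $E$ under $\lambda_1+\lambda_2\ge 0$ are all identical. Your polynomial expression for the right-hand side is exactly the paper's $E=h_1+h_2$ (in the form \eqref{eq:h-def} summed for $i=1,2$); the only cosmetic difference is that the paper organizes the final regrouping by writing $E=h_1+h_2$, applying the identity $(n-1)\sum_{j\neq i}\lambda_j^2-(\sum_{j\neq i}\lambda_j)^2=\sum_{k>l,\,k,l\neq i}(\lambda_k-\lambda_l)^2$, and then splitting the coefficient $(n-2)\lambda_2-R=-(\lambda_1+\lambda_2)-\sum_{j\ge 3}(\lambda_j-\lambda_2)$, whereas you propose the $S,T$ substitution---either route verifies the displayed form of $E$.
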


\begin{proof}
	 First of all, since $\mu = \frac{1}{n-2} \big(\lambda_1 + \lambda_2 - \frac{R}{n-1} \big)$,  we have
	\begin{equation*}
	 	(n-2)\Delta_F \left(\frac{\mu}{R}\right) 
	 	= \Delta_F \left(\frac{\lambda}{R}\right),
	 \end{equation*}
	 where $F = f - 2\log R$ and $\lambda=\lambda_1+\lambda_2$. 

	Next, we proceed to compute $\Delta_F (\lambda/R)$. To begin with, for any fixed point $p_0 \in M$,  let $\{e_i\}$ be an orthonormal basis that diagonalizes the Ricci tensor so that $Rc(e_i) = \lambda_i e_i$ with $\lambda_1 \leq \cdots \leq \lambda_n$. We then extend $\{e_i\}$ to a local orthonormal frame in the neighborhood $U$ of $p_0$ by parallel transport along radial geodesics emanating from $p_0$. The resulting local orthonormal frame in $U$, denoted by $\{\tilde{e}_i\}$, satisfy
	\begin{equation} \label{eq:parallel}
		\nabla \tilde{e}_i(p_0) = \Delta \tilde{e}_i(p_0) = 0.
	\end{equation}
	
	Now we define
	\begin{equation*}
		\tilde{\lambda}(p) =
		\big( Rc(\tilde{e}_1,\tilde{e}_1) + Rc(\tilde{e}_2,\tilde{e}_2) \big)(p).
	\end{equation*}
	On the other hand, for any $p \in M$, it is clear that
	\begin{equation*}
		(\lambda_1 + \lambda_2)(p)
		= \inf \Big\{ Rc(\tau_1,\tau_1) + Rc(\tau_2,\tau_2) 
		\Big| \{\tau_i\} \text{ is an orthonormal basis of } T_pM \Big\}.
	\end{equation*}
	Consequently, at any $p\in U$, we have 
	\begin{equation*} \label{eq:u-tilde-bound}
		\tilde{\lambda}(p) \geq \lambda(p) = \lambda_1(p) + \lambda_2(p),
	\end{equation*}
	with equality at $p_0$, i.e., $\tilde{\lambda}$ is a barrier function for $\lambda$.
	
	Evaluating at $p_0$ in the barrier sense, using \eqref{eq:parallel} and \cite[(2.8)]{Eminenti-LNM:08} (see also \cite[Lemma 2.5]{Petersen-W:10}), we obtain
	\begin{equation*} \label{eq:Delta_lambda}
		\begin{split}
			\Delta_f \lambda & = \Delta_f (\lambda_1 +\lambda_2) \\
			&\leq \Delta_f \big( Rc(\tilde{e}_1,\tilde{e}_1) + Rc(\tilde{e}_2,\tilde{e}_2)\big) \\
			&= (\Delta_f Rc)(\tilde{e}_1,\tilde{e}_1) + (\Delta_f Rc)(\tilde{e}_2,\tilde{e}_2) \\
			&= 2\rho \lambda_1 
			- \frac{2nR}{(n-1)(n-2)} \lambda_1
			+ \frac{4}{n-2} \lambda_1^2
			- \frac{2}{n-2}\left( |Rc|^2 - \frac{R^2}{n-1} \right) \\
			&\quad + 2\rho \lambda_2 
			- \frac{2nR}{(n-1)(n-2)} \lambda_2
			+ \frac{4}{n-2} \lambda_2^2
			- \frac{2}{n-2}\left( |Rc|^2 - \frac{R^2}{n-1} \right).
		\end{split}
	\end{equation*}
	
	Let $F = f - 2\log R$. By direct computations as in the proof of Lemma \ref{lem:Delta_lambda1/R} and using the above equation, we obtain
	\begin{equation*}
		\begin{split}
			\Delta_F \left(\frac{\lambda}{R}\right)
			&= \frac{1}{R}\Delta_f \lambda - \frac{\lambda}{R^2}\Delta_f R \\
            &= \frac{1}{R}\Delta_f (\lambda_1+\lambda_2) 
            - \frac{\lambda_1+\lambda_2}{R^2}\Delta_f R \\
			&\leq \frac{2E}{(n-1)(n-2)R^2},
		\end{split}
	\end{equation*}
	where $E=h_1+h_2$ and, for $i=1, 2$,  
	\begin{equation*}
    	h_i = (n-2)\lambda_i^2 (n\lambda_i - R) + \bigl((n-2)\lambda_i - R \bigr)
 		\Biggl( (n-1)\sum_{j\neq i}^n \lambda_j^2 - \Bigl(\sum_{j\neq i}^n \lambda_j\Bigr)^2 \Biggr).
	\end{equation*}
	
	It remains to express $E$ in the form as given in Lemma \ref{lem:Delta_mu}. For this purpose, we rewrite 
	\[ n\lambda_1 - R= -(\lambda_2-\lambda_1) - \sum_{j=3}^n (\lambda_j-\lambda_1),\]
	\[ n\lambda_2 - R= (\lambda_2-\lambda_1) - \sum_{j=3}^n (\lambda_j-\lambda_2),\]
	and observe that 
	\begin{equation*}
		(n-1)\sum_{j\neq 1}^n \lambda_j^2
		- \Bigl(\sum_{j\neq 1}^n \lambda_j\Bigr)^2
		= \sum_{\substack{i>j \\ i,j \neq 1}} (\lambda_i - \lambda_j)^2,
	\end{equation*}
	\begin{equation*}
		(n-1)\sum_{j\neq 2}^n \lambda_j^2
		- \Bigl(\sum_{j\neq 2}^n \lambda_j\Bigr)^2
		= \sum_{\substack{i>j \\ i,j \neq 2}} (\lambda_i - \lambda_j)^2.
	\end{equation*}

	By substituting the above identities into $E=h_1+h_2$, we get
	\begin{equation} \label{eq:Esimplify}
		\begin{split}
			E & =  (n-2) (\lambda_2-\lambda_1)^2 (\lambda_1+\lambda_2) 
       - (n-2)\lambda_1^2 \sum_{j=3}^n (\lambda_j - \lambda_1) \\
			&\quad - (n-2)\lambda_2^2 \sum_{j=3}^n (\lambda_j - \lambda_2) + \bigl((n-2)\lambda_1 - R \bigr)
			\sum_{\substack{i>j \\ i,j \neq 1}} (\lambda_i - \lambda_j)^2 \\
			&\quad + \bigl((n-2)\lambda_2 - R \bigr)
			\sum_{\substack{i>j \\ i,j \neq 2}} (\lambda_i - \lambda_j)^2.
	\end{split}
	\end{equation}

	One can notice that every term, except possibly the first one and the last one, in the above expression \eqref{eq:Esimplify} of $E$ is nonpositive. On the other hand, the last term can be rewritten as 
	\begin{equation}\label{eq:Elastterm}
		\begin{split}
			\bigl((n-2)\lambda_2 - R \bigr) \sum_{\substack{i>j \\ i,j \neq 2}} (\lambda_i - \lambda_j)^2 
			& = -\Bigg(\lambda_1+\lambda_2 +\sum_{j=3}^n (\lambda_j - \lambda_2)\Bigg) \sum_{\substack{i>j \\ i,j \neq 2}} (\lambda_i - \lambda_j)^2 \\
			& = - (\lambda_1+\lambda_2)\sum_{\substack{i>j \\ i,j \neq 2}} (\lambda_i - \lambda_j)^2 \\
			&\quad - \Biggl( \sum_{k=3}^n (\lambda_k- \lambda_2) \Biggr)
			\sum_{\substack{i>j \\ i,j \neq 2}} (\lambda_i - \lambda_j)^2.
		\end{split}
	\end{equation}
	Combining \eqref{eq:Esimplify} and  \eqref{eq:Elastterm}, we arrive at the desired expression of $E$. Also, it is clear $E\leq 0$ when $\lambda_1+\lambda_2\geq 0$, i.e., the Ricci curvature is 2-nonnegative. This finishes the proof of the lemma. 
\end{proof}

Now, we are ready to complete the proof that  $(M^n,g,f)$ has $Rm\ge 0$.

\begin{proof} [Proof of $Rm\ge 0$]
	First, we claim that $(M^n,g,f)$ must have nonnegative sectional curvature. Suppose not. Then, as before, we argue by contradiction. Assume the claim fails. Then the set
	\begin{equation*}
		M^- := \{ p \in M : \mu(p) < 0 \}
	\end{equation*}
	is nonempty, where
	\begin{equation*}
	\mu = \min_{i,j} R_{ijij} = R_{1212}
	= \frac{1}{n-2} \big[\lambda_1 + \lambda_2 - \frac{R}{n-1} \big]
	\end{equation*}
	is the smallest sectional curvature. Moreover,
	\begin{equation} \label{eq:epsilon_0}
		\epsilon_0 := \inf_{M}\left( \frac{\mu}{R} \right) < 0 .
	\end{equation}
	
	\smallskip
	\noindent\textbf{Case 1: Interior infimum.}
	Suppose the negative infimum $\epsilon_0$ is attained at some point $p_0 \in M$. By {\bf Part I} of the proof above, we know that the Ricci curvature is nonnegative. Then, by Lemma \ref{lem:Delta_mu}, we have
	\begin{equation*}
		(n-2)\Delta_F \left(\frac{\mu}{R}\right) 
		= \frac{2E}{(n-1)(n-2)R^2} \leq 0.
		\qquad (F = f - 2\log R)
	\end{equation*}
	By Calabi's barrier maximum principle (Lemma \ref{lem:barrier-max}), the ratio $\mu/R$ must be constant on some bounded connected domain $p_0\in \Omega\subset M^-$. Thus, we have $E \equiv 0$ on $\Omega$ which, by the expression of $E$ in Lemma \ref{lem:Delta_mu}, forces
	\begin{equation}\label{eq:E=0-terms}
		\begin{split}
			(n-2)\lambda_2^2 \sum_{j=3}^n (\lambda_2 - \lambda_j) &\equiv 0, \\[0.3em]
			(n-2)\lambda_1^2 \sum_{j=3}^n (\lambda_1 - \lambda_j) &\equiv 0, \\[0.3em]
			\bigl((n-2)\lambda_1 - R \bigr)
			\sum_{\substack{i>j \\ i,j \neq 1}} (\lambda_i - \lambda_j)^2 &\equiv 0.
		\end{split}
	\end{equation}
	
	From the first identity in \eqref{eq:E=0-terms}, either $\lambda_2 = 0$ or $\lambda_2 = \lambda_j$ for all $j\ge 3$.
	
	\medskip
	$\bullet$ {Subcase 1a: $\lambda_2=0$ at $p_0$.}
	By {\bf Part I}, we have $\Ric \ge 0$, which implies $\lambda_1(p_0)=0$.
	Then, the third relation in \eqref{eq:E=0-terms} yields $\lambda_i=\lambda_j$ at $p_0$ for all $i,j\ge 2$, hence $\lambda_i(p_0) = 0$ for all $i$, contradicting the fact that the scalar curvature $R$ is positive.
	
	\medskip
	$\bullet$ {Subcase 1b: $\lambda_2 = \lambda_j$, at $p_0$, for all $j \ge 3$.}
	The second relation in \eqref{eq:E=0-terms} implies,  at $p_0$,  either $\lambda_1=0$ or
	$\lambda_1=\lambda_j$  for $j\ge 3$.
	If $\lambda_1=\lambda_j$ at $p_0$ for all $j\ge 3$, then
	\begin{equation*}
	\lambda_1(p_0) = \lambda_2(p_0) = \lambda_j(p_0) \qquad (j\ge 3).
	\end{equation*}
	So at $p_0$, we have $0>\lambda_1(p_0) = \cdots =\lambda_n(p_0)$ contradicting $R>0$.
	Suppose instead that $\lambda_1(p_0)=0$ and $\lambda_2(p_0)=\lambda_j(p_0)$ for all $j\ge 3$. Then, at $p_0$, we have
	\begin{equation*}
	\mu= \frac{1} {n-2} \big[\lambda_1 + \lambda_2 - \frac{R}{(n-1)}\big] =0,
	\end{equation*}
	which is a contradiction to \eqref{eq:epsilon_0}. Therefore, \textbf{Case 1} is impossible.
	
	\medskip
	\noindent\textbf{Case 2: Infimum at infinity.} 
	Suppose $\mu/R$ attains its negative infimum at infinity. Then, by Lemma \ref{lem:3Dcone} and Lemma \ref{lem:W=0cone} and essentially the same argument as in the proof of $Rc\geq 0$ in {\bf Part I}, we can rule out {\bf Case 2}. We omit the details.  
	
	\medskip
	Hence, we conclude that $M^-=\emptyset$. Therefore, $M$ has nonnegative sectional curvature.   
	As nonnegative sectional curvature is equivalent to nonnegative curvature operator for manifolds with vanishing Weyl tensor $W=0$, it follows that the expanding soliton $(M, g, f)$ has nonnegative curvature operator $Rm\geq 0$. In fact, $\mu$ coincides with the smallest eigenvalue of the curvature operator $Rm$ (see, e.g., \cite[Proposition 3.1(i)]{Zhang:09}).
\end{proof}

\medskip
\noindent {\bf Part III:} $(M^n, g, f)$ has positive curvature operator $Rm > 0$.		
	
\begin{proof} [Proof of $Rm > 0$]
	To begin with, following the proof of \cite[Theorem 1.3]{Cao-Xie:25}, we observe that 
	the null space of the curvature operator, $\ker(Rm)$, is invariant under parallel translation. 
	Indeed, consider the canonical immortal solution 
	\[
	g(t) = (1+t)\Phi(t)^{\ast}(g), \quad -1<t< \infty,
	\]
	to the Ricci flow induced by the expanding soliton $(M^n,g,f)$, with $g(0)=g$ for $t \in [0,1]$.  
	By the evolution equation of the curvature operator (see \cite{Ha:86})
	\[
	\partial_t Rm(t) = \Delta Rm(t) + 2\big(Rm^2(t)+Rm^{\sharp}(t)\big)
	\]
	and the fact that $(M, g)$ has nonnegative curvature operator $Rm\geq 0$, it follows that the quadratic term
	$Rm(t)^2+Rm(t)^{\sharp} \geq 0$ for all $t \in [0,1]$.  
	Thus, by Hamilton's strong maximum principle (see, e.g., \cite[Theorem 2.2.1]{Cao-Zhu:06}), 
	there exists an interval $0<t<\delta$ over which the rank of $Rm(t)$ is constant, 
	and $\ker(Rm(t))$ is invariant under parallel translation.  
	Since $g(t)=(1+t)\Phi(t)^{\ast}(g)$ with $g(0)=g$, we conclude that the rank of $Rm = Rm(0)$ is locally constant and that $\ker(Rm) = \ker(Rm(0))$ is invariant under parallel translation.
	
	\medskip
	\noindent{\bf Claim.} The expanding Ricci soliton $(M^n,g,f)$ has positive curvature operator.
	
	\smallskip
	\noindent{\it Proof of Claim.}
	As in \cite{Xie:25}, we argue by contradiction. Suppose that $M$ does not have positive curvature operator. Since $M$ has nonnegative curvature operator, there exists a point $p_0\in M$ such that the smallest eigenvalue of the curvature operator $\mu(p_0)=0$. 
	Because the rank of the curvature operator $Rm$ is locally constant, it follows that $\mu\equiv 0$ on a neighborhood $U$ of $p_0$. Moreover, as $\ker(Rm)$ is invariant under parallel translation, there exists an orthogonal decomposition of the tangent bundle $TU=V_1\oplus V_2$, where $V_1=\ker(Rm)$ with $\dim(V_1)\geq 1$, and both $V_1$ and $V_2$ are invariant under parallel translation. By \cite[Lemma 9.1]{Ha:86}, $M$ locally splits off at least one Euclidean factor. Since expanding Ricci solitons are real analytic, completeness implies that this local splitting extends globally.
	However, this contradicts the assumption that $M$ is asymptotically conical: 
	any Euclidean factor of $M$ would force the asymptotic cone $\mathcal{C}$ to split off a Euclidean factor, thereby violating the regularity assumption that $\mathcal{C}$ has only an isolated singularity at the tip, unless $\mathcal{C}$ is a Euclidean space with flat metric. In this latter case, a flat Euclidean space is not a cone with positive scalar curvature, which is a contradiction. Consequently, $M$ must have positive curvature operator.

	\smallskip	
	This completes the proof of the {\bf Claim}, and the proof of Theorem \ref{thm:ac_Wflat}.
\end{proof}

\begin{proof} [Proof of Corollary \ref{cor:4D_halfLCF}] 
	Let $(M^4, g, f)$ be a non-compact, half-conformally flat (i.e., either $W^+=0$ or $W^-=0$), asymptotically conical Ricci expander with positive scalar curvature. By essentially the same arguments as in Chen-Wang \cite{Chen-Wang:15} for gradient shrinking and steady Ricci solitons, one can show that $(M^4, g, f)$ has vanishing $D$-tensor as introduced in \cite{Cao-Chen:12, Cao-Chen:13}. Thus, by  \cite[Proposition 3.2]{Cao-Chen:13},  $(M^4, g, f)$ is locally conformally flat. Therefore, it follows from Theorem \ref{thm:ac_Wflat} that $(M^4, g, f)$ has positive curvature operator $Rm>0$. 
\end{proof}

Finally, we conclude this section by providing a more direct proof of Theorem~\ref{thm:ac-3D}.

\begin{proof} [Proof of Theorem \ref{thm:ac-3D}] 
	First, note that we can diagonalize the curvature operator $Rm$ with eigenvalues $m_1\leq m_2\leq m_3$, which are principal sectional curvatures of $(M^3, g, f)$, with respect to a suitable orthonormal frame $\{e_1, e_2, e_3\}$ at each point $p\in M^3$. Then, the Ricci tensor $Rc$ is diagonalized with eigenvalues 
	$$(m_1+m_2)\leq (m_1+m_3)\leq (m_2+m_3),$$  
	and the scalar curvature is given by $R=2(m_1+m_2+m_3)$. Moreover, for $n=3$, the differential equation 
	$$\Delta_f Rm = 2\rho Rm - 2(Rm^2 + Rm^{\sharp}),$$
	implies (\cite{Ha:86})
	\begin{equation*} \label{eq:3d_eqs}
		\Delta_fm_1\leq 2\rho m_1 - 2(m_1^2+m_2m_3).
	\end{equation*}
	Then, by direct computations, we have
	\begin{equation} \label{eq:m_1/R}
		\begin{split}
			\Delta_F \left( \frac{m_1}{R} \right) &= \frac{1}{R}\Delta_fm_1 - \frac{m_1}{R^2} \Delta_f R \quad\quad (F = f - 2\log R)\\ 
			&\leq \frac{2}{R^2} \left[m_1|Rc|^2 - R(m_1^2+m_2m_3)\right] \\
			&= \frac{4}{R^2} \left[(m_1-m_3)m_2^2 + (m_1-m_2)m_3^2\right] \\
            & \leq 0. 
		\end{split}
	\end{equation} 
	
	Next, we show $m_1\geq 0$ (hence $(M^3,g,f)$ has nonnegative curvature operator) by contradiction. Suppose not. Then, the set
	\begin{equation*}
	M^- := \{ p \in M : m_1(p) < 0 \}
	\end{equation*}
	is nonempty, and
	\begin{equation*} \label{eq:epsilon}
		\epsilon := \inf_{M}\left( \frac{m_1}{R} \right) < 0 .
	\end{equation*}
	
	\noindent{\bf Case 1: Interior infimum.}  
	Suppose the negative infimum $\epsilon$ is attained at some $p_0 \in M$ such that $m_1(p_0)<0$. By \eqref{eq:m_1/R}, we have
	\begin{equation*}
		\begin{split}
			\Delta_F \left( \frac{m_1}{R} \right) \leq \frac{4}{R^2} \left[(m_1-m_3)m_2^2 + (m_1-m_2)m_3^2\right]  \leq 0.
		\end{split}
	\end{equation*}
	Then, Calabi's barrier strong maximum principle (Lemma \ref{lem:barrier-max}) implies the ratio $m_1/R$ must be constant on a neighborhood of $p_0$. In particular, in that neighborhood of $p_0$, we have
	\begin{equation} \label{eq:m_eqs}
		(m_1-m_3) m_2^2 \equiv 0,\quad (m_1-m_2)m_3^2 \equiv 0.
	\end{equation}
	
	Since $m_1(p_0) < 0$ and the scalar curvature $R>0$, the first equation in \eqref{eq:m_eqs} forces $m_2(p_0) = 0$. However, the second equation in \eqref{eq:m_eqs} then implies $m_3(p_0) = 0$, which contradicts the assumption of positive scalar curvature. Thus, \textbf{Case 1} is ruled out.
	
	\smallskip
	\noindent{\bf Case 2: Infimum at infinity.}  
	Assume instead that $m_1/R$ attains its negative infimum at infinity. By Lemma \ref{lem:3Dcone}, any 3-dimensional cone with positive scalar curvature automatically has positive curvature operator/sectional curvature, and the smallest eigenvalue of its curvature operator is zero. Then, as in {\bf Part I} of the proof of Theorem \ref{thm:ac_Wflat}, we get a contradiction. Thus, \textbf{Case 2} is impossible.  
	
	\smallskip
	Combining both cases, we conclude that $M^-=\emptyset$, so $M$ has nonnegative curvature operator or, equivalently, nonnegative sectional curvature (for $n=3$).  
	
	\smallskip
	Finally, by the same argument as in the proof of $Rm>0$ (i.e., {\bf Part III})  for Theorem \ref{thm:ac_Wflat}, we conclude that $M^3$ has positive curvature operator. This completes the proof of Theorem \ref{thm:ac-3D}.
\end{proof}

\smallskip
\section{4D asymptotically conical Ricci expanders with (half) PIC} \label{sec:ac-halfPIC}

In this section, we study curvature pinching in  four-dimensional asymptotically conical gradient Ricci expanders with (half) PIC. In particular, we prove Theorem \ref{thm:ac_halfPIC} and Theorem \ref{thm:ac_PIC}.

We first investigate the positivity of the self-dual curvature operator $A$ (respectively, the anti-self-dual curvature operator $C$), as defined in the curvature decomposition \eqref{eq:CODecomposition},  for asymptotically conical gradient expanding Ricci solitons with $A_2 \geq 0$ (respectively, \ $C_2 \geq 0$).  We assume in addition that the asymptotic cone has nonnegative self-dual (or anti-self-dual) curvature operator and positive scalar curvature. Under these conditions, we prove the following stronger result, from which Theorem \ref{thm:ac_halfPIC} follows. 

\begin{thm} \label{thm:ac_A_2>0}
	Let $(M^4,g,f)$ be a $4$-dimensional non-compact asymptotically conical gradient expanding Ricci soliton. Suppose that the asymptotic cone has positive scalar curvature and satisfies either $A \geq 0$ or $C \geq 0$. Then:
	\begin{itemize}
		\item[(a)] If $(M^4,g,f)$ satisfies $A_2 \geq 0$ or $C_2 \geq 0$, then $A \geq 0$ or $C \geq 0$ on $M^4$.
		
		\item[(b)] If $(M^4,g,f)$ satisfies $A_2 > 0$ or $C_2 > 0$, then $A > 0$ or $C > 0$ on $M^4$.
	\end{itemize}
\end{thm}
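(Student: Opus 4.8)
The plan is to mirror the structure used in Part I of the proof of Theorem~\ref{thm:ac_Wflat}, replacing the ratio $\lambda_1/R$ with the ratios $A_1/R$ and $C_1/R$. Concretely, assume without loss of generality that the asymptotic cone $\mathcal C$ satisfies $A\ge 0$ (the case $C\ge 0$ being symmetric after reversing orientation). I first claim that the corresponding soliton hypothesis must then be $A_2\ge 0$ (resp.\ $A_2>0$): indeed, if instead only $C_2\ge 0$ held, the asymptotic cone would still need to be compatible, and one checks using the curvature properties of cones (in the spirit of the Appendix, e.g.\ a lemma analogous to \texttt{Lemma~\ref{lem:W=0cone}}) that a $4$-cone with positive scalar curvature and $A\ge 0$ has $\tilde A_1\equiv 0$, the smallest eigenvalue occurring in a radial $2$-plane direction. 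So the natural dichotomy is: prove $A\ge 0$ on $M^4$ from $A_2\ge 0$ on $M^4$ plus $A\ge 0$ on $\mathcal C$, and prove $C\ge 0$ on $M^4$ from $C_2\ge 0$ on $M^4$ plus $C\ge 0$ on $\mathcal C$. Part~(a) is this statement; Part~(b) upgrades $\ge$ to $>$ via a strong-maximum-principle / parallel-translation argument as in the final step of Theorem~\ref{thm:ac_Wflat}.

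The key step for Part~(a) is a differential inequality for $A_1/R$ analogous to \texttt{Lemma~\ref{lem:Delta_lambda1/R}}. Starting from $\Delta_f A = 2\rho A - 2(A^2 + 2A^{\sharp} + B{}^tB)$ in Lemma~\ref{lem:4Dequations}, I choose at a fixed point $p_0$ a unit self-dual $2$-form $\omega_1$ with $A\omega_1=A_1\omega_1$, extend it by parallel transport along radial geodesics so that $\nabla\omega_1(p_0)=\Delta\omega_1(p_0)=0$, and use $A_1\le A(\omega_1,\omega_1)$ with equality at $p_0$ to get, in the barrier sense,
\begin{equation*}
	\Delta_f A_1 \le 2\rho A_1 - 2\bigl(A(\omega_1,A\omega_1) + 2A^{\sharp}(\omega_1,\omega_1) + (B{}^tB)(\omega_1,\omega_1)\bigr)
	= 2\rho A_1 - 2A_1^2 - 4A^{\sharp}_{11} - 2|{}^tB\omega_1|^2.
\end{equation*}
Writing $A^{\sharp}_{11}=A_2A_3$ (for the adjoint, the $(1,1)$-entry is the product of the other two eigenvalues) and combining with $\Delta_f R = 2\rho R - 2|Rc|^2$, a computation of $\Delta_F(A_1/R)$ with $F=f-2\log R$ should yield
\begin{equation*}
	\Delta_F\!\left(\frac{A_1}{R}\right) \le \frac{2}{R^2}\Bigl(A_1|Rc|^2 - R\,A_1^2 - 2R\,A_2A_3 - R|{}^tB\omega_1|^2\Bigr),
\end{equation*}
and the right-hand side should be manifestly $\le 0$ whenever $A_1/R$ achieves a negative value, using $A_1<0\le A_2\le A_3$, the relation $|Rc|^2 = |\mathring{Rc}|^2 + R^2/4$, and $\tr A=\tr C = R/4$ together with $\sum A_i^2 + \sum C_i^2 + 2|B|^2 = |Rm|^2$, etc. The precise sign bookkeeping here is the main obstacle: one must show that the $-2RA_2A_3$ and $-R|{}^tB\omega_1|^2$ terms dominate $A_1|Rc|^2$ (which is negative anyway when $A_1<0$) so that the whole bracket is $\le 0$; this is where the extra input $A_2\ge 0$ is essential, since otherwise $-2RA_2A_3$ could have the wrong sign.

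With the differential inequality in hand, the rest follows the template exactly. Set $\varepsilon_1 := \inf_M (A_1/R)$ and suppose for contradiction $\varepsilon_1<0$. If the infimum is attained at an interior point $p_0$, Calabi's barrier strong maximum principle (Lemma~\ref{lem:barrier-max}) forces $A_1/R$ to be constant on a bounded connected domain $\Omega$ through $p_0$, hence the right-hand-side bracket vanishes identically there; but at $p_0$ we have $A_1<0\le A_2\le A_3$ and $R>0$, so the vanishing of $A_2A_3$ and $|Rc|^2$-type terms forces $A_3=0$ and then the full curvature to collapse, contradicting $R>0$ (since the soliton has $R>0$ by Chan's result cited after Theorem~\ref{thm:ac_Wflat}). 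If the infimum is attained at infinity, take $p_i\to\infty$ with $(A_1/R)(p_i)\to\varepsilon_1$; by the asymptotically conical hypothesis one finds $\tilde p_i\in\mathcal C$ with $(\tilde A_1/\tilde R)(\tilde p_i)\to\varepsilon_1$, but since $\mathcal C$ has positive scalar curvature and $\tilde A\ge 0$ with $\tilde A_1\equiv 0$ (radial direction), this limit is $0$, contradicting $\varepsilon_1<0$. Hence $A\ge 0$ on $M^4$, proving (a). For (b), if moreover $A_2>0$ on $M^4$, run the canonical immortal Ricci flow $g(t)=(1+t)\Phi(t)^{\ast}g$; the ODE-reaction term $A^2+2A^{\sharp}+B{}^tB$ preserves $A\ge 0$, so Hamilton's strong maximum principle gives that $\ker A$ is parallel and of locally constant rank, and then either $A>0$ or $M$ locally (hence, by real-analyticity and completeness, globally) splits in a way incompatible with being asymptotic to a non-flat cone with only an isolated singularity — exactly as in the \textbf{Claim} at the end of the proof of Theorem~\ref{thm:ac_Wflat} — so $A>0$ on $M^4$.
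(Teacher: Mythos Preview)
Your approach to Part~(a) is essentially the paper's: the differential inequality
\[
\Delta_F\!\left(\frac{A_1}{R}\right)\le \frac{2}{R^2}\Bigl(A_1|Rc|^2 - R(A_1^2 + 2A_2A_3 + |{}^tB\omega_1|^2)\Bigr)
\]
is exactly what is derived there (the paper writes $B_1^2$ in place of $|{}^tB\omega_1|^2$, which is a further underestimate). The sign analysis is simpler than you make it: when $A_1<0$ every summand in the bracket is nonpositive, so no identities relating $|Rc|^2$ to $|Rm|^2$ are needed. Your interior contradiction is slightly garbled --- the clean route is that vanishing of the bracket with $A_1<0$ forces $A_1|Rc|^2=0$, hence $Rc=0$, contradicting $R>0$ --- but the idea is right. (A small inaccuracy: on the cone one has $\tilde A_1\ge 0$, not necessarily $\tilde A_1\equiv 0$; this still suffices for Case~2.)

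Part~(b), however, has a genuine gap. You propose to apply Hamilton's strong maximum principle to $A$ under the immortal flow, conclude $\ker A\subset\wedge^+$ is parallel, and then invoke the splitting argument from the \textbf{Claim} at the end of Theorem~\ref{thm:ac_Wflat}. But that Claim relies on Hamilton's Lemma~9.1 in \cite{Ha:86}, which says that a parallel kernel of the \emph{full} curvature operator $Rm\colon\wedge^2TM\to\wedge^2TM$ produces a de~Rham splitting of $TM$. A parallel subbundle of $\wedge^+M$ gives no such splitting; rather, a parallel self-dual $2$-form yields a local K\"ahler structure. So the step ``$M$ locally splits in a way incompatible with being asymptotically conical'' fails as written. (One could try to salvage it by observing that a local K\"ahler structure forces $A_2=0$, contradicting $A_2>0$; but you do not do this, and it is a different argument.)

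The paper's proof of~(b) avoids all of this with a one-line pointwise barrier: if $A_1(p_0)=0$ at an interior minimum, then in the barrier sense
\[
0\le \Delta_f A_1\big|_{p_0}\le 2\rho A_1 - 2A_1^2 - 4A_2A_3 - 2B_1^2\Big|_{p_0} = -4A_2A_3 - 2B_1^2 < 0,
\]
using $A_3\ge A_2>0$. This immediately gives $A>0$. You should replace the flow/splitting paragraph with this direct argument.
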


\begin{proof} 
	Again, as first mentioned in Section \ref{sec:ac_Dflat}, the assumption of the asymptotic cone having positive scalar curvature implies that $(M^n,g,f)$ itself has positive scalar curvature $R>0$.

	(a) Without loss of generality, we may assume $A_2 \geq 0$. Again, as in the proof of Theorem \ref{thm:ac_Wflat}, we argue by contradiction. Suppose instead that the claim fails. Then the set
	\begin{equation*}
	M^- := \{ p \in M : A_1(p) < 0 \}
	\end{equation*}
	is nonempty, and we have
	\begin{equation*}
	\epsilon^{\prime} := \inf_{M}\left( \frac{A_1}{R} \right) < 0.
	\end{equation*}

	\smallskip
	\noindent\textbf{Case 1: Interior infimum.}  
	Suppose the negative infimum $\epsilon^{\prime}$ is attained at some point $p_0 \in M$. Then, there exists a neighborhood $\Omega \ni p_0$, such that $A_1(p)<0$ for all $p \in \Omega$. Thus, by Lemma \ref{lem:4Dequations} and direct computation, for $F = f - 2\log R$, we have
	\begin{equation*}
		\Delta_F \left( \frac{A_1}{R} \right) 
		\leq \frac{2}{R^2} \left[ A_1|Rc|^2 - R(A_1^2 + B_1^2 + 2A_3A_2) \right] \leq 0,
	\end{equation*}
	in the barrier sense on $\Omega$.

	Now, by Calabi's barrier strong maximum principle (Lemma \ref{lem:barrier-max}), the ratio $A_1/R$ must be constant on $\Omega$. In particular, we have
	\begin{equation*}
		A_1|Rc|^2 - R(A_1^2 + B_1^2 + 2A_3A_2) \equiv 0,
	\end{equation*}
	which implies $A_1 |Rc|^2 = R(A_1^2 + B_1^2 + 2A_3A_2) \equiv 0$ on $\Omega$.  

	Since $A_1/R$ attains its negative infimum at $p_0$, $A_1(p_0)\neq 0$ so we must have $|Rc|^2=0$. then, the scalar curvature is zero. This contradicts the fact that the scalar curvature is positive. Hence, {\bf Case 1} is ruled out.

	\smallskip
	\noindent\textbf{Case 2: Infimum at infinity.}  
	If $A_1/R$ attains its negative infimum at infinity, then by the assumptions on the asymptotic cone and the same argument in the proof of Theorem  \ref{thm:ac_Wflat}, we can rule out {\bf Case 2}.

	\smallskip
	Combining both cases, we conclude that $M^-=\emptyset$. Therefore, $M$ satisfies $A \geq 0$, completing the proof of part (a).

	\smallskip
	(b) Without loss of generality, we assume $A_2 > 0$. By part (a), this implies $A_1 \geq 0$. Following \cite[Proposition 3.1(b)]{Cao-Xie:23}, we prove that $A_1 > 0$ by contradiction. Suppose $A_1(p_0) = 0$ at some point $p_0 \in M^4$. Then $A_1$ attains its minimum at $p_0$. Let $\eta \in \wedge_{p_0}^{+}(M)$ be a null eigenvector of $A$ such that $A(\eta,\eta) = A_1(p_0) = 0$ at $p_0$. Extend $\eta$ to a local section (still denoted by $\eta$) by parallel transport along geodesics emanating from $p_0$.  

	At $p_0$, in the barrier sense, Lemma \ref{lem:4Dequations} yields
	\begin{equation*}
		\begin{split}
			0 &\leq \Delta_f A_1 \\
			&\leq \Delta_f A(\eta, \eta) \\
			&= (\Delta_f A)(\eta, \eta) \\
			&\leq 2\bigl(\rho A_1 - A_1^2 - 2A_2 A_3 - B_1^2\bigr) \\
			&< 0,
		\end{split}
	\end{equation*}
	where we used the assumption $A_3 \geq A_2 > 0$ in the last inequality. This contradiction shows that $A_1 > 0$ on $M^4$.

	This completes the proof of Theorem \ref{thm:ac_A_2>0}.
\end{proof}

Next, we prove Theorem \ref{thm:ac_PIC}.

\begin{proof} [Proof of Theorem \ref{thm:ac_PIC}]
	Let $0 \leq B_1 \leq B_2 \leq B_3$ be the singular eigenvalues of the matrix $B$ and ${\lambda}_1 \leq {\lambda}_2 \leq {\lambda}_3 \leq {\lambda}_4$ be the eigenvalues of the Ricci tensor ${Rc}$.
	Then, by \cite[Lemma 2.2] {Cao-Xie:25}, the sum of the least two eigenvalues of $Rc$ is given by
	\begin{equation*}
		\lambda_1+\lambda_2 = \tfrac 1 2(R-4B_3).
	\end{equation*}
	Thus, 2-nonnegative Ricci curvature is equivalent to $u:=R-4B_3 \geq0.$
	
	Now, by Lemma \ref{lem:4Dequations} and essentially the same computations as in the proof of \cite[Theorem 3.1(a)]{Cao-Xie:25}, we have
	\begin{equation*}
		\begin{split}
			\Delta_f u &= \Delta (R-4B_3) \\
			&\leq 2\rho u- \left[ 2|Rc|^2 - 8(A_3B_3 +  C_3B_3 + 2B_1B_2) \right] \\
			&\leq 2\rho u- 8(A_2 + A_1 + C_2 + C_1)B_3.
		\end{split}
	\end{equation*} 	
	Then, by Lemma \ref{lem:4Dequations} and direct computations, for $F = f - 2\log R$, we have
	\begin{equation} \label{eq:4-1}
		\begin{split}
			\Delta_F \left( \frac{u}{R} \right) &= \frac{1}{R}\Delta_fu - \frac{u}{R^2} \Delta_f R \\
			&\leq \frac{2}{R^2} \big[u|Rc|^2 - 4R(A_2 + A_1 + C_2 + C_1)B_3 \big].
		\end{split}
	\end{equation} 
	
	Next, we prove, again by contradiction, that $(M^4,g,f)$ has 2-nonnegative Ricci curvature. Suppose instead that the Ricci curvature is not  2-nonnegative. Then, 
	\begin{equation*}
	M^- := \{ p \in M : u(p) < 0 \}
	\end{equation*}
	is nonempty, and we have
	\begin{equation} \label {eq:4-2}
		\tilde{\epsilon} := \inf_{M}\left( \frac{u}{R} \right) < 0.
	\end{equation}
	
	\smallskip
	\noindent\textbf{Case 1: Interior infimum.}  
	Suppose the negative infimum $\tilde{\epsilon}$ is attained at some point $p_0 \in M$. Then, there exists a neighborhood $\Omega \ni p_0$, such that $u<0$ on $\Omega$. By \eqref{eq:4-1} and the assumption that $(M^4,g,f)$ has PIC, so that $A_1+A_2>0$ and $C_1+C_2>0$, we have
	\begin{equation*}
		\Delta_F \left( \frac{u}{R} \right) 
		\leq \frac{2}{R^2} \left[ u|Rc|^2 - 4R(A_2 + A_1 + C_2 + C_1)B_3 \right] \leq 0
	\end{equation*}
	on $\Omega$, where we have used the fact that $B_3 \geq 0$. Thus, by Calabi's barrier strong maximum principle (Lemma \ref{lem:barrier-max}), the ratio $u/R$ must be constant on $\Omega$. In particular, we have
	\begin{equation*}
		u|Rc|^2 - 4R(A_2 + A_1 + C_2 + C_1)B_3 \equiv 0,
	\end{equation*}
	which implies $u |Rc|^2 =4R(A_2 + A_1 + C_2 + C_1)B_3 \equiv0$ on $\Omega$.  
	
	Since $u(p_0)<0$ by  \eqref{eq:4-2}, we must have $|Rc|^2=0$ at $p_0$. But this is a contradiction to the scalar curvature  $R>0$. Hence, {\bf Case 1} is ruled out.
	
	\smallskip
	\noindent\textbf{Case 2: Infimum at infinity.}  
	Suppose that $u/R$ attains its negative infimum at infinity. Since $(M^4,g,f)$ has PIC, the asymptotic cone must have WPIC. By Lemma \ref{lem:4Dcone}, the Ricci curvature of the asymptotic cone is therefore nonnegative. Then by similar arguments in the proof of Theorem \ref{thm:ac_Wflat}, we can rule out {\bf Case 2}.
	
	\smallskip
	Combining both cases, we conclude that $M^-=\emptyset$. Therefore, $M$ satisfies $u \geq 0$, completing the proof of 2-nonnegativity of the Ricci curvature.
	
	\smallskip
	Given that $(M^4,g,f)$ has 2-nonnegative Ricci curvature, following the proof in \cite[Theorem 3.1(b)]{Cao-Xie:25}, we shall prove 2-positive Ricci curvature by contradiction. We consider the quadratic form $Z:=RI-4\sqrt{B {}^tB}$, where $I$ is the 3 by 3 identity matrix. By the 2-nonnegativity of the Ricci curvature, we know that $Z\geq 0$ and that  2-positive Ricci curvature is equivalent to $Z>0$. Now, we denote the eigenvalues of $Z$ by
	\begin{equation*}
		0\leq Z_1\leq Z_2\leq Z_3.
	\end{equation*}
	Assume that $Z$ has a null eigenvector at some point $p_0$. Then $Z_1$ attains its minimum at $p_0$. Let $\eta \in \wedge_{p_0}^{+}(M)$ be a null eigenvector of $Z$ such that $Z(\eta,\eta) = Z_1(p_0) = 0$ at $p_0$. Extend $\eta$ to a local section (still denoted by $\eta$) by parallel transport along geodesics emanating from $p_0$. Then, at $p_0$, in the barrier sense, we have
	\begin{equation*}
		\begin{split}
			0 &\leq \Delta_f Z_1 \\
			&\leq \Delta_f Z(\eta, \eta) \\
			&= (\Delta_f Z)(\eta, \eta) \\
			&\leq 2\rho Z_1- 8(A_2 + A_1 + C_2 + C_1)B_3 \\
			&< 0,
		\end{split}
	\end{equation*}
	where we have used the PIC condition and $B_3(p_0)=\tfrac{R}{4}(p_0)>0$ in the last inequality. Thus, we get a contradiction. Therefore, the Ricci curvature is 2-positive.
	
	\smallskip
	Finally, we establish the curvature estimate $|{Rm}| \le 2R$. By the positive isotropic curvature condition, we have $|A|^2 \le \tfrac{3}{16}R^2$ and $|C|^2 \le \tfrac{3}{16}R^2$; see, e.g., the proof of \cite[Theorem 1.3]{Cao-Xie:23}. Since the Ricci tensor of $(M^4, g, f)$ is $2$-positive, it follows, as in the proof of \cite[Theorem 3.1(a)]{Cao-Xie:25}, that $|{Rc}|^2 \le R^2$. 
	On the other hand,
	\[
	R^2 \ge |{Rc}|^2 = |\mathring{Rc}|^2 + \tfrac{1}{4}R^2 = 4|B|^2 + \tfrac{1}{4}R^2.
	\]
	Therefore,
	\begin{equation*}\label{eq:boundedRm}
		\begin{split}
			|{Rm}|^2 
			&\le 2\bigl(|A|^2 + |C|^2 + |B|^2\bigr) 
			\le \tfrac{9}{8}R^2.
		\end{split}
	\end{equation*}
	This completes the proof of the curvature bound, and hence the proof of Theorem \ref{thm:ac_PIC}.
\end{proof}

Finally, by Lemma \ref{lem:4Dcone} and an argument similar to that used in the proof of Theorem \ref{thm:ac_PIC}, we obtain the following slightly stronger result than Theorem \ref{thm:ac_PIC}.

\begin{thm} \label{thm:ac_PIC_restate}
	Let $(M^4,g,f)$ be a $4$-dimensional non-compact asymptotically conical gradient expanding Ricci soliton. Suppose that the asymptotic cone has positive scalar curvature, and satisfies $A \geq 0$ and $C \geq 0$. Then:
	\begin{itemize}
		\item[(a)] If $(M^4,g,f)$ satisfies $A_2 \geq 0$ and $C_2 \geq 0$, then the Ricci curvature is $2$-nonnegative, and $|Rm| \leq R$.
		
		\item[(b)] If $(M^4,g,f)$ satisfies $A_2 > 0$ and $C_2 > 0$, then the Ricci curvature is $2$-positive, and $|Rm| \leq R$.
	\end{itemize}
\end{thm}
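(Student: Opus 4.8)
The plan is to follow the proof of Theorem~\ref{thm:ac_PIC} almost verbatim, preceded by a reduction step that strengthens the hypotheses on $M$ by means of Theorem~\ref{thm:ac_A_2>0}. As recorded at the start of Section~\ref{sec:ac_Dflat}, positivity of the scalar curvature of the asymptotic cone already forces $R>0$ on all of $(M^4,g,f)$. First I would run the argument of Theorem~\ref{thm:ac_A_2>0} separately for the self-dual operator $A$ and for the anti-self-dual operator $C$: since the asymptotic cone satisfies \emph{both} $A\ge 0$ and $C\ge 0$, the required one-sided cone hypothesis is available for each chirality, so under the hypotheses of part~(a) one obtains $A\ge 0$ and $C\ge 0$ on all of $M$, and under those of part~(b) one obtains $A>0$ and $C>0$ on all of $M$. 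In particular $M$ then has WPIC in case~(a) and PIC in case~(b); for case~(b) one could at this point simply invoke Theorem~\ref{thm:ac_PIC} directly, but the steps below re-derive $2$-positivity and simultaneously handle part~(a).

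Next I would set $u:=R-4B_3$, where $0\le B_1\le B_2\le B_3$ are the singular values of $B$. By \cite[Lemma~2.2]{Cao-Xie:25} one has $\lambda_1+\lambda_2=\tfrac12(R-4B_3)=\tfrac12 u$, so the Ricci curvature is $2$-nonnegative (resp.\ $2$-positive) precisely when $u\ge 0$ (resp.\ $u>0$) on $M$. By Lemma~\ref{lem:4Dequations} and the matrix computation in the proof of \cite[Theorem~3.1(a)]{Cao-Xie:25} one has $\Delta_f u\le 2\rho u-8(A_1+A_2+C_1+C_2)B_3$, hence, with $F=f-2\log R$ and $\Delta_f R=2\rho R-2|Rc|^2$,
\[
\Delta_F\!\Big(\tfrac{u}{R}\Big)\;\le\;\tfrac{2}{R^2}\big[\,u\,|Rc|^2-4R\,(A_1+A_2+C_1+C_2)\,B_3\,\big].
\]
To prove $u\ge 0$ I would argue by contradiction. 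If $\inf_M(u/R)<0$ is attained at an interior point $p_0$, then on a neighborhood $\Omega\ni p_0$ where $u<0$ the reduction gives $A_1+A_2\ge 0$, $C_1+C_2\ge 0$, and together with $B_3\ge 0$ the displayed inequality yields $\Delta_F(u/R)\le \tfrac{2}{R^2}u|Rc|^2\le 0$; Calabi's barrier strong maximum principle (Lemma~\ref{lem:barrier-max}) then makes $u/R$ constant on $\Omega$, forcing $u|Rc|^2\equiv 0$ there and hence $|Rc|^2(p_0)=0$, contradicting $R>0$. If instead the negative infimum is attained at infinity, I would use that $u/R=2(\lambda_1+\lambda_2)/R$ is scale-invariant, so asymptotic conicality transfers it to the cone; but the cone satisfies $A\ge 0$ and $C\ge 0$, hence WPIC, so by Lemma~\ref{lem:4Dcone} it has nonnegative Ricci curvature and therefore $\tilde u/\tilde R=2(\tilde\lambda_1+\tilde\lambda_2)/\tilde R\ge 0$ there, again a contradiction. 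This proves part~(a).

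For part~(b) I would then run the $2$-positivity step of Theorem~\ref{thm:ac_PIC}. After the reduction, $M$ has $A>0$ and $C>0$, so $A_1+A_2>0$ and $C_1+C_2>0$ everywhere, and by part~(a) the form $Z:=RI-4\sqrt{B{}^tB}$, whose smallest eigenvalue is $Z_1=R-4B_3=u$, satisfies $Z\ge 0$. Suppose $Z_1(p_0)=0$ at some $p_0$; then $B_3(p_0)=\tfrac{R}{4}(p_0)>0$, and choosing a null eigenvector $\eta$ of $Z$ at $p_0$, extended by parallel transport along radial geodesics, Lemma~\ref{lem:4Dequations} together with the computation of \cite[Theorem~3.1(b)]{Cao-Xie:25} gives, in the barrier sense at $p_0$,
\[
0\;\le\;\Delta_f Z_1\;\le\;(\Delta_f Z)(\eta,\eta)\;\le\;2\rho Z_1-8(A_1+A_2+C_1+C_2)B_3\;=\;-2(A_1+A_2+C_1+C_2)R\;<\;0,
\]
a contradiction. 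Hence $u>0$ on $M$, i.e.\ the Ricci curvature is $2$-positive.

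The step I expect to be the main obstacle is the reduction itself: one must check carefully that Theorem~\ref{thm:ac_A_2>0}, whose statement is phrased disjunctively, genuinely upgrades the separate one-sided hypotheses $A_2\ge 0$ (resp.\ $A_2>0$) and $C_2\ge 0$ (resp.\ $C_2>0$) on $M$ to $A\ge 0$ and $C\ge 0$ (resp.\ $A>0$ and $C>0$) on $M$, using the matching one-sided hypotheses on the cone — this is exactly the input that lets the weaker assumptions of Theorem~\ref{thm:ac_PIC_restate} feed the sign-critical inequalities in the $u$-argument and in the $2$-positivity step. Everything else is either a (cited) matrix curvature identity from \cite{Cao-Xie:25} or the transfer of the scale-invariant ratio $u/R$ to the asymptotic cone, carried out exactly as in the proofs of Theorems~\ref{thm:ac_Wflat} and~\ref{thm:ac_PIC}.
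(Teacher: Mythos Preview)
Your proposal is correct and matches the approach the paper has in mind: the paper only says that Theorem~\ref{thm:ac_PIC_restate} follows ``by Lemma~\ref{lem:4Dcone} and an argument similar to that used in the proof of Theorem~\ref{thm:ac_PIC}'', and you have filled in exactly those details, including the natural preliminary step of invoking Theorem~\ref{thm:ac_A_2>0} separately on each chirality to upgrade $A_2\ge 0$, $C_2\ge 0$ to $A\ge 0$, $C\ge 0$ on $M$ (without which the sign of $(A_1+A_2+C_1+C_2)B_3$ in the key inequality would be uncontrolled). Your worry about the disjunctive phrasing of Theorem~\ref{thm:ac_A_2>0} is well placed but harmless: its proof treats the $A$-case after ``without loss of generality'', using only the cone hypothesis $A\ge 0$, so the argument applies verbatim and independently to $C$.
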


\smallskip
\section{General lemma and further applications} \label{sec:general_lem}

In this section, we formulate a fairly general method that can be effectively applied to prove generalized Hamilton--Ivey type curvature pinching estimates for a class of non-compact, asymptotically conical gradient Ricci solitons. As applications, we obtain several analogues of other known curvature pinching results for ancient solutions, in the setting of asymptotically conical expanding Ricci solitons--including Theorem \ref{thm:ac_UPIC} as stated in the introduction, and Theorem \ref{thm:curv_pinching} below.

\subsection{A general lemma} 

By identifying common patterns in how we have proved the curvature pinching theorems in Section \ref{sec:ac-halfPIC}, we are led to the following 
\begin{lem} \label{lem:general}
	Let $(M^n,g,f)$ be an $n$-dimensional non-compact gradient Ricci soliton, satisfying Eq. \eqref{eq:Riccisoliton} and with positive scalar curvature. Suppose $u: M^n \rightarrow \R$ is a Lipschitz function and satisfies the differential inequality
	\begin{equation} \label{eq:Delta_f-u}
		\Delta_f u \leq 2\rho u -v
	\end{equation}
	in the barrier sense, where  $v \geq0$ is a nonnegative function on $M^n$. 

	\begin{itemize}
		\item[(i)] If for any sequence of points $\{p_i\} \subset M^n$, with $p_i \rightarrow \infty$, we have
		\begin{equation} \label{eq:lim_u/R}
			\liminf_{i\rightarrow \infty} \left( \frac{u}{R}\right)  \geq 0.
		\end{equation}
		Then, $u\geq 0$ on $M^n$.
		\smallskip
	
		\item[(ii)] If in addition $v>0$, then $u>0$ on $M^n$.
	\end{itemize}
\end{lem}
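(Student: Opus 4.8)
The plan is to reduce both parts to a maximum-principle statement about the quotient $w := u/R$, which is well defined and continuous since $R>0$, following the pattern already used in the proofs of Theorems \ref{thm:ac_A_2>0} and \ref{thm:ac_PIC}. The essential preliminary computation is the following: writing $F := f - 2\log R$ and combining the soliton identity $\Delta_f R = 2\rho R - 2|Rc|^2$ from Lemma \ref{lem:4Dequations} with the hypothesis \eqref{eq:Delta_f-u}, all first-order terms and the $|\nabla R|^2$ term cancel and one obtains, in the barrier sense,
\[
	\Delta_F\!\left(\frac{u}{R}\right)\ \leq\ \frac{1}{R^2}\bigl(2u\,|Rc|^2 - vR\bigr).
\]
Dividing by $R$ is precisely what kills the term $2\rho u$ on the right-hand side of \eqref{eq:Delta_f-u}, which is the obstruction to running a maximum principle on $u$ directly when $\rho<0$. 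I would take care to check that this inequality genuinely passes to the barrier sense: an upper support function $\phi$ for $u$ at a point $p$ yields $\phi/R$ as an upper support function for $w$ at $p$, and a direct differentiation shows the gradient terms drop out of $\Delta_F(\phi/R)$.

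For part (i), set $\epsilon := \inf_M w$. If $\epsilon \ge 0$ there is nothing to prove, so assume $\epsilon < 0$ and argue by contradiction, distinguishing two cases. If $\epsilon$ is attained at a point $p_0 \in M^n$, then $u(p_0)<0$, so on a small geodesic ball $\Omega\ni p_0$ on which $u<0$ the right-hand side above is $\le 0$; hence $\Delta_F w\le 0$ on $\Omega$ in the barrier sense while $w$ attains an interior minimum at $p_0$, so Calabi's barrier strong maximum principle (Lemma \ref{lem:barrier-max}) forces $w\equiv\epsilon$ on $\Omega$. Then $u=\epsilon R$ is smooth on $\Omega$, and substituting $\Delta_f u=\epsilon\,\Delta_f R=\epsilon(2\rho R-2|Rc|^2)$ back into \eqref{eq:Delta_f-u} gives $2\epsilon\,|Rc|^2\ge v\ge 0$; since $\epsilon<0$ this forces $|Rc|^2\equiv 0$, hence $R\equiv 0$ on $\Omega$, contradicting $R>0$. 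If instead $\epsilon$ is not attained, a minimizing sequence must escape every compact set, so $w(p_i)\to\epsilon$ along a sequence $p_i\to\infty$ with $\epsilon<0$, directly contradicting the hypothesis \eqref{eq:lim_u/R}. In either case we reach a contradiction, so $w\ge 0$, i.e. $u\ge 0$ on $M^n$.

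For part (ii), assume in addition that $v>0$ and suppose, seeking a contradiction, that $u(p_0)=0$ for some $p_0\in M^n$; by part (i), $p_0$ is a global minimum of $u$ and $u\ge 0$ near $p_0$. Applying \eqref{eq:Delta_f-u} at $p_0$ in the barrier sense, for each $\delta>0$ there is a smooth upper support function $\phi$ with $\phi\ge u$ near $p_0$, $\phi(p_0)=0$, and $\Delta_f\phi(p_0)\le 2\rho u(p_0)-v(p_0)+\delta=-v(p_0)+\delta$. Since $\phi\ge u\ge 0=\phi(p_0)$, the point $p_0$ is an interior minimum of $\phi$, so $\nabla\phi(p_0)=0$ and $\Delta\phi(p_0)\ge 0$, whence $\Delta_f\phi(p_0)=\Delta\phi(p_0)\ge 0$. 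Combining, $0\le -v(p_0)+\delta$ for every $\delta>0$, so $v(p_0)\le 0$, a contradiction; hence $u>0$ on $M^n$.

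The underlying computation and the two maximum-principle invocations are routine and essentially already appear in Section \ref{sec:ac-halfPIC}, so the only genuinely delicate point I anticipate is confirming that the quotient inequality holds in the barrier sense, i.e. that the cancellation of the gradient terms is compatible with support functions. The ``infimum at infinity'' case costs nothing here since it is built directly into hypothesis \eqref{eq:lim_u/R}, but it is worth emphasizing that this is precisely the step at which the asymptotically conical geometry will enter when the lemma is applied.
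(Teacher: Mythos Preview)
Your proposal is correct and follows essentially the same approach as the paper: compute $\Delta_F(u/R)\le R^{-2}(2u|Rc|^2-Rv)$ with $F=f-2\log R$, argue by contradiction splitting into the interior-infimum and infimum-at-infinity cases, and apply Calabi's barrier strong maximum principle in the former. Your Case 1 concludes slightly differently---you substitute $u=\epsilon R$ back into \eqref{eq:Delta_f-u} rather than reading off $u|Rc|^2=0$ directly from the quotient inequality---but this is an equivalent reformulation, and your extra care with the barrier-sense verification in part (ii) is a welcome clarification of what the paper leaves implicit.
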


\begin{proof}
	(i) First of all, we shall compute the differential equation of $u/R$ in the barrier sense. By direct computations, we have
	\begin{equation} \label{eq:u/R}
		\begin{split}
			\Delta_f\left(\frac{u}{R}\right) = \frac{1}{R} \Delta_f u
			- \frac{u}{R^2} \Delta_f R
			- \frac{2}{R^2}\langle \nabla u, \nabla R \rangle
			+ \frac{2u}{R^3} |\nabla R|^2.
		\end{split}
	\end{equation}
	Let $F = f - 2\log R$. Substituting \eqref{eq:Delta_f-u} and the formula for $\Delta_f R$ in Lemma \ref{lem:4Dequations} into \eqref{eq:u/R}, we obtain
	\begin{equation} \label{eq:Delta_F}
		\begin{split}
			\Delta_F \left(\frac{u}{R}\right) &= \frac{1}{R} \Delta_f u - \frac{u}{R^2} \Delta_f R \\
			&\leq \frac{1}{R} (2\rho u - v) - \frac{u}{R^2} (2\rho R - 2 |Rc|^2) \\
			&\leq \frac{1}{R^2} (2u |Rc|^2-Rv).
		\end{split}
	\end{equation}
	
	Now, we prove by contradiction again.  
	Suppose the lemma fails. Then the set
	\[
	M^- := \{ p \in M : u(p) < 0 \}
	\]
	is nonempty, and
	\begin{equation*} \label{eq:delta<0}
		\delta := \inf_{M}\left( \frac{u}{R} \right) < 0.
	\end{equation*}
	
	\noindent{\bf Case 1: Interior infimum.}  
	Suppose the negative infimum $\delta$ is attained at some $p_0 \in M$. Then, in a neighborhood $\Omega \ni p_0$, we have $u<0$ on $\Omega$. Thus, by \eqref{eq:Delta_F}, we have
	\begin{equation*}
		\begin{split}
			\Delta_F \left(\frac{u}{R}\right)
			&\leq \frac{1}{R^2} (2u |Rc|^2-Rv) \leq 0
		\end{split}
	\end{equation*}
	in the barrier sense on $\Omega$. By Calabi's barrier strong maximum principle (Lemma \ref{lem:barrier-max}), the ratio $u/R$ must be constant on $\Omega$. Since $R>0$, $v\geq 0$ and $u<0$ on $\Omega$, it follows that  
	\[
	u |Rc|^2 \equiv 0,
	\]
	which forces $|Rc| = 0$ on $\Omega$, a contradiction to $R>0$. 
	Thus, \textbf{Case 1} is ruled out.
	
	\smallskip
	\noindent{\bf Case 2: Infimum at infinity.}  
	Assume instead that $u/R$ attains its negative infimum at infinity. Then there exists a sequence $\{p_i\}\subset M$ with $p_i \to \infty$ such that
	\begin{equation*} \label{eq:lim_delta<0}
		\lim_{i\to\infty} \frac{u}{R}(p_i) = \delta < 0.
	\end{equation*}
	However, this contradicts our assumption \eqref{eq:lim_u/R}. 
	Thus, \textbf{Case 2} is impossible.  
	
	\smallskip
	Combining both cases, we conclude that $M^-=\emptyset$, so $u\geq 0$ on $M^n$.
	
	\smallskip
	(ii) We prove $u>0$ by contradiction.  Suppose $u(p_0) = 0$ at some point $p_0 \in M$. Then, since $u\geq 0$ on $M^n$, it follows that $u$ attains its minimum at $p_0$. Then, at $p_0$, in the barrier sense  
	\begin{equation*}
		\begin{split}
			0 \leq \Delta_f u \leq 2\rho u -v <0,
		\end{split}
	\end{equation*}
	where we have used the assumption $v > 0$. This is a contradiction.	
\end{proof}

\begin{rmk}
	In particular, if an expanding Ricci soliton $(M^n,g,f)$ is asymptotically conical, and if its asymptotic cone has positive scalar curvature and satisfies $u \geq 0$, then $(M^n,g,f)$ fulfills the asymptotic condition \eqref{eq:lim_u/R} in Lemma \ref{lem:general}. 
\end{rmk}

\begin{rmk}
	Similarly, if a steady Ricci soliton $(M^n,g,f)$ is asymptotic to a cylinder with $u \geq 0$, then $(M^n,g,f)$ satisfies the asymptotic condition \eqref{eq:lim_u/R} in Lemma \ref{lem:general}.
\end{rmk}

\subsection{The proof of Theorem \ref{thm:ac_UPIC}} 
In this subsection, we apply Lemma \ref{lem:general} to prove Theorem \ref{thm:ac_UPIC}.  Recall that, by \emph{uniformly PIC} we mean that $(M^4, g)$ has PIC and satisfies in addition the pointwise pinching condition  
\begin{equation*}
	\max\{A_3,B_3,C_3\} \leq \Lambda \min\{A_1+A_2, C_1+C_2\},
\end{equation*}
for some constant $\Lambda \geq 1$. 

\begin{proof} [Proof of Theorem \ref{thm:ac_UPIC}]
	First of all, it is easy to see that the inequality $B_3^2 \leq A_1C_1$ implies nonnegative curvature operator $Rm\geq 0$ for $(M^4, g)$; see, e.g., \cite[Lemma 4.4]{Cho-Li:23} for a proof. Hence, to prove Theorem \ref{thm:ac_UPIC}, it suffices to establish the inequality $B_3^2 \leq A_1C_1$. 

	We shall prove the inequality $B_3^2 \leq A_1C_1$ in three steps as in \cite{Brendle:14}. The main computations in each step below essentially come from Brendle's work \cite{Brendle:14}, in which he used the pinching estimates of Hamilton \cite{Ha:97} to show that a gradient steady Ricci soliton with UPIC must have positive curvature operator. 

	\smallskip
	\noindent{\bf Step 1.} To show $ A_3 \leq (6\Lambda^2 + 1)A_1$ and $C_3 \leq (6\Lambda^2 + 1)C_1.$
	\smallskip

	By the same computations as in \cite[Lemma 6.1]{Brendle:14}, we have
	\begin{equation*}
		\Delta_f [(6\Lambda^2+1)A_1 - A_3] \leq 2\rho [(6\Lambda^2+1)A_1 - A_3] - A_3^2.
	\end{equation*}
	
	Moreover, since the asymptotic cone $\mathcal{C}$ is a non-flat  Euclidean cone, by \eqref{eq:4Dcone_A&C} and \eqref{eq:4Dcone_B}, on the cone $\mathcal{C}$, we have
	\begin{equation*}
		\bar{A}_i=\bar{C}_j=\bar{B}_k,\quad 1\leq i,j,k \leq 3,
	\end{equation*}
	where the bar denotes the corresponding curvature quantities on the asymptotic cone $\mathcal{C}$.
	Hence, the inequality $ A_3 \leq (6\Lambda^2 + 1)A_1$ follows from Lemma \ref{lem:general} with $u=(6\Lambda^2+1)A_1 - A_3$. Similarly, we have  $C_3 \leq (6\Lambda^2 + 1)C_1.$
	
	\smallskip
	\noindent{\bf Step 2.} To show  $ 4B_3^2 \leq (A_1+A_2)(C_1+C_2).$
	
	\smallskip
	Following \cite{Brendle:14}, we prove by contradiction. Suppose that 
	\[
	\gamma = \sup_{M} \frac{2B_3}{\sqrt{(A_1+A_2)(C_1+C_2)}} > 1.
	\]
	Let $w_1:=\tfrac{1}{2}\sqrt{(A_1+A_2)(C_1+C_2)}$. By the same computations as in \cite[Lemma 6.2]{Brendle:14}, we can find a positive constant $\delta_1>0$ such that
	\begin{equation*}
		\Delta_f (\gamma w_1 - B_3-\delta_1 R) \leq 2\rho (\gamma w_1 - B_3-\delta_1 R) - \delta_1 |Rc|^2.
	\end{equation*}
	
	On the other hand,  since $\gamma>1$, on the asymptotic non-flat Euclidean cone $\mathcal{C}$ we can choose $\bar{\delta}_1>0$ small enough such that
	\begin{equation*}
		\gamma \bar{w}_1 - \bar{B}_3-\bar{\delta}_1 \bar{R} > 0.
	\end{equation*}
	Hence, by Lemma \ref{lem:general} with $u=\gamma w_1 - B_3 - \delta_1 R$, we obtain $\gamma w_1 - B_3 - \delta_1 R \geq 0$, which contradicts the definition of $\gamma$. Therefore, $\gamma \leq 1$, completing the proof of {\bf Step 2}.

	\smallskip
	\noindent{\bf Step 3.} To show $ B_3^2 \leq A_1C_1.$
	
	\smallskip
	Again, following \cite{Brendle:14}, we argue by contradiction. Suppose that 
	\[
	\gamma^{\prime} = \sup_{M} \frac{B_3}{\sqrt{A_1C_1}} > 1.
	\]
	Let $w_2:=\sqrt{A_1C_1}$. Then by the same computations as in \cite[Proposition 6.3]{Brendle:14}, we can find a positive constant $\delta_2>0$ such that
	\begin{equation*}
		\Delta_f (\gamma^{\prime} w_2 - B_3-\delta_2 R) \leq 2\rho (\gamma^{\prime} w_2 - B_3-\delta_2 R) - \delta_2 |Rc|^2.
	\end{equation*}
	
	On the other hand, since $\gamma^{\prime}>1$, on the asymptotic non-flat Euclidean cone $\mathcal{C}$ we can choose $\bar{\delta}_2>0$ small enough such that 
	\begin{equation*}
		\gamma^{\prime} \bar{w}_2 - \bar{B}_3-\bar{\delta}_2 \bar{R} > 0.
	\end{equation*}	
	Hence, as in the proof of {\bf Step 2}, {\bf Step 3} follows from Lemma \ref{lem:general} with $u=\gamma^{\prime} {w}_2 - {B}_3-{\delta}_2 {R} $.  Therefore, $(M^4,g,f)$ has nonnegative curvature operator.  

	Finally, by applying the same argument as in the proof of Theorem \ref{thm:ac_Wflat} for showing $Rm>0$, it follows that $(M^4, g, f)$ has positive curvature operator. This concludes the proof of Theorem \ref{thm:ac_UPIC}.
\end{proof}

\begin{rmk} 
	The assumption of the asymptotic cone being a (non-flat) Euclidean cone is used in the proofs of Step 2 and Step 3. Indeed, by \eqref {eq:4Dcone_A&C} and \eqref{eq:4Dcone_B}, requiring either $ 4B_3^2 \leq (A_1+A_2)(C_1+C_2)$ or $B_3^2 \leq A_1C_1$ to hold on the asymptotic cone forces the link of the cone to be a (spherical) space form.
\end{rmk}

\subsection{Additional curvature pinching results}	
Furthermore, applying Lemma \ref{lem:general}, we obtain several additional curvature pinching results for asymptotically conical expanding Ricci solitons, which are analogous to results previously established for ancient solutions by Li-Wang \cite{Li-Wang:20}, Bamler-Cabezas-Rivas-Wilking \cite{Bamler-CR-Wilking:19}, Li-Ni \cite{Li-Ni:20}, Li \cite{Li:24}, Cho-Li \cite{Cho-Li:23}, Chen \cite{ChenZN:25}, and others.

\begin{thm} \label{thm:curv_pinching}
	Let $(M^n,g,f)$ be an $n$-dimensional non-compact, asymptotically conical, gradient expanding  Ricci soliton.
	\begin{itemize}
		\item[(a)] Suppose $(M^n,g,f)$ has $2$-nonnegative curvature operator and the asymptotic cone has positive scalar curvature. Then, $(M^n,g,f)$ must have positive curvature operator.
		
		\item[(b)] Suppose $(M^n,g,f)$ has WPIC1\footnote{See, e.g., \cite[Page 97]{Bamler-CR-Wilking:19} or \cite{Brendle:10,Brendle-Schoen:09} for the definitions of WPIC1 and WPIC2.} and that the asymptotic cone has positive scalar curvature and WPIC2. Then, $(M^n,g,f)$ must have WPIC2.
		
		\item[(c)] Suppose $(M^n,g,f)$, $n\geq 5$, has WPIC and that the asymptotic cone has positive scalar curvature and $2$-nonnegative Ricci curvature. Then, $(M^n,g,f)$ must have $2$-nonnegative Ricci curvature.
		
		\item[(d)] Suppose $(M^n,g,f)$ is K\"ahler and has nonnegative orthogonal bisectional curvature\footnote{See, e.g., \cite[Page 28]{Li-Ni:20} for the definition of nonnegative orthogonal bisectional curvature.}. If the asymptotic cone has positive scalar curvature and WPIC2, then $(M^n,g,f)$ must have WPIC2.
		
		\item[(e)] Suppose $(M^n,g,f)$, $n\geq 9$, has UPIC.  If the asymptotic cone has positive scalar curvature and WPIC2, then $(M^n,g,f)$ must have WPIC2.
	\end{itemize}
\end{thm}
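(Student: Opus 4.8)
The plan is to deduce each of (a)--(e) from Lemma~\ref{lem:general}. In each case one selects a Lipschitz curvature function $u$ on $M^n$ whose nonnegativity is equivalent to the asserted weak curvature condition, together with a nonnegative reaction term $v$. Explicitly: in (a) take $u=\mu$, the smallest eigenvalue of the curvature operator $Rm$ (so $u\ge0$ means $Rm\ge0$); in (c) take $u=\lambda_1+\lambda_2$, the sum of the two smallest Ricci eigenvalues (so $u\ge0$ means the Ricci curvature is $2$-nonnegative); and in (b), (d), (e) take $u$ to be the infimum over orthonormal four-frames $\{e_1,e_2,e_3,e_4\}$ and over $\lambda,\mu\in[0,1]$ of
\[
R_{1313}+\lambda^2R_{1414}+\mu^2R_{2323}+\lambda^2\mu^2R_{2424}-2\lambda\mu R_{1234}
\]
(so $u\ge0$ means WPIC2). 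In every case the asymptotic cone is assumed to have positive scalar curvature and to satisfy ``$u\ge0$''; for (b)--(e) this is part of the hypothesis, while for (a) it follows because $2$-nonnegativity of $Rm$ is scale-invariant and closed, hence passes to the cone, and on any cone one has $R(\partial_r,\cdot,\cdot,\cdot)\equiv0$, so $Rm$ has an $(n-1)$-dimensional kernel, which together with $2$-nonnegativity forces $Rm\ge0$ on the cone and hence $\mu\equiv0$ there. Therefore, by the Remark following Lemma~\ref{lem:general}, the asymptotic hypothesis \eqref{eq:lim_u/R} holds, and it remains to establish the differential inequality $\Delta_f u\le 2\rho u-v$ in the barrier sense.

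This inequality is, in each part, the analogue for the soliton equation \eqref{eq:Riccisoliton} of the reaction-term estimate showing that the relevant curvature condition is preserved under Ricci flow--computations due respectively to Li-Wang \cite{Li-Wang:20}, Bamler-Cabezas-Rivas-Wilking \cite{Bamler-CR-Wilking:19}, Li-Ni \cite{Li-Ni:20}, Li \cite{Li:24}, Cho-Li \cite{Cho-Li:23}, and Chen \cite{ChenZN:25}. I would transcribe each of these into the elliptic setting by replacing $\partial_t$ and $\Delta$ with $\Delta_f$ and invoking Lemma~\ref{lem:4Dequations} (taking care, in dimension four, of the inner product normalization on $\wedge^2(M)$; cf. the Remark after Lemma~\ref{lem:4Dequations}), and I would verify in each case that the remainder $v$ is genuinely nonnegative under the stated hypotheses on $(M^n,g,f)$ (WPIC1 in (b), WPIC in (c), the K\"ahler condition together with nonnegative orthogonal bisectional curvature in (d), UPIC in (e)). Since $u$ is only Lipschitz--being a minimum, or infimum, of smooth functions--one argues as in the proofs of Lemma~\ref{lem:Delta_mu} and Theorem~\ref{thm:ac_A_2>0}: at a fixed $p_0$, choose an extremal frame (and extremal parameters $\lambda,\mu$) realizing $u(p_0)$, extend the frame by parallel transport along radial geodesics from $p_0$, and bound $\Delta_f u$ from above, in Calabi's barrier sense, by the corresponding expression for the extended test function.

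With the inequality $\Delta_f u\le 2\rho u-v$ in hand, Lemma~\ref{lem:general}(i) gives $u\ge0$ on $M^n$, which is exactly the conclusion in (b), (c), (d), (e), and in (a) says that $(M^n,g,f)$ has nonnegative curvature operator. To finish (a) I would upgrade $Rm\ge0$ to $Rm>0$ exactly as in the last step of the proof of Theorem~\ref{thm:ac_Wflat}: passing to the canonical immortal solution $g(t)=(1+t)\Phi(t)^{\ast}g$, nonnegativity of $Rm$ makes the reaction term $Rm^2+Rm^{\sharp}$ nonnegative, so Hamilton's strong maximum principle for systems forces $\ker(Rm)$ to have locally constant rank and to be parallel; if $Rm$ vanished in some direction at a point, Hamilton's splitting lemma \cite[Lemma 9.1]{Ha:86} would give a local--hence, by real-analyticity and completeness, global--de Rham splitting off a Euclidean factor, making the asymptotic cone split off a line, which is impossible for a cone with an isolated conical singularity unless the cone is flat, contradicting positivity of its scalar curvature.

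The main obstacle is the barrier-sense differential inequality $\Delta_f u\le 2\rho u-v$: one must faithfully adapt several genuinely distinct reaction computations from the ancient-solutions literature to the expanding-soliton setting, pin down the correct nonnegative remainder $v$ in each case and check $v\ge0$ under the given pinching hypotheses, and handle throughout the non-smoothness of $u$ via parallel test frames and Calabi's barrier principle, just as in Lemmas~\ref{lem:Delta_lambda1/R} and \ref{lem:Delta_mu}. Once these inequalities are secured, the rest--the application of Lemma~\ref{lem:general} and, for (a), the rigidity step--follows the patterns already established in this paper.
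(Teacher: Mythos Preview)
Your approach is essentially the same as the paper's: each part is reduced to Lemma~\ref{lem:general} by selecting the appropriate curvature quantity $u$ and invoking the relevant reaction-term estimate from the literature (\cite{Li-Wang:20}, \cite{Bamler-CR-Wilking:19}, \cite{Li-Ni:20}, \cite{Li:24}, \cite{Cho-Li:23}, \cite{ChenZN:25}), with the barrier argument via parallel test frames handling the non-smoothness of $u$. Your treatment of the asymptotic-cone condition in (a)---observing that the $(n-1)$-dimensional kernel of $Rm$ on a cone together with $2$-nonnegativity forces $Rm\ge0$ there---and your upgrade from $Rm\ge0$ to $Rm>0$ via Hamilton's strong maximum principle and the splitting argument are both more explicit than the paper's sketch but follow the pattern already set in Theorem~\ref{thm:ac_Wflat}.

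There is one substantive difference, in part (d). The paper does \emph{not} pass directly from nonnegative orthogonal bisectional curvature to WPIC2; it proceeds in two steps. First, using \cite[Lemma~6.1]{Li-Ni:20}, the least \emph{bisectional} curvature satisfies \eqref{eq:Delta_f-u}, so Lemma~\ref{lem:general} upgrades the hypothesis to nonnegative (full) bisectional curvature. Only then, invoking the argument of \cite[Theorem~3.3]{Li:24}---which requires nonnegative bisectional curvature, not merely orthogonal bisectional curvature, to make the reaction term for the complex sectional curvature nonnegative---does one obtain \eqref{eq:Delta_f-u} for the WPIC2 function $u$. Your single-step plan for (d) may therefore stall: under only nonnegative \emph{orthogonal} bisectional curvature it is not clear that the remainder $v$ for the WPIC2 function is nonnegative, so the intermediate bootstrap through full bisectional nonnegativity appears to be genuinely needed.
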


\begin{proof}[Sketch of Proof] 
	It suffices to verify, in each case, that the corresponding least curvature eigenvalue satisfies the differential equation \eqref{eq:Delta_f-u} in Lemma \ref{lem:general}. 
	
	\smallskip
	(a) By the same computations as in \cite[Theorem 27]{Li-Wang:20} (see also \cite[Lemma 2.4]{ChenHW:91}), the differential inequality \eqref{eq:Delta_f-u} in Lemma \ref{lem:general} for the least eigenvalue of the Riemann curvature operator is satisfied.
	
	\smallskip
	(b) By the same computations as in \cite[Lemma 4.2]{Bamler-CR-Wilking:19}, the differential inequality \eqref{eq:Delta_f-u} in Lemma \ref{lem:general} for the least eigenvalue of the complex sectional curvature is satisfied. 
	
	\smallskip
	(c) By the same computations as in \cite[Proposition 4.2]{Li-Ni:20}, the differential inequality \eqref{eq:Delta_f-u} in Lemma \ref{lem:general} for the sum of the two least eigenvalues of the Ricci tensor is satisfied.
	
	\smallskip
	(d) First of all, by the same computations as in \cite[Lemma 6.1]{Li-Ni:20}, the differential inequality \eqref{eq:Delta_f-u} in Lemma \ref{lem:general} for the least bisectional curvature is satisfied. Hence, by Lemma \ref{lem:general}, the expanding K\"ahler-Ricci soliton has nonnegative bisectional curvature. Moreover, by the same arguments as in \cite[Theorem 3.3]{Li:24}, one can show that the smallest eigenvalue of the complex sectional curvature also satisfies the differential inequality \eqref{eq:Delta_f-u} in Lemma \ref{lem:general}. 
	
	\smallskip
	(e) Finally, by the same computations as in \cite[Theorem 3.2]{Cho-Li:23} for $n \geq 12$ and in \cite{ChenZN:25} for $9 \leq n \leq 11$, the differential inequality \eqref{eq:Delta_f-u} in Lemma \ref{lem:general} for the least eigenvalue of the complex sectional curvature is satisfied. 
\end{proof}

\begin{rmk}
	Other results that have been derived using B.-L. Chen's lemma (see, e.g., \cite[Corollary 2.4]{Cho-Li:23} or \cite[Lemma 2.6]{Cao-Xie:25}) for ancient solutions can similarly be extended to the setting of asymptotically conical gradient expanding Ricci solitons, following the same approach as above.
\end{rmk}

\appendix
\section{Curvature properties of cones} \label{sec:appendix}

In this appendix, we examine the elementary relations between the curvature tensor of an $n$-dimensional cone, $n\geq 3$,  with vanishing Weyl tensor and the curvature tensor of its link. Moreover, we analyze the curvature operator decomposition of a $4$-dimensional cone. The resulting facts were used in previous sections. 

First of all, let us recall basic curvature relations between a cone and its link. For $n\geq 3$, consider any $n$-dimensional cone  
\begin{equation*}
	\mathcal{C}^n := [0,\infty) \times \Sigma^{n-1}
\end{equation*}
equipped with the Riemannian metric
\begin{equation*}
	g_{c} = dr^2 + r^2 \bar{g},
\end{equation*}
where $(\Sigma^{n-1}, \bar{g})$ is a closed $(n-1)$-dimensional Riemannian manifold.
Let $\{\bar{e}_a\}_{a\geq 2}$ be a local orthonormal frame of $T\Sigma^{n-1}$.  
We define 
\begin{equation*}
e_1 = \partial_r, \qquad e_a = r^{-1}\bar{e}_a \quad (a \geq 2),
\end{equation*}
so that $\{e_i\}_{i\geq 1}$ forms a local orthonormal frame of $T\mathcal{C}$ with respect to $g_c$. Then,  the Riemann curvature tensor\footnote{For the curvature tensor formula of a general warped product space, see O'Neill \cite{O'Neill:83}.} $Rm$ of $(\mathcal{C}^n,g_c)$ is given by

\begin{equation} \label{eq:coneRm} 
\begin{aligned} 
	R_{1ijk} &= 0, \quad 1 \leq i,j,k \leq n, \\
	R_{abcd} &= r^{-2} \left[ \bar{R}_{abcd} - \big( \bar{g}_{ac}\bar{g}_{bd} - \bar{g}_{ad}\bar{g}_{bc} \big) \right], \quad 2 \leq a,b,c,d \leq n,
\end{aligned}
\end{equation}
where $\bar{R}_{abcd}$ denotes the curvature tensor of the metric $\bar{g}$ of the link $\Sigma^{n-1}$. Moreover,  the Ricci tensor $Rc$ of $(\mathcal{C}^n,g)$ is given by
\begin{equation} \label{eq:coneRc}
\begin{aligned} 
	R_{1i} &= 0, \quad 1\leq i\leq n, \\
	R_{ab} &= r^{-2} \left[ \bar{R}_{ab} - (n-2)\bar{g}_{ab}\right] , \quad 2 \leq a,b \leq n.
\end{aligned}
\end{equation}
and the scalar curvatures of $(\mathcal{C}^n,g)$ and $(\Sigma^{n-1},\bar{g})$ are related by
\begin{equation} \label{eq:coneR}
	R = r^{-2} \left[ \bar{R} - (n-1)(n-2) \right].
\end{equation}
In addition, the nonzero Weyl curvature tensor $W$ of $(\mathcal{C}^n,g)$ is given by 

\begin{equation}\label{eq:coneW}
\begin{aligned}
	W_{1a1b} &= - \frac {1}{(n-2)r^2} \left( \bar{R}_{ab} - \frac{\bar{R}} {(n-1)}\bar{g}_{ab}\right), \quad 2 \leq a, b \leq n, \\
	W_{abcd} &= r^{-2} \ \! \overline{W}_{abcd}, \quad 2 \leq a,b,c,d \leq n. 
\end{aligned}
\end{equation}

\subsection{Curvature tensor of cones with vanishing Weyl tensor} 
First, consider 
\begin{equation*}
	\mathcal{C}^3 = (0,\infty) \times \Sigma^2, \qquad g_c = dr^2 + r^2 \bar{g},
\end{equation*}
where $(\Sigma^2,\bar{g})$ is a closed Riemannian surface.  Then, the nonzero Riemann curvature tensor components of $(\mathcal{C}^3,g_c)$ are given by
\begin{equation*}
	R_{abcd} = r^{-2} \left[ \bar{R}_{abcd} - \big( \bar{g}_{ac}\bar{g}_{bd} - \bar{g}_{ad}\bar{g}_{bc} \big) \right], 
	\qquad 2 \leq a,b,c,d \leq 3.
\end{equation*}
Since $\dim \Sigma = 2$, we have
\begin{equation*}
	\bar{R}_{abcd} = \bar{K}\big(\bar{g}_{ac}\bar{g}_{bd} - \bar{g}_{ad}\bar{g}_{bc}\big),
\end{equation*}
where $\bar{K}$ is the Gaussian curvature of $(\Sigma^2,\bar{g})$. Hence
\begin{equation*}
	R_{abcd} = r^{-2}\left(\bar{K} - 1 \right)
	\big(\bar{g}_{ac}\bar{g}_{bd} - \bar{g}_{ad}\bar{g}_{bc}\big).
\end{equation*}

Therefore, with respect to the basis 
\begin{equation*}
	\{ e_1 \wedge e_2, e_3 \wedge e_1, e_2 \wedge e_3 \}
\end{equation*}
of $\wedge^2 T \mathcal{C}$, the curvature operator is diagonal with eigenvalues
\begin{equation*} \label{eq:3Dcone}
	R_{1212}= 0, \quad R_{1313}= 0, \quad R_{2323}= r^{-2}\left(\bar{K}- 1\right).
\end{equation*}

In summary, we have the following basic fact in dimension $n=3$. 
\begin{lem} \label{lem:3Dcone}
	Let $ \mathcal{C}^3 := [0,\infty) \times \Sigma^2 $ be a $3$-dimensional cone equipped with the Riemannian metric $g_{c} = dr^2 + r^2 \bar{g}_{\Sigma}$. Then, the only possible nonzero eigenvalue of the curvature operator $Rm$ is  the principal sectional curvature 
	\[m:= r^{-2}\left(\bar{K}- 1\right),\]
	where $\bar{K}$ is the Gaussian curvature of $(\Sigma^2,\bar{g})$. In particular, $(\mathcal{C}^3, g_{c} )$ has nonnegative curvature operator $Rm\geq 0$ if and only if it has nonnegative scalar curvature $R\geq 0$, or equivalently, if and only if $\bar{K}\geq 1$. 
\end{lem}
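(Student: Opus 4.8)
The plan is to derive the statement directly from the general warped-product curvature formula \eqref{eq:coneRm}, specialized to $n=3$, combined with the rigidity of the curvature tensor of a surface. First I would note that \eqref{eq:coneRm} gives $R_{1ijk}=0$ for all $i,j,k$ in the orthonormal frame $e_1=\partial_r$, $e_a=r^{-1}\bar e_a$ ($a=2,3$); by the symmetries of the curvature tensor this means every component of $Rm$ with at least one index equal to $1$ vanishes, so every $2$-plane containing the radial direction $\partial_r$ is flat, and only the components $R_{abcd}$ with $2\le a,b,c,d\le 3$ survive.

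Next I would use that $(\Sigma^2,\bar g)$ is $2$-dimensional, whence $\bar R_{abcd}=\bar K(\bar g_{ac}\bar g_{bd}-\bar g_{ad}\bar g_{bc})$ with $\bar K$ the Gaussian curvature. Substituting this into \eqref{eq:coneRm} yields
\[
R_{abcd}=r^{-2}(\bar K-1)(\bar g_{ac}\bar g_{bd}-\bar g_{ad}\bar g_{bc}),\qquad 2\le a,b,c,d\le 3,
\]
so in the orthonormal frame the only independent nonvanishing component is $R_{2323}=r^{-2}(\bar K-1)=:m$, while all ``mixed'' components such as $R_{2233}$ vanish.

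Then I would read off the curvature operator: with respect to the basis $\{e_1\wedge e_2,\ e_3\wedge e_1,\ e_2\wedge e_3\}$ of $\wedge^2 T\mathcal{C}$, the vanishing of all components carrying an index $1$ and of the mixed components among $\{2,3\}$ shows that $Rm$ is diagonal, with eigenvalues $R_{1212}=0$, $R_{1313}=0$, and $R_{2323}=m$. Hence the only possible nonzero eigenvalue of $Rm$ is $m=r^{-2}(\bar K-1)$. Finally, since in dimension three $R=2(R_{1212}+R_{1313}+R_{2323})=2m$ (equivalently, by \eqref{eq:coneR} with $n=3$, $R=r^{-2}(\bar R-2)=2r^{-2}(\bar K-1)$), the sign of $R$, the sign of $m$, and the sign of $\bar K-1$ all agree; in particular $Rm\ge 0\iff R\ge 0\iff\bar K\ge 1$, which is the assertion.

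I do not anticipate any real obstacle: the lemma is an immediate algebraic consequence of \eqref{eq:coneRm}, \eqref{eq:coneR}, and the constant-curvature form of a surface's Riemann tensor. The only points requiring a little care are confirming that the curvature operator is genuinely diagonal in the chosen basis of $\wedge^2 T\mathcal{C}$ (i.e.\ that off-diagonal terms like $R_{1223}$ truly vanish, which follows from the first line of \eqref{eq:coneRm}) and keeping the normalization of the trace relating $R$ to the sectional curvatures consistent with \eqref{eq:coneR}.
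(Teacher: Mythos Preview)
Your proposal is correct and follows essentially the same approach as the paper: specialize the cone curvature formula \eqref{eq:coneRm} to $n=3$, use the constant-curvature form of the surface Riemann tensor to obtain $R_{abcd}=r^{-2}(\bar K-1)(\bar g_{ac}\bar g_{bd}-\bar g_{ad}\bar g_{bc})$, and read off the diagonal curvature operator in the basis $\{e_1\wedge e_2,\ e_3\wedge e_1,\ e_2\wedge e_3\}$. Your closing check that $R=2m$ via \eqref{eq:coneR} is a minor elaboration beyond what the paper spells out, but the argument is otherwise identical.
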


Meanwhile, for $n\geq 4$, consider any $n$-dimensional cone  
\begin{equation*}
	\mathcal{C}^n := [0,\infty) \times \Sigma^{n-1}
\end{equation*}
equipped with the Riemannian metric
\begin{equation*}
	g_{c} = dr^2 + r^2 \bar{g},
\end{equation*}
where $(\Sigma^{n-1}, \bar{g})$ is a closed $(n-1)$-dimensional Riemannian manifold. Then, by \eqref{eq:coneW}, we see that $\mathcal{C}$ has vanishing Weyl curvature $W=0$ if and only if its link $\Sigma$ is a space form. Thus,  we immediately have the following

\begin{lem} \label{lem:W=0cone}
	Let $\mathcal{C}^n := [0,\infty) \times \Sigma^{n-1} $ be an $n$-dimensional ($n\geq 4$) cone with nonnegative scalar curvature, equipped with the Riemannian metric $g_{c} = dr^2 + r^2 \bar{g}_{\Sigma}$. Then, $(\mC^n, g_c)$ is locally conformally flat but non-flat if and only if the link $(\Sigma^{n-1}, \bar{g})$ is a spherical space form, i.e., up to scaling, $(\Sigma^{n-1},\bar{g})$ is isometric to a quotient of the round sphere $\rS^{n-1}$. In particular, if $(\mC^n, g_c)$ is locally conformally flat and has nonnegative scalar curvature, then it has nonnegative curvature operator $Rm\geq 0$. 
\end{lem}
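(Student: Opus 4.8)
The plan is to reduce the statement to the explicit cone curvature formulas \eqref{eq:coneRm}--\eqref{eq:coneW} together with a classical structure result for Einstein, conformally flat manifolds. Since $n\geq 4$, local conformal flatness of $(\mathcal{C}^n,g_c)$ is equivalent to $W\equiv 0$. First I would read off from \eqref{eq:coneW} that $W\equiv 0$ forces, via the components $W_{1a1b}$, that the link is Einstein, $\bar R_{ab}=\tfrac{\bar R}{n-1}\,\bar g_{ab}$, and, via the components $W_{abcd}$, that $\overline{W}\equiv 0$ (the latter being automatic when $n=4$, since $\Sigma^{3}$ has no Weyl tensor); conversely these two conditions clearly make $W$ vanish. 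Thus $(\mathcal{C}^n,g_c)$ is locally conformally flat if and only if $(\Sigma^{n-1},\bar g)$ is Einstein and conformally flat, and by the classical facts that an Einstein $3$-manifold has constant sectional curvature and that an Einstein, conformally flat manifold of dimension $\geq 3$ has constant sectional curvature (Schur's lemma together with the vanishing of both the Weyl tensor and the traceless Ricci tensor), this holds exactly when $(\Sigma^{n-1},\bar g)$ is a space form, of some constant sectional curvature $\kappa$.

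Next I would substitute $\bar R_{abcd}=\kappa(\bar g_{ac}\bar g_{bd}-\bar g_{ad}\bar g_{bc})$ into \eqref{eq:coneRm}, \eqref{eq:coneRc} and \eqref{eq:coneR}. This shows that the only nonzero curvature components of $(\mathcal{C}^n,g_c)$ are $R_{abcd}=r^{-2}(\kappa-1)(\bar g_{ac}\bar g_{bd}-\bar g_{ad}\bar g_{bc})$ on tangential indices, with all components involving $\partial_r$ vanishing; moreover $R=(n-1)(n-2)(\kappa-1)r^{-2}$, $R_{1i}=0$, and $R_{ab}=(n-2)(\kappa-1)r^{-2}\bar g_{ab}$. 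Hence $(\mathcal{C}^n,g_c)$ is flat iff $\kappa=1$, it has $R\geq 0$ iff $\kappa\geq 1$, and whenever $\kappa\geq 1$ its curvature operator is diagonalized with eigenvalues $0$ on planes containing the radial direction and $(\kappa-1)r^{-2}\geq 0$ on tangential planes; in particular $Rm\geq 0$, and likewise $Rc\geq 0$ with smallest eigenvalue $0$ occurring in the radial direction.

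Finally I would combine the two steps under the standing hypothesis $R\geq 0$. Local conformal flatness together with $R\geq 0$ forces $\kappa\geq 1$, which by the second step gives $Rm\geq 0$; this proves the ``in particular'' assertion. Adding non-flatness rules out $\kappa=1$, so $\kappa>1$, i.e. $\Sigma^{n-1}$ is a space form of positive curvature---a spherical space form, isometric up to scaling to a quotient of the round sphere $\rS^{n-1}$---and conversely such a link yields a locally conformally flat cone which, under $R\geq 0$, is non-flat except in the boundary case $\kappa=1$ (where the cone is a Euclidean space). The routine part here is the string of substitutions in the second step; the only non-elementary ingredient is the classical Einstein-plus-conformally-flat structure theorem invoked in the first step, and the only place demanding a little care is isolating the degenerate flat case $\kappa=1$ when phrasing the equivalence.
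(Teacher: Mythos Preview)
Your proposal is correct and follows essentially the same approach as the paper: the paper's proof is a one-line remark that, by \eqref{eq:coneW}, the cone has $W=0$ if and only if the link is a space form, from which the lemma is declared immediate. Your write-up simply fills in what the paper leaves implicit---deducing Einstein from $W_{1a1b}=0$, $\overline{W}=0$ from $W_{abcd}=0$, invoking the Einstein-plus-conformally-flat structure theorem, and then reading off $Rm\geq 0$ and the flat/non-flat dichotomy from \eqref{eq:coneRm}--\eqref{eq:coneR}; your care with the degenerate case $\kappa=1$ is appropriate.
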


\subsection{Curvature decomposition and curvature operator of 4D cones} 
Recall that, with respect to the decomposition
\begin{equation*} \label{eq:decompof2forms}
	\wedge^2 = \wedge^{+}\oplus \wedge^{-}
\end{equation*} 
on any oriented smooth Riemannian 4-manifold $(M^4, g)$, the curvature operator of $(M^4, g)$ admits the following decomposition:
\begin{equation} \label{eq:curv_op_decomp}
	Rm = 
	\begin{pmatrix}
		A & B \\
		B^t & C
	\end{pmatrix}
	=
	\begin{pmatrix}
		W^+ + \frac{R}{12}I& \mathring{Rc} \\
		\mathring{Rc} & W^- + \frac{R}{12}I
	\end{pmatrix},
\end{equation}
where $W^{\pm}$ denote the self-dual and anti-self-dual Weyl curvature tensors, respectively, and $\mathring{Rc}$ denotes the traceless Ricci tensor.

We may choose a basis for $\wedge_p^+$ and for $\wedge_p^-$ as follows:
\begin{equation*}
	\begin{split}
		\vphi^+_1 = \tfrac{1}{\sqrt{2}}(e_1\wedge e_2 + e_3\wedge e_4),  \\
		\vphi^+_2 = \tfrac{1}{\sqrt{2}}(e_1\wedge e_3 + e_4\wedge e_2),  \\
		\vphi^+_3 = \tfrac{1}{\sqrt{2}}(e_1\wedge e_4 + e_2\wedge e_3),  \\
	\end{split}
	\quad \quad
	\begin{split}
		\vphi^-_1 = \tfrac{1}{\sqrt{2}}(e_1\wedge e_2 - e_3\wedge e_4),  \\
		\vphi^-_2 = \tfrac{1}{\sqrt{2}}(e_1\wedge e_3 - e_4\wedge e_2),  \\
		\vphi^-_3 = \tfrac{1}{\sqrt{2}}(e_1\wedge e_4 - e_2\wedge e_3),  \\
	\end{split}
\end{equation*}
where $\{e_1,e_2,e_3,e_4\}$ is any positively oriented orthonormal tangent frame at a point $p$. Here, we have used the metric $g$ to identify the tangent space and the cotangent space at $p$. The inner product on 2-forms is defined by
\begin{equation} \label{eq:metric_2forms}
	\la X\wedge Y, V\wedge W \ra = \la X,V\ra \la Y,W\ra - \la X,W\ra \la Y,V\ra .
\end{equation}

Observe that, for the matrices $A$ and $C$ in \eqref{eq:curv_op_decomp}, we have
\begin{equation} \label{eq:matrixA&C}
	\begin{split}
		A_{11}=\tfrac{1}{2}\left( R_{1212}+R_{3434}+2R_{1234} \right), \\
		A_{22}=\tfrac{1}{2}\left( R_{1313}+R_{4242}+2R_{1342} \right), \\
		A_{33}=\tfrac{1}{2}\left( R_{1414}+R_{2323}+2R_{1423} \right), \\
	\end{split}
	\quad \quad
	\begin{split}
		C_{11}=\tfrac{1}{2}\left( R_{1212}+R_{3434}-2R_{1234} \right), \\
		C_{22}=\tfrac{1}{2}\left( R_{1313}+R_{4242}-2R_{1342} \right), \\
		C_{33}=\tfrac{1}{2}\left( R_{1414}+R_{2323}-2R_{1423} \right). \\
	\end{split}
\end{equation}
For the matrix $B$, we have
\begin{equation} \label{eq:matrixB}
	\begin{split}
		B_{11}=\tfrac{1}{2}\left( R_{1212}-R_{3434} \right),  \\
		B_{22}=\tfrac{1}{2}\left( R_{1313}-R_{4242} \right),  \\
		B_{33}=\tfrac{1}{2}\left( R_{1414}-R_{2323} \right), \\
	\end{split}
	\quad or \quad
	\begin{split}
		B_{11}=\tfrac{1}{4}\left( R_{11}+R_{22}-R_{33}-R_{44} \right), \\
		B_{22}=\tfrac{1}{4}\left( R_{11}+R_{33}-R_{44}-R_{22} \right), \\
		B_{33}=\tfrac{1}{4}\left( R_{11}+R_{44}-R_{22}-R_{33} \right), \\
	\end{split}
\end{equation}
and
\begin{equation*}
	B_{12}=\tfrac{1}{2}\left(R_{1213}+R_{3413}-R_{1242}-R_{3442}\right) =\tfrac{1}{2}\left(R_{23}-R_{14}\right),\ etc.
\end{equation*}

Now, we consider any $4$-dimensional cone
\begin{equation*}
	\mathcal{C}^4 := [0,\infty) \times \Sigma^3
\end{equation*}
equipped with the Riemannian metric
\begin{equation*}
	g_{c} = dr^2 + r^2 \bar{g},
\end{equation*}
where $(\Sigma^3, \bar{g})$ is a closed $3$-dimensional Riemannian manifold. On $(\Sigma^3,\bar{g})$, diagonalize the curvature operator $\overline{Rm}$ with respect to the local 2-frame 
\begin{equation*}
	\{ \bar{e}_2 \wedge \bar{e}_3, \bar{e}_3 \wedge \bar{e}_4, \bar{e}_4 \wedge \bar{e}_2 \}
\end{equation*}
of $\wedge^2 T\Sigma^3$, where $\{\bar{e}_2,\bar{e}_3,\bar{e}_4\}$ is a local orthonormal frame of $T\Sigma^3$.  
Suppose that, in this frame, $\overline{Rm}$ is diagonal with entries
\begin{equation*}
	\bar{R}_{2323} =: m_3, \quad 
	\bar{R}_{2424} =: m_2, \quad 
	\bar{R}_{3434} =: m_1
\end{equation*}
such that $m_1\leq m_2\leq m_3$. Then, with respect to the tangent frame $\{\bar{e}_2,\bar{e}_3,\bar{e}_4\}$, the Ricci tensor $\overline{Rc}$ of $\Sigma^3$ takes the form
\begin{equation*} \label{eq:Ricci3D}
	\overline{Rc} =
	\begin{pmatrix}
		m_1 + m_2 & 0 & 0 \\
		0 & m_1 + m_3 & 0 \\
		0 & 0 & m_2 + m_3
	\end{pmatrix}.
\end{equation*}
The scalar curvature of $\Sigma^3$ is given by 
\begin{equation*} \label{eq:Scalar3D}
	\bar{R} = 2(m_1 + m_2 + m_3).
\end{equation*}

For the cone $\mathcal{C}^4$, we choose 
\begin{equation*}
	e_1 = \partial_r, \qquad e_i = r^{-1}\bar{e}_i \ (i=2,3,4),
\end{equation*}
so that $\{e_1,e_2,e_3,e_4\}$ forms an orthonormal frame of $T\mathcal{C}$ with respect to $g_c$. Then, by \eqref{eq:coneRm}, we have 
\begin{align*}
	R_{1jkl} &= 0, \quad 1 \leq j,k,l \leq 4, \\
	R_{abcd} &= r^{-2} \left[ \bar{R}_{abcd} - \big( \bar{g}_{ac}\bar{g}_{bd} - \bar{g}_{ad}\bar{g}_{bc} \big) \right], \quad 2 \leq a,b,c,d \leq 4
\end{align*}
where $\bar{R}_{abcd}$ denotes the curvature tensor of the link $(\Sigma^{3},\bar{g})$.

Therefore, by \eqref{eq:matrixA&C}, we obtain
\begin{equation} \label{eq:4Dcone_A&C}
	\begin{aligned}
		A_{11} = C_{11} &= \tfrac{1}{2}R_{3434} = \tfrac{1}{2r^2}(m_1-1), \\
		A_{22} = C_{22} &= \tfrac{1}{2}R_{2424} = \tfrac{1}{2r^2}(m_2-1), \\
		A_{33} = C_{33} &= \tfrac{1}{2}R_{2323} = \tfrac{1}{2r^2}(m_3-1),
	\end{aligned}
\end{equation}
and $A_{ij}=C_{ij}=0  \ (i\neq j)$, e.g., 
\begin{equation*}
	A_{12} = C_{12}= \tfrac{1}{2}R_{3442} = 0, \quad \text{etc.}
\end{equation*}

Similarly, by \eqref{eq:matrixB}, we have
\begin{equation} \label{eq:4Dcone_B}
	\begin{aligned}
		B_{11} &= -\tfrac{1}{2}R_{3434} = -\tfrac{1}{2r^2}(m_1-1)=-A_{11}, \\
		B_{22} &= -\tfrac{1}{2}R_{4242} = -\tfrac{1}{2r^2}(m_2-1)=-A_{22}, \\
		B_{33} &= -\tfrac{1}{2}R_{2323} = -\tfrac{1}{2r^2}(m_3-1)=-A_{33},
	\end{aligned}
\end{equation}
and $B_{ij}=0  \ (i\neq j)$, e.g., 
\begin{equation*}
	B_{12} = -\tfrac{1}{2}R_{3442} = 0, \quad \text{etc.}
\end{equation*}

Moreover, for the Ricci tensor of $(\mathcal{C},g_c)$, we have  $R_{ij}=0 \ (i\neq j)$ and 
\begin{align*}
		R_{11}&=0, &R_{22}=r^{-2} \left( m_2 + m_3 - 2 \right), \\
		R_{33}&=r^{-2} \left( m_1 + m_3 - 2 \right),
		&R_{44}=r^{-2} \left( m_1 + m_2 - 2 \right).
\end{align*}

\smallskip
In conclusion, based on the above computations, we have
\begin{lem} \label{lem:4Dcone}
	Let $ \mathcal{C}^4 := [0,\infty) \times \Sigma^3 $ be a $4$-dimensional cone equipped with the Riemannian metric $g_{c} = dr^2 + r^2 \bar{g}_{\Sigma}$.
	Then, the following statements hold:
	\begin{itemize}
		\item[(i)] $(\mathcal{C}^4, g_c)$ has $A \geq 0$ if and only it has nonnegative curvature operator $Rm\geq 0$, if and only if $\overline{Rm} \geq Id$. 
	
		\smallskip	
		\item[(ii)] $(\mathcal{C}^4, g_c)$ has half-WPIC  if and only if it has WPIC and nonnegative Ricci curvature $Rc\geq 0$, if and only if $\overline{Rc} \geq 2 {\bar{g}}$.
	\end{itemize}
\end{lem}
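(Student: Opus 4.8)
The plan is to read off every equivalence directly from the explicit block decomposition of the curvature operator of the cone obtained above, so the argument is pure linear algebra with no PDE input. First I would record the relevant eigenvalue data: writing $m_1\le m_2\le m_3$ for the eigenvalues of the curvature operator $\overline{Rm}$ of the link $(\Sigma^3,\bar g)$, formulas \eqref{eq:4Dcone_A&C} and \eqref{eq:4Dcone_B} give, in the common basis $\{\vphi^+_i\}\cup\{\vphi^-_i\}$, the diagonal blocks
$$
A=C=\frac{1}{2r^2}\diag(m_3-1,\ m_2-1,\ m_1-1),\qquad B=-A,
$$
while the Ricci tensor of the cone is diagonal with entries $0,\ r^{-2}(m_1+m_2-2),\ r^{-2}(m_1+m_3-2),\ r^{-2}(m_2+m_3-2)$. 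Since $m_1\le m_2\le m_3$, the ordered eigenvalues of $A$ are $A_1=\tfrac{1}{2r^2}(m_1-1)\le A_2=\tfrac{1}{2r^2}(m_2-1)\le A_3=\tfrac{1}{2r^2}(m_3-1)$, hence $A_1+A_2=\tfrac{1}{2r^2}(m_1+m_2-2)$, and the smallest Ricci eigenvalue of the cone is $\min\{0,\ r^{-2}(m_1+m_2-2)\}$; the positive factors $r^{-2}$ are irrelevant for all the sign conditions below.

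For part (i), I would first observe that $A\ge 0$ is equivalent to $A_1\ge 0$, i.e.\ to $m_1\ge 1$, which is precisely $\overline{Rm}\ge Id$. To connect this with $Rm\ge 0$ on the cone, I would use that $A$, $C=A$ and $B=-A$ are simultaneously diagonal: reordering the orthonormal basis of $\wedge^2$ as $(\vphi^+_1,\vphi^-_1),(\vphi^+_2,\vphi^-_2),(\vphi^+_3,\vphi^-_3)$ turns $Rm=\begin{pmatrix}A&B\\ B^t&C\end{pmatrix}$ into a block-diagonal operator with three $2\times 2$ blocks $A_{ii}\begin{pmatrix}1&-1\\ -1&1\end{pmatrix}$, each with eigenvalues $0$ and $2A_{ii}$. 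Therefore $Rm\ge 0$ iff every $A_{ii}\ge 0$ iff $A\ge 0$, closing the chain of equivalences.

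For part (ii), the key point is that $A=C$ on a cone, so the condition that $A$ or $C$ be weakly $2$-positive coincides with the condition that both be weakly $2$-positive; that is, half WPIC and WPIC are here the same condition, namely $A_1+A_2\ge 0$, i.e.\ $m_1+m_2\ge 2$. Since $m_1+m_2\le m_1+m_3\le m_2+m_3$, nonnegativity of the cone's Ricci tensor (whose radial eigenvalue is already $0$) is likewise equivalent to $m_1+m_2\ge 2$; and as the eigenvalues of $\overline{Rc}$ with respect to $\bar g$ are $m_1+m_2\le m_1+m_3\le m_2+m_3$, the condition $\overline{Rc}\ge 2\bar g$ is once more $m_1+m_2\ge 2$. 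Hence all the conditions in (ii) are equivalent. I do not expect a real obstacle anywhere; the one step that needs a short argument rather than a bare substitution is the reduction of the $6\times 6$ curvature operator to three $2\times 2$ blocks in part (i), and that follows at once from $B=-A=-C$ with $A$ diagonal.
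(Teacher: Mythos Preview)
Your proposal is correct and follows essentially the same approach as the paper: the lemma is stated in the Appendix immediately after the explicit diagonal forms of $A$, $B$, $C$ and the Ricci tensor of the cone are computed, and the paper simply says ``based on the above computations, we have'' the lemma, leaving the final deductions to the reader. You have spelled out exactly those deductions.

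One small remark on part (i): your reduction of $Rm$ to three $2\times 2$ blocks via the relation $B=-A=-C$ is perfectly valid, but there is an even more direct route implicit in the paper's setup. Since \eqref{eq:coneRm} gives $R_{1ijk}=0$ for all $i,j,k$, in the natural basis $\{e_1\wedge e_a,\, e_b\wedge e_c\}$ of $\wedge^2 T\mathcal{C}$ the curvature operator is already block-diagonal with a $3\times 3$ zero block (radial planes) and a $3\times 3$ block $r^{-2}(\overline{Rm}-Id)$ (tangential planes), so $Rm\ge 0\Leftrightarrow \overline{Rm}\ge Id$ is immediate without any rearrangement. Your argument gives the same conclusion and is equally elementary.
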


\bigskip

\end{document}